\date{}
\begin{document}

\author{Ayala Dente-Byron \footnote{Einstein Institute for mathematics, Hebrew University of Jerusalem, and Mathematics Department, Technion, Israel Institute of Technology.} \and Chloe Perin\footnote{Einstein Institute for mathematics, Hebrew University of Jerusalem. This research was partially supported by the ISRAEL SCIENCE FOUNDATION (grant No. 1480/16)}} 
\title{Homogeneity of torsion-free hyperbolic groups}
\maketitle

\begin{abstract} We give a complete characterization of torsion-free hyperbolic groups which are homogeneous in the sense of first-order logic, in terms of the JSJ decompositions of their free factors.
\end{abstract}

\section{Introduction}

To answer Tarski's famous question on the elementary equivalence of free groups of different ranks, Sela introduced in the early 2000's a new, geometric approach to study the first-order theory of free groups, and of hyperbolic groups in general (see \cite{Sel1} and other papers in the series). Sela's geometric approach allowed new directions of research in the field, yielding, among other, results about elementary submodels of free and hyperbolic groups \cite{Sel6,Sel7,PerinElementary}, and the homogeneity of the free group \cite{PerinSklinosHomogeneity} \cite{OuldHoucineHomogeneity}. 

A countable group $G$ is said to be homogeneous if tuples of elements which satisfy the same first-order formula are in the same orbit under $\Aut(G)$.

More precisely, let $\bar{u}=\left(u_{1},...,u_{k}\right)\in G^{k}$ be a $k$-tuple of elements of $G$. The type of $\bar{u}$ in $G$, denoted $tp^{G}\left(\bar{u}\right)$, is the set of all first-order formulae $\varphi\left(\bar{x}\right)$ in $k$ free variables $\bar{x}=\left(x_{1},...,x_{k}\right)$ in the language of groups, such that $G$ satisfies 
$\varphi\left(\bar{u}\right)$:
 
  $$tp^{G}\left(\bar{u}\right)=\left\{ \varphi\left(\bar{x}\right)|G\models\varphi\left(\bar{u}\right)\right\} $$
 
$G$ is homogeneous if whenever $\bar{u},\bar{v}$ are tuples such that $tp^{G}\left(\bar{u}\right)=tp^{G}\left(\bar{v}\right)$ there is an automorphism $f:G\to G$ such that 
$f\left(\bar{u}\right)=\bar{v}$.

In this paper we consider torsion-free hyperbolic groups, and give a full characterization of those which are homogeneous, in terms of their $JSJ$-decomposition.

Let us give an idea of the obstructions there can be to homogeneity of a torsion free hyperbolic group on a particular example. 

Suppose $G$ is a freely indecomposable torsion free hyperbolic group, and that its (maximal cyclic) JSJ decomposition consists of a single surface vertex, with associated surface of genus $3$ with a single boundary component, and a single non surface type vertex with associated vertex group $G_0$. Denote by $\langle z \rangle$ the edge group, by $S$ the surface type vertex group. Note that $S$ admits a presentation of the form 
$$\langle z, x_1, y_1, x_2, y_2, x_3, y_3 \mid z = [x_1, y_1][x_2, y_2][x_3, y_3]\rangle$$

Here is a description of a case in which the group $G$ is not homogeneous.
\begin{ex} Suppose that in $G_0$, the element $z$ is a commutator $[g, h]$. There is a retraction $G \to G_1 = G_0 *\langle x_1,y_1\mid \; \rangle$ defined by $x_2, y_2 \mapsto y_1, x_1$ and $x_3, y_3 \mapsto g, h$. We think of this as the subsurface $S_0$ corresponding to the group $\langle x_2,y_2,x_3,y_3 \rangle$ "floor-retracting" in the JSJ decomposition (see Definition \ref{SubsurfaceFloorRetractsDef}).

By a theorem of Sela (see Theorem \ref{ElementarySubgroups}), the subgroup $G_0 *\langle x_1,y_1\mid \; \rangle$ of $G$ is elementarily embedded in $G$, i.e. an element of $G_1$ has the same type in $G_1$ and in $G$. Now it is possible to show that the orbit of the commutator $[x_1, y_1]$ under $\Aut(G_1)$ grows exponentially, while its orbit under $\Aut(G)$ grows polynomially (indeed $[x_1, y_1]$ represents a simple closed curve on the surface, hence up to finite index so does any element in its orbit under $\Aut(G)$ - the set of such elements grows polynomially). These two orbits are therefore distinct - if $h$ lies in the former but not in the latter, its type in $G_1$ is the same as the type of $[x_1, y_1]$, hence their types in $G$ are the same, yet no automorphism of $G$ takes one to the other.
\end{ex}

A second type of obstruction is demonstrated by the following example.
\begin{ex} Assume now that $z$ is a product of three commutators $[g_1, h_1][g_2, h_2][g_3, h_3]$ in $G_0$. There is a retraction $G \to G_0$ defined by $x_i,y_i \mapsto g_i,h_i$. Then, again by the work of Sela, we have that $G_0$ is an elementary submodel of $G$.

Suppose moreover that $G_0$ admits a non trivial splitting over a trivial or a maximal cyclic subgroup. Then it must be that $z$ is not elliptic in this splitting, and from this we can deduce that the orbit of $z$ under $\Aut(G_0)$ is not equal to its orbit under $\Aut(G)$. By the same reasoning as in the previous example one shows that $G$ is not homogeneous. 
\end{ex}

Finally we consider a third type of obstruction
\begin{ex} Assume again that $z$ is a product of three commutators so that $G_0$ is an elementary subgroup.

Suppose now that the maximal cyclic JSJ decomposition of $G_0$ is trivial, but that $\Aut(G_0)$ is non trivial (though note that it has to be finite), and that it contains an automorphism which sends $z$ to an element $z'$ which is not conjugate to $z$.  Then $z$ and $z'$ have the same type in $G_0$, so they also have the same type in $G$, yet by canonicity of the JSJ decomposition no automorphism of $G$ sends $z$ to $z'$.
\end{ex}

The main result of the paper, Theorem \ref{MainGeneral}, states that these are essentially the only obstructions for a group to be homogeneous.

\section{JSJ and modular groups}

In this section, we give an overview of the definitions and results we will need from Bass-Serre theory and the theory of JSJ decompositions. For further reference, the reader is invited to consult \cite{SerreTrees} and \cite{GuirardelLevittUltimateJSJ} respectively. We also prove several ad hoc results about JSJ decomposiitions that will be useful in the sequel.

\subsection{Graphs of groups} \label{GOGSec}

Given a group $G$ we call $G$-tree a tree endowed with an action of $G$ without inversion of edges. We sometimes work in the relative setting where we are given a group $G$ and a finite collection ${\cal H}=(H_1, \ldots, H_m)$ of subgroups of $G$, in that case we consider $(G, {\cal H})$-trees, that is, $G$-trees in which each $H_i$ is elliptic.

Bass-Serre theory associates to each such tree a graph of groups (for a complete reference, see \cite{SerreTrees}). To fix notation, we recall their definition: a graph of groups 
$$(\Lambda, \{G_v\}_{v \in V(\Lambda)},\{G_e\}_{e \in E(\Lambda)}, \{i_e\}_{e \in E(\Lambda)})$$
 is given by
\begin{enumerate}
\item an (oriented symmetric) connected underlying graph $\Lambda$ with endpoint maps $o: E(\Lambda) \to V(\Lambda)$ and $t: E(\Lambda) \to V(\Lambda)$ and edge involution $\bar{.}: E(\Lambda) \to E(\Lambda)$;
\item for each $v \in V(\Lambda)$, a vertex group $G_v$;
\item for each edge $e \in E(\Lambda)$ with $o(e)=v$, an edge group $G_e$ together with an embedding $i_e: G_e \to G_v$, with $G_{e} = G_{\bar{e}}$.
\end{enumerate}
We often abuse notation and write simply $\Lambda$ for the graph of groups above. We denote by $T_{\Lambda}$ the $G$-tree corresponding to $\Lambda$.

The fundamental group of such a graph of groups is defined as follows: let $\Lambda_0$ be a maximal subtree of $\Lambda$. Then the fundamental group $\pi_1(\Lambda, \Lambda_0)$ of $\Lambda$ relative to $\Lambda_0$ is generated by the groups $G_v$ together with a set $\{t_e\}_{e \in E(\Lambda)}$ of letters, subjected to the relations: 
\begin{itemize}
\item $t_et_{\bar{e}} =1 $ for each $e \in E(\Lambda)$;
\item $t_e =1$ for each $e \in \Lambda_0$;
\item $t_{e} i_{\bar{e}}(g) t_{\bar{e}} =i_e(g)$ for each $e \in E(\Lambda), g \in G_e$.
\end{itemize}
It can be shown that the isomorphism type of $\pi_1(\Lambda, \Lambda_0)$ does not depend on the choice of $\Lambda_0$.

The fundamental theorem of Bass-Serre theory is that if $T$ is a $G$-tree with associated graph of groups $\Lambda$, the fundamental group of $\Lambda$ is isomorphic to $G$.

\subsection{Surface type vertices}

Let $G$ be a group, and let  ${\cal H}=(H_1, \ldots, H_m)$ be a finite collection of subgroups of $G$. Let $\Lambda$ be a graph of groups associated to a $(G, {\cal H})$-tree.

\begin{defi} \label{SurfaceTypeVertex} (surface type vertex) A vertex $v$ in $\Lambda$ is called a surface type vertex if 
\begin{itemize}
\item there exists a compact connected surface $\Sigma$, with boundary, such that the vertex group $G_v$ is the fundamental group $S$ of $\Sigma$;
\item for each edge $e$ adjacent to $v$, the injection $i_e: G_e \hookrightarrow G_v$ maps $G_e$ onto a maximal boundary subgroup of $S$;
\item any maximal boundary subgroup of $S$ contains a conjugate either of $(i)$ the image $i_e(G_e)$ of exactly one of the edge groups adjacent to $v$, or $(ii)$ one of the subgroups $H_i$.
\end{itemize}
\end{defi}

\begin{defi} (graph of groups with surfaces) \label{GOGWithSurfaces} A graph of groups with surfaces is a graph of groups $\Lambda$ with a specified subset $V_{S}$ of vertices such that any vertex $v$ in $V_S$ is of surface type. By a slight abuse of language (since some vertices that are not in $V_S$ may satisfy the conditions of Definition \ref{SurfaceTypeVertex}) we call the vertices in $V_S$ the surface type vertices of $\Lambda$. 

A vertex $v$ of the tree $T_{\Lambda}$ corresponding to $\Lambda$ whose projection to $\Lambda$ is of surface type is also said to be of surface type. The surfaces corresponding to surface type vertices of $\Lambda$ are called the surfaces of $\Lambda$.
\end{defi}

We have the following useful property
\begin{rmk} \label{1Acylindricity} Let $T$ be a tree endowed with an action of a group $G$ corresponding to a graph of group with surfaces $\Lambda$, and let $v$ be a surface type vertex of $T$. Then the action is $1$-acylindrical at $v$, that is, if an element $g \in G$ stabilizes two distinct edges adjacent to $v$, it must be trivial. This is a straightforward consequence of the malnormality of maximal boundary subgroups in surface groups.
\end{rmk}

We need to slightly generalize this to the notion of a graph of groups with sockets. Essentially, a socket type vertex has vertex group a surface groups amalgamated with cyclic groups along its boundaries, adding roots to maximal boundary elements. We thus give
\begin{defi} (socket type vertices) \label{SocketVertexDef} A vertex $v$ in $\Lambda$ is called a socket type vertex if there exists a compact connected surface with boundary $\Sigma$, with fundamental group $S$ and $B_1, \ldots, B_k$ representatives of the conjugacy classes of maximal boundary subgroups of $S$ such that
\begin{itemize}
\item  the vertex group $G_v$ is the amalgamated product 
of $S$ of $\Sigma$ with cyclic groups $\hat{B_1} ,...,\hat{B_k}$ along  $B_1,...,B_k$ of $S$;
\item for each edge $e$ adjacent to $v$, the injection $i_e: G_e \hookrightarrow G_v$ maps $G_e$ onto one of the groups $\hat{B_i}$;
\item any one of the groups $\{\hat{B}_1,...,\hat{B}_k\}$ has a conjugate which contains either the image $i_e(G_e)$ of one of the edge groups adjacent to $v$, or one of the subgroups $H_i$.
\end{itemize}
\end{defi}

\begin{defi} (graph of groups with sockets) \label{GOGWithSockets} A graph of groups with sockets is a graph of groups $\Lambda$ with a specified subset $V_{S}$ of vertices such that any vertex $v$ in $V_S$ is a socket type vertex.

The vertices of $V_S$ as well as the corresponding vertices $v$ of the tree $T_{\Lambda}$ are called sockets. The surfaces of $\Lambda$ are the surfaces corresponding to its sockets.

The graph of groups with surfaces associated to $\Lambda$ is the graph of group obtained by refining $\Lambda$ at each socket type vertex by the graph of groups corresponding to the decomposition given in the first item of Definition \ref{SocketVertexDef} above. 
\end{defi}

\subsection{Modular group}

\begin{defi} A cyclic one-edge splitting of a group $G$ is a graph of groups $\Lambda$ with a single edge $e$ such that $\pi_1(\Lambda)\simeq G$ (note that the spanning tree is unique in this case) and $G_e$ is a cyclic group. 
\end{defi}

If $\Lambda$ is a cyclic one-edge splitting of $G$ then $G$ splits either as an amalgamated product: $G\simeq A\underset{C}{*}B$ or as an HNN-extention: $G\simeq A\underset{C }{*}$, where $C$ is cyclic. We define Dehn twists for both cases:

\begin{defi} Suppose $G\simeq A\underset{C}{*}B$ is an amalgamated product where $C$ is cyclic. The Dehn twist by an element $\gamma$ in the centralizer of $C$ in $A$ is the automorphism defined as follows:
\begin{itemize}
\item $\tau_{\gamma}(a)=a$ for every $a\in A$
\item$\tau_{\gamma}(b)=\gamma b \gamma^{-1}$ for every $b\in B$
\end{itemize}

Suppose $G\simeq A\underset{C}{*}$ is an HNN-extention of $A$  with $C = \langle c \rangle$ cyclic, that is, $G = \langle A, t \mid i_1(c)= ti_2(c)t^{-1} \rangle$ where $i_1, i_2$ are two embeddings of $C$ in $A$. The Dehn twist by an element $\gamma$ of the centralizer of $i_2(C)$ in $A$ is the automorphism defined as follows:
\begin{itemize}
\item $\tau_{\gamma}(a)=a$ for every $a\in A$
\item$\tau_{\gamma}(t)=t\gamma$. 
\end{itemize}
\end{defi}

\begin{defi} Let $G$ be a freely indecomposable group. The modular group of $G$, $Mod(G)$ is the subgroup of $\Aut(G)$ generated by Dehn twists associated to cyclic one edge splittings of $G$.
\end{defi}

\begin{rmk}
If $G$ is torsion-free hyperbolic, then in any one-edge splitting of $G$ over an infinite cyclic group, the edge group $C$ is maximal cyclic in one of the vertex groups. We will see in Section \ref{MaxCyclicJSJ} (paragraph on "Maximal splittings and JSJ decomposition") that $G$ also has a one edge splitting over the centralizer of $C$, which is a maximal cyclic (hence also maximal abelian) subgroup of $G$. Hence, $Mod(G)$ is generated by Dehn twists associated to maximal cyclic one edge splittings of $G$. 
\end{rmk}

The following is a classical result of Rips and Sela \cite[Corollary 4.4]{RipsSelaHypI}
\begin{thm} \label{ModFIInAut} Let $G$ be a torsion-free freely indecomposable hyperbolic group. Then $\Mod(G)$ has finite index in $\Aut(G)$.
\end{thm}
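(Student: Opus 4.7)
The plan is to use the canonical cyclic JSJ decomposition $\Delta$ of $G$. For a torsion-free freely indecomposable hyperbolic group, such a canonical JSJ exists (Bowditch, Rips--Sela), and its non-cyclic vertex groups are either \emph{surface-type} or \emph{rigid}, the latter meaning that they admit no nontrivial cyclic splitting relative to their incident edge groups. The strategy is to bound $[\Aut(G):\Mod(G)]$ by analyzing how automorphisms of $G$ interact with $\Delta$.

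First I would invoke canonicity: every automorphism of $G$ permutes the vertex and edge groups of $\Delta$ up to conjugation, yielding a homomorphism $\rho:\Aut(G)\to\Aut(\Delta)$ whose image lies in the (finite) automorphism group of the underlying finite graph. Let $K=\ker\rho$; since $[\Aut(G):K]$ is finite, it suffices to show that $\Mod(G)$ has finite index in $K$.

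Next I would analyze $K$ itself. Up to composition with Dehn twists along the edges of $\Delta$---which lie in $\Mod(G)$ by construction---any element of $K$ is essentially determined by the collection of outer automorphisms it induces on the vertex groups, each of which preserves the conjugacy class of every incident edge group. For \emph{rigid} vertices, this relative outer automorphism group is \emph{finite}: this is the crucial rigidity input, proved by Rips--Sela via the shortening argument applied to limits in the space of actions on $\R$-trees. For \emph{surface-type} vertices, the relevant group is the mapping class group of the associated surface fixing its boundary components, which by the classical Dehn--Lickorish theorem is generated by Dehn twists along essential simple closed curves in the surface. Each such curve yields, by refining $\Delta$ at that vertex, a cyclic one-edge splitting of $G$, and the corresponding surface Dehn twist extends to a Dehn twist of $G$ lying in $\Mod(G)$. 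Together, these observations show that $K$ is covered up to finite index by $\Mod(G)$, which combined with the finiteness of $[\Aut(G):K]$ yields the theorem.

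The main obstacle is the rigidity statement for non-surface vertex groups: their outer automorphism groups relative to incident edge groups must be shown to be finite. This is by far the deepest ingredient and relies on Rips' theory of group actions on real trees together with Sela's shortening argument: given a hypothetically infinite sequence of pairwise non-conjugate relative automorphisms, one precomposes them with shortening modular automorphisms and passes to a limiting nontrivial action on an $\R$-tree, whose analysis via the Rips machine contradicts the assumed rigidity. The other ingredients---existence and canonicity of the cyclic JSJ for torsion-free freely indecomposable hyperbolic groups, and the Dehn--Lickorish generation of surface mapping class groups by Dehn twists along simple closed curves---are by now standard.
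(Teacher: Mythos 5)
The paper does not give its own proof of Theorem \ref{ModFIInAut}: it cites it as \cite[Corollary~4.4]{RipsSelaHypI}. Your proposal is therefore being compared against the classical Rips--Sela argument. Their original proof is a \emph{direct} shortening argument applied to $\Aut(G)$ itself: assume for contradiction that there are infinitely many automorphisms pairwise inequivalent modulo $\Mod(G)$ and conjugation, shorten each by precomposing with modular automorphisms, pass via Bestvina--Paulin to a nontrivial stable limit action of $G$ on an $\R$-tree, and apply the Rips machine to produce a cyclic splitting which yields a contradiction with the assumed minimality. No JSJ decomposition appears; indeed the cyclic JSJ for hyperbolic groups (Bowditch, Rips--Sela~1997, Guirardel--Levitt) postdates this 1994 result and in places relies on it.

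Your route is a genuinely different, more structural, JSJ-based argument in the style later systematized by Guirardel and Levitt: pass to the kernel $K$ of the finite permutation action of $\Aut(G)$ on the JSJ graph, factor $K$ through $\prod_v \Out(G_v; \text{incident edge groups})$, use finiteness for rigid vertices, Dehn--Lickorish plus refinement for surface vertices, and absorb the kernel of the restriction map into Dehn twists and inner automorphisms. This is correct as a sketch, and it buys a cleaner picture of where the finiteness comes from. However, be aware of two points. First, the step that the kernel of $K \to \prod_v \Out(G_v)$ is generated by edge Dehn twists and inner automorphisms is not automatic; it needs the acylindricity of the tree of cylinders (otherwise twist groups along edges do not generate the full kernel), so you should cite the relevant Levitt/Guirardel--Levitt statement rather than treat it as folklore. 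Second, the ``crucial rigidity input'' for rigid vertices is not an independent elementary fact: it is itself a shortening-argument result of essentially the same depth and provenance as the theorem you are proving (it is the special case where $\Mod$ is trivial). So your proof reorganizes, but does not circumvent, the deep limiting-tree machinery; it is not more elementary than Rips--Sela's, and one must take care to quote a version of the rigid-vertex finiteness that does not presuppose the theorem for $G$ itself.
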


\subsection{JSJ decomposition} \label{JSJSec}

If $G$ is a torsion free hyperbolic group, a JSJ decomposition gives a way to encode all cyclic splittings of $G$, and hence to understand the modular group. Originally defined by Rips and Sela \cite{RipsSelaJSJ} it was further developed by Bowditch \cite{BowditchJSJ}, Fujiwara and Papasoglu \cite{FujiwaraPapasoglu}, Dunwoody and Sageev \cite{DunwoodySageev}. 

A more recent and encompassing reference on JSJ decomposition is the work of Guirardel and Levitt, which provides a general abstract framework for defining the JSJ decomposition of a group over a class of subgroups (see \cite{GuirardelLevittUltimateJSJ}). We give a summary of this framework, adapted to our purposes from the summary given in Section 4.1 of \cite{PerinSklinosForking}.

Let $G$ be a torsion free freely indecomposable finitely generated group. A {\em$G$-tree} is said to be {\em minimal} if it admits no proper $G$-invariant subtree.  
A {\em cyclic $G$-tree} is a $G$-tree whose edge stabilizers are infinite cyclic. 
If ${\cal H}= \{H_1, \ldots, H_m\}$ is a finite collection of subgroups of $G$, a {\em $(G,{\cal H})$-tree} is a $G$-tree in which each $H_i$ fixes a vertex. 
Following \cite{GuirardelLevittUltimateJSJ}, we call a (not necessarily simplicial) surjective equivariant 
map $d: T_1 \to T_2$ between two $(G,{\cal H})$-trees a {\em domination map}. We then say that $T_1$ dominates $T_2$. It is possible to show that $T_1$ dominates $T_2$ if any subgroup of $G$ which fixes a vertex in $T_1$ also fixes a vertex in $T_2$.
A surjective simplicial map $p:T_1 \to T_2$ which consists in collapsing some orbits of edges to points 
is called a {\em collapse map}. In this case, we also say that $T_1$ {\em refines} $T_2$.

\paragraph{Deformation space.} The {\em deformation space} of a cyclic $(G,{\cal H})$-tree $T$ is the set of all cyclic $(G,{\cal H})$-trees $T'$ such that $T$ dominates $T'$ and $T'$ dominates $T$. A cyclic $(G,{\cal H})$-tree is {\em universally elliptic} if its edge stabilizers are elliptic in every cyclic $(G,{\cal H})$-tree. If $T$ is a universally elliptic cyclic $(G,{\cal H})$-tree, and $T'$ is any cyclic $(G,{\cal H})$-tree, 
it is easy to see that there is a tree $\hat{T}$ which refines $T$ and dominates $T'$ (see \cite[Lemma 2.8]{GuirardelLevittUltimateJSJ}).

\paragraph{JSJ trees.} A cyclic relative {\em JSJ tree} for $G$ with respect to ${\cal H}$ is a universally elliptic cyclic $(G,{\cal H})$-tree which dominates 
any other universally elliptic cyclic $(G,{\cal H})$-tree. All these JSJ trees belong to a same deformation space, 
that we denote ${\cal D}_{JSJ}$. Guirardel and Levitt show that if $G$ is finitely presented and the $H_i$ are finitely generated, 
the JSJ deformation space always exists (see \cite[Corollary 2.21]{GuirardelLevittUltimateJSJ}). It is easily seen to be unique.

\paragraph{The tree of cylinders.} \label {TreeOfCylinders} In \cite[Section 7]{GuirardelLevittUltimateJSJ}, {\em cylinders} in cyclic $G$-trees are defined as equivalence 
classes of edges under the equivalence relation given by commensurability of stabilizers, 
and to any $G$-tree $T$ is associated its {\em tree of cylinders}. 
It can be obtained from $T$ as follows: the vertex set is the union 
$V_0(T_c) \cup V_1(T_c)$ where $V_0(T_c)$ contains a 
vertex $w'$ for each vertex $w$ of $T$ contained in at least two distinct cylinders, and $V_1(T_c)$ contains a vertex 
$v_c$ for each cylinder $c$ of $T$. There is an edge between vertices $w'$ and $v_c$ lying in $V_0(T_c)$ and $V_1(T_c)$ respectively 
if and only if $w$ belongs to the cylinder $c$.  

We get a tree which is bipartite: every edge in the tree of cylinders joins a vertex from $V_0(T_c)$
to a vertex of $V_1(T_c)$. Since the action of $G$ on $T$ sends cylinders to cylinders, the tree of cylinder admits an obvious $G$ action. 
Note also that if $H$ stabilizes an edge $e$ of $T$, its centralizer $C(H)$ preserves the cylinder containing $e$ since 
the translates of $e$ by elements of $C(H)$ are also stabilized by $H$: in particular there is a vertex in $T_c$ whose stabilizer is $C(H)$. 
It is moreover easy to see that this vertex is unique.

It turns out that the tree of cylinders is in fact an invariant of the deformation space 
\cite[Lemma 7.3]{GuirardelLevittUltimateJSJ}.

\paragraph{Case of freely indecomposable torsion-free hyperbolic groups.} By \cite[Theorem 9.18]{GuirardelLevittUltimateJSJ}, 
if $G$ is a torsion-free hyperbolic group freely indecomposable with respect to a finitely generated subgroup $A$, 
the tree of cylinders $T_c$ of the cyclic JSJ deformation space of 
$G$ with respect to $A$ is itself a cyclic JSJ tree, and it is moreover strongly $2$-acylindrical: namely, 
if a non-trivial element stabilizes two distinct edges, they are adjacent to a common cyclically stabilized vertex. 

Moreover, in this case the tree of cylinders is not only universally elliptic, but in fact universally compatible: namely, 
given any cyclic $(G,A)$-tree $T$, there is a refinement $\hat{T}$ of $T_c$ which \emph{collapses} onto 
$T$ \cite[Theorem 11.4]{GuirardelLevittUltimateJSJ}.

The JSJ deformation space being unique, it must be preserved under the action of $\Aut_A(G)$ on (isomorphism classes of) $(G,A)$-trees defined by twisting of the $G$-actions. 
Thus the tree of cylinders is a fixed point of this action, that is, 
for any automorphism $\phi \in \Aut_A(G)$, there is an automorphism 
$f:T_c \to T_c$ such that for any $x \in T_c$ and $g \in G$ we have $f(g \cdot x) = \phi(g) \cdot f(x)$.

\paragraph{Rigid and flexible vertices.} A vertex stabilizer in a (relative) JSJ tree is said to be {\em rigid} 
if it is elliptic in any cyclic $(G,{\cal H})$-tree, and {\em flexible} if not. In the case where $G$ is a torsion-free hyperbolic group and the $H_i$ are finitely generated subgroups of $G$ with respect to which $G$ is freely indecomposable, 
the flexible vertices of a the tree of cylinders of the cyclic JSJ deformation space of $G$ with respect to ${\cal H}$ are 
{\em surface type} vertices \cite[Theorem 6.6]{GuirardelLevittUltimateJSJ} in the sense of Definition \ref{SurfaceTypeVertex}. 
Moreover, the corresponding surfaces contain an essential simple closed geodesic so they cannot be thrice punctured spheres. 

\paragraph{Maximal splittings and JSJ-decomposition.} \label{MaxCyclicJSJ} Assume $G$ is a freely indecomposable and torsion free hyperbolic group.
We follow Section 9.5 in \cite{GuirardelLevittUltimateJSJ} to introduce a unique "maximal" tree in which edge stabilizers are maximal cyclic subgroups. This tree is obtained from the tree of cylinders $T_c$ of the cyclic JSJ deformation space in two steps. 

\begin{enumerate}
\item \textbf{Refine $T_c$ in special surface type vertices:} for every surface-type vertex $v$  with surface group a Klein bottle with one boundary component, $G_v$ is a free group $\left <s,t\right >$ with boundary element $tst^{-1}s$. We refine $T_c$ at $v$ by replacing $v$ with a graph with two vertices $v_1$  with vertex group generated by $\langle s, tst^{-1} \rangle$ and $v_2$ with vertex group generated by $s$, and two edges joining $v_1$ to $v_2$, with edge groups embedding in $G_{v_1}$ as $\langle s \rangle$ and $\langle tst^{-1}\rangle$ respectively and in $G_{v_2}$ as $\langle s \rangle$. 

\item \textbf{Replace the tree $T$ thus obtained by a tree in which all edge stabilizers are maximal cyclic in $G$:} for a cyclic subgroup $H$ of $G$, denote by $\hat {H}$ the maximal cyclic group containing it. Note that since $H$ is of finite index in $\hat{H}$, and $H$ is elliptic in $T$, then so is $\hat{H}$. Consider an edge $e$ such that $G_e$ is not maximal cyclic, so $e$ comes from an edge in $T_c$ and $\hat{G_e}$ fixes the adjacent vertex $v$ corresponding to a cylinder.

Identify $e$ with all the edges $g\cdot e$ for $g\in \hat {G_e}$. This is done simultaneously for all edges in the $G$-orbit of $e$. 
\end{enumerate}
Denote by $\hat{T}$ the tree obtained in this way from $T_c$. The tree $\hat{T}$  is a JSJ-tree for $G$ with respect to splittings of $G$ over maximal cyclic groups: it is universally elliptic with respect to such splittings and dominates every other universally elliptic maximal-cyclic $G$-tree (Proposition 9.30 in \cite{GuirardelLevittUltimateJSJ}). Since $T_c$ is canonical, so is $\hat{T}$. In particular:

\begin{rmk} \label{UniquenessMaxCyclicJSJ} 
If $\phi:H\to K$ is an isomorphism and $T$ is a $JSJ$-tree for $K$ then it is also a $JSJ$-tree for $H$, acting on $T$ via $\phi$. So if $T_K$ is the maximal cyclic $JSJ$ tree for $K$ obtained from a $JSJ$-tree $T$, its uniqueness implies the existence an isomorphism $f:T_H\to T_K$ respecting the $H$-action: for all $x\in T_H$ and $h\in H$, $f(h\cdot x)=\phi(h)\cdot f(x)$. 
\end{rmk} 

At the level of graphs of groups, the construction of the maximal cyclic decomposition $\hat{\Lambda}$ of $G$ from the JSJ decomposition $\Lambda$ corresponding to the tree of cylinders $T_c$ can be described as follows: for each vertex $v$ with non cyclic vertex group $H$ we denote by $\hat{H}$ the amalgamated product $H*_{E_1}\hat{E}_1* \ldots *_{E_l}\hat{E}_l$ where $E_1, \ldots, E_l$ are the images in $H$ of the edge groups of the edges $e_1, \ldots, e_l$ adjacent to $v$, and $\hat{E}_j$ is a copy of the (necessarily cyclic) vertex group corresponding to the other endpoint of $e_j$. 
	
Then $\hat{\Lambda}$ is obtained from $\Lambda$ by 1. refining surface type vertices corresponding to once punctured Klein bottles as described above to get a graph of groups $\tilde{\Lambda}$, and 2. replacing in $\tilde{\Lambda}$ each vertex group $H$ by $\hat{H}$ and each adjacent edge group $E_j$ by $\hat{E}_j$. Note that $\tilde{\Lambda}$ and $\Lambda'$ have the same underlying graph.

Note that flexible vertices of the maximal cyclic JSJ decomposition are socket type vertices.

\paragraph{Absolute vs. relative JSJ decomposition. } The following is a straightforward although useful remark.

\begin{rmk} \label{JSJVsRelJSJ} Let $G$ be a torsion-free hyperbolic group which is freely indecomposable with respect to a finite set of cyclic subgroups $(H_1, \ldots, H_k)$.
	
Suppose that the $H_i$ are elliptic in all the splittings of $G$ over trivial or maximal cyclic groups. Then $G$ is in fact freely indecomposable, since any free splitting $G = A* B$ of $G$ is a splitting relative to $(H_1, \ldots, H_r)$.

Moreover, the set of maximal cyclic $(G, {\cal H})$-trees is the same as the set of maximal cyclic $G$-trees. Hence the trees of cylinders are the same, and the maximal cyclic JSJ decomposition of $G$ is the same as
its maximal cyclic JSJ decomposition relative to $(H_1, \ldots, H_k)$.
\end{rmk}

\subsection{Modular group orbits and JSJ}

We will need the following result
\begin{lemma} \label{OrbitEdgeElement} Let $G$ be a freely indecomposable torsion free hyperbolic group, and denote by $\Lambda$ its maximal cyclic JSJ decomposition. If $g \in G$ stabilizes an edge of $T_{\Lambda}$, then the orbit of $g$ under $\Aut(G)$ consists of finitely many conjugacy classes.
\end{lemma}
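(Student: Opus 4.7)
The plan is to exploit the canonicity of the maximal cyclic JSJ decomposition to show that $\Aut(G)$ permutes the $G$-orbits of edges of $T_\Lambda$, and then use the (maximal) cyclicity of edge stabilizers to control the conjugacy class of $\phi(g)$ for $\phi \in \Aut(G)$.

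First I would invoke the canonicity of $\hat T = T_\Lambda$ (as discussed in the paragraph on maximal splittings and in Remark \ref{UniquenessMaxCyclicJSJ}, applied with $H = K = G$): for every $\phi \in \Aut(G)$ there exists a $\phi$-equivariant automorphism $f_\phi : T_\Lambda \to T_\Lambda$, meaning $f_\phi(h \cdot x) = \phi(h) \cdot f_\phi(x)$ for all $h \in G$ and $x \in T_\Lambda$. Since the underlying quotient graph $\Lambda$ is finite, fix once and for all representatives $e_1, \ldots, e_k$ of the finitely many $G$-orbits of edges of $T_\Lambda$, together with generators $c_j$ of the (maximal) infinite cyclic edge groups $G_{e_j}$.

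Now suppose $g \in G$ stabilizes an edge $e$ of $T_\Lambda$. Writing $e = g_0 \cdot e_{j_0}$ for some $g_0 \in G$ and some index $j_0$, we have $g = g_0\, c_{j_0}^n\, g_0^{-1}$ for some $n \in \Z$. For any $\phi \in \Aut(G)$, the equivariance of $f_\phi$ gives $\phi(g) \cdot f_\phi(e) = f_\phi(g \cdot e) = f_\phi(e)$, so $\phi(g)$ stabilizes the edge $f_\phi(e)$. Moreover, $\phi$ restricts to a group isomorphism $G_e \to G_{f_\phi(e)}$: the inclusion $\phi(G_e) \subseteq G_{f_\phi(e)}$ follows from equivariance, and applying the same reasoning to $\phi^{-1}$ and $f_\phi^{-1}$ gives the reverse inclusion. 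Write $f_\phi(e) = g_1 \cdot e_{j_\phi}$ for some $g_1 \in G$ and some $j_\phi \in \{1,\ldots,k\}$; since $G_e = \langle g_0 c_{j_0} g_0^{-1}\rangle$ and $G_{f_\phi(e)} = \langle g_1 c_{j_\phi} g_1^{-1}\rangle$ are both infinite cyclic, the only isomorphisms between them send a generator to a generator, so
$$\phi(g_0 c_{j_0} g_0^{-1}) \;=\; g_1 c_{j_\phi}^{\varepsilon} g_1^{-1}$$
for some $\varepsilon \in \{\pm 1\}$. Raising to the $n$-th power gives $\phi(g) = g_1 c_{j_\phi}^{\varepsilon n} g_1^{-1}$, which is conjugate in $G$ to $c_{j_\phi}^{\varepsilon n}$.

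Therefore, as $\phi$ ranges over $\Aut(G)$, the element $\phi(g)$ falls into at most $2k$ conjugacy classes, namely those of the elements $c_j^{\pm n}$ for $1 \le j \le k$, proving the lemma. The only real point to be careful about is the passage from the canonicity statement to the existence of the $\phi$-equivariant automorphism $f_\phi$ of $T_\Lambda$ and the resulting restriction of $\phi$ to an isomorphism between edge stabilizers; once this is in place, the cyclicity of edge groups forces the exponent $n$ to be preserved up to sign, and the finiteness of the number of $G$-orbits of edges does the rest.
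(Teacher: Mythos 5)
Your proof is correct, and it takes a genuinely different route from the paper. The paper's proof is very short: it observes that since $g$ stabilizes an edge of the maximal cyclic JSJ tree, it is universally elliptic, hence elliptic in every one-edge maximal cyclic splitting of $G$; the Dehn twists associated to such splittings therefore send $g$ to a conjugate, so the entire $\Mod(G)$-orbit of $g$ is contained in its conjugacy class, and the conclusion follows because $\Mod(G)$ has finite index in $\Aut(G)$ (Theorem~\ref{ModFIInAut}, due to Rips--Sela). Your argument instead works directly with the canonicity of the maximal cyclic JSJ tree: every $\phi\in\Aut(G)$ induces a $\phi$-equivariant automorphism $f_\phi$ of $T_\Lambda$, which must permute the finitely many $G$-orbits of edges, and because edge stabilizers are infinite cyclic with marked generators this forces $\phi(g)$ to be conjugate to $c_{j_\phi}^{\pm n}$ for one of the finitely many edge representatives $e_{j_\phi}$. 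The two proofs rely on disjoint deep inputs --- the paper on the finite-index theorem for the modular group, you on the $\Aut(G)$-invariance of the JSJ deformation space and tree of cylinders from Guirardel--Levitt --- and your version has the small bonus of yielding an explicit bound (at most $2k$ conjugacy classes, where $k$ is the number of oriented edge orbits) together with an explicit description of those classes as $\{[c_j^{\pm n}]\}$, whereas the paper gives only abstract finiteness. One point worth making explicit in your write-up is that the integer $n$ (the exponent of $g$ in $G_e$) does not depend on the choice of $g_0$ with $e=g_0\cdot e_{j_0}$, since any two such choices differ by an element of $G_{e_{j_0}}=\langle c_{j_0}\rangle$, which centralizes $c_{j_0}$ and therefore leaves the marked generator $g_0 c_{j_0}g_0^{-1}$ of $G_e$ unchanged; but this is routine and does not affect the validity of the argument.
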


\begin{proof} If $g$ stabilizes an edge of $T_{\Lambda}$ then it is elliptic in any one edge maximal cyclic splitting of $G$, hence it is sent to a conjugate by the Dehn twists associated to such a splitting. Thus its orbit under $\Mod(G)$ is contained it its conjugacy class. But $\Mod(G)$ has finite index in $\Aut(G)$ thus the orbit of $g$ under $\Aut(G)$ consists of finitely many conjugacy classes.
\end{proof}

We now want to show that the fundamental group of a subgraph of group of the JSJ admits precisely this subgraph as a JSJ decomposition relative to the adjacent edge groups. 

\begin{defi} (subgraph of groups) If $(\Lambda, \{G_v\}_{v \in V(\Lambda)},\{G_e\}_{e \in E(\Lambda)}, \{i_e\}_{e \in E(\Lambda)})$ is a graph of groups, and $\Lambda_1$ is a subgraph of $\Lambda$, we say that $(\Lambda_1, \{G_v\}_{v \in V(\Lambda_1)},\{G_e\}_{e \in E(\Lambda_1)}, \{i_e\}_{e \in E(\Lambda_1)})$ is a subgraph of groups of $\Lambda$ (i.e, we take the full vertex and edge groups of all the vertices and edges of the subgraph).
\end{defi}

Note that if $\Lambda_1$ is connected, then by general Bass-Serre theory, the embedding of $\Lambda_1$ in $\Lambda$ induces an embedding $\pi_1(\Lambda_1) \to \pi_1(\Lambda)$ which is defined up to conjugation by an element of $\pi_1(\Lambda)$.

We say that an edge $e$ of a graph of groups $\Lambda$ is {\em trivial} if one of its endpoints $v$ has valence $1$ in $\Lambda$ and $i_e(G_e)=G_v$.

\begin{prop} \label{RelativeJSJSubgraphOfGroups} Let $G$ be a freely indecomposable torsion free hyperbolic group, let $\Lambda$ be the decomposition coming from the action of $G$ on the tree of cylinders $T_{\Lambda}$ of the cyclic JSJ deformation space of $G$.
	
If $\Lambda_0$ is a connected subgraph of groups of $\Lambda$ with no trivial edges, and $G_0$ the corresponding subgroup of $G$, then $\Lambda_0$ is the JSJ decomposition coming from the tree of cylinders of the deformation space of $G_0$ relative to the collection $H^1_0, \ldots, H^r_0$ of edge groups of $\Lambda$ adjacent to $\Lambda_0$.
\end{prop}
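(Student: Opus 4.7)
The plan is to identify $T_0$, the Bass-Serre tree of $\Lambda_0$, as the tree of cylinders of the cyclic JSJ deformation space of $G_0$ relative to ${\cal H}_0 = (H_0^1, \ldots, H_0^r)$. Observe that $T_0$ sits $G_0$-equivariantly as a subtree of $T_\Lambda$: fix a lift $\hat T_0 \subseteq T_\Lambda$ of $\Lambda_0$, and set $T_0 = G_0 \cdot \hat T_0$. Each $H_0^i$, being (conjugate to) the edge group of an edge of $\Lambda$ with an endpoint $v \in \Lambda_0$, embeds into the vertex group $G_v$ of $\Lambda_0$; hence $H_0^i$ is elliptic in $T_0$, and $T_0$ is a cyclic $(G_0, {\cal H}_0)$-tree.

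The main tool is an \emph{extension construction}. Given any cyclic $(G_0, {\cal H}_0)$-tree $T'$, with corresponding graph of groups $\Lambda'$ of $G_0$, build a cyclic $G$-tree $\tilde T'$ as follows. Each $H_0^i$, being elliptic in $T'$, is conjugate in $G_0$ into some vertex group of $\Lambda'$. Replace $\Lambda_0$ inside $\Lambda$ by $\Lambda'$, and reattach each edge of $\Lambda \setminus \Lambda_0$ adjacent to $\Lambda_0$ to the appropriate vertex of $\Lambda'$ via the conjugation. The resulting graph of groups $\tilde\Lambda'$ has fundamental group $G$ and only cyclic edges, and a standard Bass-Serre argument shows that $T'$ embeds $G_0$-equivariantly as a subtree of the Bass-Serre tree $\tilde T'$. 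The $G_0$-equivariant nearest-point projection $\pi: \tilde T' \to T'$ then transfers ellipticity of subgroups of $G_0$ from $\tilde T'$ to $T'$.

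The two defining properties of a JSJ tree follow directly. For universal ellipticity: if $C$ is an edge stabilizer of $T_0$, then $C$ is also an edge stabilizer of $T_\Lambda$, hence elliptic in any cyclic $G$-tree, in particular in $\tilde T'$; projecting via $\pi$ gives $C$ elliptic in $T'$. For domination: let $T''$ be a universally elliptic cyclic $(G_0, {\cal H}_0)$-tree with extension $\tilde T''$. The claim is that $\tilde T''$ is universally elliptic in cyclic $G$-trees. Its edges come either from $\Lambda \setminus \Lambda_0$ (universally elliptic as edges of $T_\Lambda$) or from $T''$. For the latter, any cyclic $G$-tree $T^*$ becomes a cyclic $(G_0, {\cal H}_0)$-tree on restriction to $G_0$ (the $H_0^i$, as edge groups of $T_\Lambda$, are elliptic in $T^*$), so universal ellipticity of $T''$ makes these edges elliptic in $T^*$ too. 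Then $T_\Lambda$ dominates $\tilde T''$, so each vertex group of $\Lambda_0$ is elliptic in $\tilde T''$ and hence, via $\pi$, in $T''$; so $T_0$ dominates $T''$.

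It remains to identify $T_0$ as the tree of cylinders of its deformation space, which is where I expect the main technical difficulty. The strategy is to use that $T_\Lambda$ is strongly $2$-acylindrical and bipartite in the tree-of-cylinders form, alternating between cyclically-stabilized cylinder vertices and non-cyclic rigid or flexible vertices: the subtree $T_0$ inherits both properties, commensurability of cyclic subgroups in $G_0$ coincides with commensurability in $G$, and the tree-of-cylinders construction applied to $T_0$ therefore reproduces $T_0$. The delicate point is to check, using the no-trivial-edges hypothesis together with free indecomposability of $G$, that every non-cyclic vertex of $T_0$ is adjacent in $T_0$ to edges in at least two distinct $G_0$-commensurability classes, so that no such vertex is collapsed by the tree-of-cylinders construction.
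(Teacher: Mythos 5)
Your argument for the first half (that $T_0$ is a JSJ tree for $G_0$ relative to ${\cal H}_0$) is correct and takes a genuinely different route from the paper. You verify universal ellipticity and domination by hand, via the extension construction $T' \mapsto \tilde T'$ and the $G_0$-equivariant nearest-point projection; the paper instead collapses $T_{\Lambda_0}$ to a point in $T_\Lambda$ and invokes Guirardel--Levitt's Proposition 4.15 (reproduced here as Proposition~\ref{JSJFromUniversallyEll}), which says that refining a universally elliptic tree at a vertex by a relative JSJ tree for that vertex group produces a JSJ tree for the whole group. Your approach is more elementary and self-contained; the paper's is shorter because it offloads the deformation-space bookkeeping onto a quoted result. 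Both are sound. A small point worth spelling out in your version: you need that edge stabilizers of $T_0$ in $G$ already lie in $G_0$ (so that the cyclic structure and the commensurability relation for $T_0$ as a $G_0$-tree agree with those induced from $T_\Lambda$); this holds because $G_0$ is the stabilizer of the component $T_{\Lambda_0}$ of $F(\Lambda_0)$, hence contains all vertex and edge stabilizers of that subtree.

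The tree-of-cylinders half, which you correctly flag as the crux, is where the gap lies. Your stated ``delicate point'' --- that every non-cyclic vertex of $T_0$ meets edges in at least two $G_0$-commensurability classes --- is in fact automatic, not delicate: if $v$ is non-cyclic and $e$ is an edge at $v$ with cyclic stabilizer $G_e$, then for $g\in G_v$ the edges $e$ and $g\cdot e$ lie in the same cylinder iff $g$ lies in the commensurator of $G_e$, which in a torsion-free hyperbolic group is the maximal cyclic overgroup; since $G_v$ is non-cyclic it cannot sit inside that, so distinct cosets give distinct cylinders. The no-trivial-edges hypothesis is not needed for this, and free indecomposability of $G$ plays no role here. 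What the no-trivial-edges hypothesis \emph{is} for, and what your sketch does not address, are the two conditions the paper actually checks: \emph{(i)} minimality of $T_0$ --- if $T_0$ had a proper $G_0$-invariant subtree, the trimmed pieces would be trees with all vertex and edge groups inside a single cyclic edge group, forcing a trivial edge of $\Lambda_0$; and \emph{(ii)} that no cylinder of $T_0$ is a single edge --- if $C_0 = C\cap T_{\Lambda_0}$ consisted of a single edge $e$ at a cylinder-center vertex $v$, then (since all of $T_\Lambda$'s edges at $v$ lie in $C$, and $G_v\leq G_0$ so $G_v\cdot e\subseteq T_{\Lambda_0}$) one would have to have $G_e = G_v$, i.e.\ a trivial edge. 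Without these two checks the claim that $T_0$ is its own tree of cylinders is not established, so as written the proof has a genuine hole in the place you yourself identified as the hard part.
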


The proof is based on Proposition 4.15 of \cite{GuirardelLevittUltimateJSJ}, which we quote here for convenience:
\begin{prop} \label{JSJFromUniversallyEll} Let $T$ be a universally elliptic $(G, {\cal H})$-tree. Then $G$ has a JSJ tree if and only if every vertex stabilizer $G_v$ of $T$ has a JSJ tree relative to the collection of incident edge groups. In this case, one can refine $T$ using these trees so as to get a JSJ tree of $G$. Conversely, if $v$ is a vertex of $T$ and $T_J$ is a JSJ decomposition for $G$ relative to ${\cal H}$, one can obtain a JSJ decomposition for $G_v$ relative to its collection of incident edge groups by considering the action of $G_v$ on its minimal subtree in $T_J$.
\end{prop}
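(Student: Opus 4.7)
The plan is to prove the statement in two steps: first, use Proposition \ref{JSJFromUniversallyEll} to identify $T_0$, the minimal $G_0$-invariant subtree of $T_\Lambda$, as a JSJ tree of $G_0$ relative to ${\cal H}_0 = \{H^1_0, \ldots, H^r_0\}$; second, verify by a direct computation that $T_0$ is its own tree of cylinders, so that it represents the tree of cylinders of this deformation space.

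For the first step I form the graph of groups $\Lambda''$ obtained by collapsing the connected subgraph $\Lambda_0$ inside $\Lambda$ to a single vertex $v_0$ with vertex group $G_0$; its edges are exactly the edges of $\Lambda$ that are not in $\Lambda_0$, and the edge groups incident to $v_0$ are precisely the $H^i_0$, embedded in $G_0$ via their embeddings in the vertex groups of $\Lambda_0$. The Bass-Serre tree $T''$ of $\Lambda''$ is obtained from $T_\Lambda$ by collapsing each $G$-translate of $T_0$ to a point. The edge stabilizers of $T''$ are edge stabilizers of the JSJ tree $T_\Lambda$, hence are universally elliptic in all cyclic $G$-trees, so $T''$ is a universally elliptic cyclic $G$-tree that $T_\Lambda$ refines. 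The converse direction of Proposition \ref{JSJFromUniversallyEll} then yields that at each vertex $v$ of $T''$, the action of $G_v$ on its minimal subtree in $T_\Lambda$ is a JSJ of $G_v$ relative to its incident edge groups; applied at $v_0$ this gives exactly that $T_0$ is a cyclic JSJ tree of $G_0$ relative to ${\cal H}_0$. Since $\Lambda_0$ has no trivial edges, $T_0/G_0 = \Lambda_0$ as graphs of groups.

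It remains to verify that $T_0$ coincides with the tree of cylinders of its JSJ deformation space. Recall that $T_\Lambda = T_c$ is its own tree of cylinders, hence bipartite with cylinder-type vertices (each the centre of a unique cylinder of $T_\Lambda$, stabilised by a maximal cyclic subgroup) and non-cylinder-type vertices (rigid or socket-type), with each cylinder of $T_\Lambda$ a star around its centre. Because commensurability of cyclic subgroups is intrinsic, the cylinders of $T_0$ as a $G_0$-tree are the non-empty intersections with $T_0$ of the cylinders of $T_\Lambda$. A cylinder of $T_\Lambda$ whose centre lies outside $T_0$ can meet the connected subtree $T_0$ in at most one leaf of the star and contributes no edges; hence cylinders of $T_0$ containing an edge correspond bijectively to cylinder-type vertices of $T_\Lambda$ lying in $T_0$. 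The main technical point is then to check that every non-cylinder-type vertex of $T_\Lambda$ lying in $T_0$ lies in at least two cylinders of $T_0$, i.e.\ that $T_0$ has no valence-one vertex at such a point. This follows from the minimality of $T_0$ together with the fact that $G_0$ has no global fixed point on $T_0$ whenever $\Lambda_0$ contains at least one edge (the degenerate case $\Lambda_0 = \{v\}$ being immediate). Once this is in place, the tree-of-cylinders construction applied to $T_0$ returns a bipartite tree with the same vertex and edge sets as $T_0$, so $T_0$ is its own tree of cylinders, completing the proof.
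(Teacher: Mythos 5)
Your proposal does not prove the statement it is attached to. The statement here is Proposition \ref{JSJFromUniversallyEll} --- the general assertion that a JSJ tree of $G$ relative to ${\cal H}$ exists if and only if each vertex group of a universally elliptic $(G,{\cal H})$-tree $T$ has a JSJ tree relative to its incident edge groups, together with the refinement and restriction statements. The paper does not prove this at all: it is quoted verbatim from Guirardel--Levitt (Proposition 4.15 of \cite{GuirardelLevittUltimateJSJ}) and used as a black box. What you have written is instead an argument for Proposition \ref{RelativeJSJSubgraphOfGroups} (that a connected subgraph of groups $\Lambda_0$ with no trivial edges inside the tree-of-cylinders JSJ is itself the tree-of-cylinders JSJ of $G_0$ relative to the adjacent edge groups), and your very first step invokes Proposition \ref{JSJFromUniversallyEll} as a known tool. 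Read as a proof of \ref{JSJFromUniversallyEll} this is circular; read as a proof of \ref{RelativeJSJSubgraphOfGroups} it answers a different question. A genuine proof of \ref{JSJFromUniversallyEll} would have to work from the definition of the JSJ deformation space: for the forward direction one refines $T$ at each vertex by a relative JSJ tree of $G_v$ and checks that the result is universally elliptic and dominates every universally elliptic $(G,{\cal H})$-tree, using the fact that a universally elliptic tree admits a refinement dominating any given tree (\cite[Lemma 2.8]{GuirardelLevittUltimateJSJ}); for the converse one must verify that the minimal subtree of $G_v$ in $T_J$ is universally elliptic and maximal among such for the relative class. None of this appears in your write-up.

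For what it is worth, as an argument for Proposition \ref{RelativeJSJSubgraphOfGroups} your sketch tracks the paper's proof of that result quite closely: collapse the components of the preimage of $\Lambda_0$ to vertices, apply \ref{JSJFromUniversallyEll}, then verify the tree-of-cylinders property by intersecting cylinders of $T_\Lambda$ with $T_0$. Two remarks if that is the proposition you actually intend to prove. First, you use the ``converse'' clause of \ref{JSJFromUniversallyEll} directly, whereas the paper uses the forward clause and then compares deformation spaces of the two refinements of the collapsed tree; your route is legitimate and arguably shorter. Second, the assertion ``since $\Lambda_0$ has no trivial edges, $T_0/G_0=\Lambda_0$'' is exactly where the no-trivial-edges hypothesis does its work and deserves the paper's argument: if the minimal subtree were proper, the complement would be a union of trees each attached by a single edge $e$ with all vertex and edge groups contained in the cyclic group $G_e$, producing a trivial edge. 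Similarly, the claim that every non-cylinder vertex of $T_0$ lies in at least two cylinders needs the observation that a cylinder meeting $T_0$ in a single edge would force that edge group to equal the stabilizer of the cylinder's central vertex, again contradicting the absence of trivial edges; minimality of $T_0$ alone does not quite give this.
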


\begin{proof}[Proof of Proposition \ref{RelativeJSJSubgraphOfGroups}] Consider the union $F(\Lambda_0)$ of all vertices and edges of $T_{\Lambda}$ mapped to $\Lambda_0$ by the quotient map, and pick a connected component $T'$. Then the action on $T'$ of its stabilizer is isomorphic to the action of the fundamental group $G_0$ of $\Lambda_0$ on the tree $T_{\Lambda_0}$ associated to $\Lambda_0$, so we identify them: $T' = T_{\Lambda_0}$.

Apply the lemma above to the tree $T$ obtained from $T_{\Lambda}$ by collapsing each component of $F(\Lambda_0)$ to a vertex - it is universally elliptic since $T_{\Lambda}$ is. Denote by $v_0$ the vertex $T_{\Lambda_0}$ is collapsed onto. The lemma tells us that the refinement of $T$ at $v_0$ by a JSJ tree $T^{JSJ}_{0}$ for $G_0$ relative to $H^1_0, \ldots, H^r_0$ is in the JSJ deformation space of $G$. Hence it is in the same deformation space as the refinement of $T$ at $v_0$ by  $T_{\Lambda_0}$ - that is, as $T_{\Lambda}$ itself, in other words, they have the same elliptic subgroups. In particular, the actions of $G_0$ on these two trees also have the same elliptic subgroups, hence they are in the same deformation space. Thus $T_{\Lambda_0}$ is also a JSJ tree for $G_0$ relative to $H^1_0, \ldots, H^r_0$.

To see that it is the tree of cylinders of the $JSJ$ deformation space, note that a tree is its own tree of cylinders if $(i)$ it is minimal and $(ii)$ its cylinders are stars and meet only in their non central vertex (a star is a graph which contains at least two edge, and all of whose edges are adjacent to a common central vertex). Note if two edges of $T_{\Lambda_0}$ are in the same orbit by $G$, they are in the same orbit under $G_0$ (any two translates of $T_{\Lambda_0}$ by elements of $G$ are disjoint or equal).

The minimal subtree $T_0$ of $G_0$ in $T_{\Lambda}$ is contained in $T_{\Lambda_0}$. Moreover, it is a union of orbits of edges under the action of $G_0$, which are the intersections of $T_{\Lambda_0}$ with orbits under the action of $G$. Thus $T_0$ corresponds to a subgraph of groups $\Lambda_1$ of $\Lambda_0$. Assume $\Lambda_1$ is a proper subgraph.
It is not hard to see that $\Lambda_0 - \Lambda_1$ is a union of trees, each connected by a unique edge $e$ to $\Lambda_1$ (otherwise some (product of) Bass-Serre elements, which is in $G_0$, does not preserve $T_{\Lambda_1}$). Now all the vertex and edge groups of this tree are contained in $G_e \simeq \Z$, but this contradicts the hypothesis that $\Lambda_0$ has no trivial edges. Thus $T_{\Lambda_0}$ is minimal.

A cylinder $C_0$ for the action of $G_0$ on $T_{\Lambda_0}$ is just the intersection of a cylinder $C$ in $T_{\Lambda}$ with $T_{\Lambda_0}$, since stabilizers in $G$ of edges of $T_{\Lambda_0}$ are in $G_0$. Suppose that $C_0$ consists of a single edge - this means that $C$ intersected $T_{\Lambda_0}$ in a single edge $e$, in particular no other edge of $C$ is in the orbit of $e$ under $G_0$, and thus under $G$. But this means that $G_e = G_v$ where $v$ is the center of $C$. This contradicts the hypothesis on $\Lambda_0$. Thus any cylinder in $T_{\Lambda_0}$ contains at least two edges.

Finally, if $C_0$ and $C'_0$ are two cylinders in the action of $G_0$ on $T_{\Lambda_0}$, coming from two cylinders $C$ and $C'$ in the action of $G$ on $T_{\Lambda}$, we have $C_0 \cap C'_0 \subseteq C \cap C'$, so $C_0$ and $C'_0$ intersect at most in one of their non central points. 
\end{proof}

We want to prove a similar result for the maximal cyclic JSJ decompositions.

\begin{defi} \label{GammaPlusDef} Let $G$ be a group, let $B_1, \ldots, B_q$ be a collection of maximal infinite cyclic subgroups of $G$. Suppose that the group $G^+$ is obtained from $G$ by amalgamating for each $i = 1, \ldots, r$ (for some $r \leq q$) an infinite cyclic group $B^+_i$ along $B_i$, where $B_i \leq B^+_i$ has finite index. 

Let $S$ be a $G$-tree with associated graph of groups $\Gamma$ in which each group $B_i$ for $i \leq r$ stabilizes a single vertex.

We denote by $\Gamma^+$ the graph of group obtained by adding for each $j \leq r$ a single edge attached to the vertex whose corresponding group contains (a conjugate of) $B_j$, with edge group $B_j$ and vertex group for the other endpoint $B^+_j$. We write  $S^+$ for the associated $G^+$-tree.
\end{defi}

\begin{rmk} Note that $(S^+)^{min}_G$ is isomorphic to $S$.
\end{rmk}

\begin{lemma} \label{LambdaPlusIsJSJ} Suppose $G$ is a freely indecomposable torsion free hyperbolic group, and let ${\cal B}= \{B_1, \ldots , B_q\}$ be a collection of cyclic subgroups of $G$. Denote by $\Lambda$ the tree of cylinder cyclic JSJ decomposition of $G$ relative to ${\cal B}$. Suppose that for any $j \leq r$ the subgroup $B_j$ does not stabilize an edge in any $(G, {\cal B})$-tree.

Let $B^+_1, \ldots , B^+_r$ be cyclic proper supergroups of finite index of $B_1, \ldots , B_r$. 

Then $\Lambda^+$ is the tree of cylinders JSJ decomposition of $G^+$ relative to 
$${\cal B}^+=\{B^+_1, \ldots, B^+_r, B_{r+1}, \ldots, B_q \}$$.
\end{lemma}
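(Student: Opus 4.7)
My plan is to transfer the JSJ properties of $\Lambda$ to $\Lambda^+$ via a correspondence between cyclic $(G,{\cal B})$-trees and cyclic $(G^+,{\cal B}^+)$-trees, and then to verify that $\Lambda^+$ realizes the tree of cylinders of its JSJ deformation space.

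First, the hypothesis that $B_j$ stabilizes no edge in any $(G,{\cal B})$-tree forces $B_j$ to fix a unique vertex $v_j$ in $T_{\Lambda}$, so the attaching vertex in the construction of $\Lambda^+$ is well-defined. Given any cyclic $(G^+,{\cal B}^+)$-tree $T^+$, set $T:=(T^+)^{min}_G$. Every element of ${\cal B}^+$ is elliptic in $T^+$, and since each $B_i\in{\cal B}$ sits inside an element of ${\cal B}^+$, every $B_i$ is elliptic in $T^+$; closest-point projection onto $T$ (which is equivariant with respect to any subgroup of $G$) then shows it is elliptic in $T$. Thus $T$ is a cyclic $(G,{\cal B})$-tree, and the same projection argument gives that a cyclic subgroup $H\le G$ is elliptic in $T^+$ if and only if it is elliptic in $T$. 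Conversely, any cyclic $(G,{\cal B})$-tree yields a cyclic $(G^+,{\cal B}^+)$-tree via Definition \ref{GammaPlusDef}.

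This correspondence immediately yields universal ellipticity of $\Lambda^+$: the new edge stabilizers $B_j$ are contained in $B_j^+\in{\cal B}^+$ and hence elliptic in any such $T^+$, while the old edge stabilizers of $\Lambda$, being universally elliptic in $(G,{\cal B})$-trees, are elliptic in $T=(T^+)^{min}_G$ and therefore in $T^+$. For domination, given a universally elliptic cyclic $(G^+,{\cal B}^+)$-tree $T^+$, the tree $T=(T^+)^{min}_G$ is itself universally elliptic in cyclic $(G,{\cal B})$-trees: for any such $T'$, pass to $(T')^+$, where edge stabilizers of $T$ are elliptic by hypothesis on $T^+$, then return via the equivalence above. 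By the JSJ property of $\Lambda$, its vertex stabilizers are elliptic in $T$ and thus in $T^+$; the remaining vertex stabilizers $B_j^+$ of $\Lambda^+$ are elliptic by assumption on ${\cal B}^+$. Hence $\Lambda^+$ lies in the JSJ deformation space of $G^+$ relative to ${\cal B}^+$.

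It remains to show that $\Lambda^+$ coincides with the tree of cylinders of its deformation space. Commensurability of subgroups of $G$ is intrinsic and therefore unchanged when passing from $G$ to $G^+$, so the cylinder structure of $T_{\Lambda}$ is preserved on the old edges of $T_{\Lambda^+}$. A new edge $e_j$ carries stabilizer $B_j$; since $B_j$ is maximal cyclic in $G$, its commensurator in $G^+$ is precisely $B_j^+$ (using the Bass-Serre structure of $G^+=G*_{B_j}B_j^+\cdots$), and this is the stabilizer of the newly attached vertex $w_j$, so $w_j$ plays the role of the $V_1$-type center of the cylinder of $e_j$. The remaining step is a case analysis at the attaching vertex $v_j$: either $v_j$ was already a cylindrical vertex with stabilizer commensurable to $B_j$, in which case $e_j$ enlarges that existing cylinder, or $v_j$ is a $V_0$-vertex and $e_j$ spans a new cylinder centered at $w_j$. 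This case analysis is the main obstacle I expect, since one must carefully track the bipartite $V_0$/$V_1$ structure inherited from $T_\Lambda$ in order to confirm that $\Lambda^+$ is literally its own tree of cylinders.
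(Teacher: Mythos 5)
Your proof follows essentially the same route as the paper's: universal ellipticity of $\Lambda^+$ by restricting $(G^+,{\cal B}^+)$-trees to $G$-minimal subtrees, domination by showing $Y^{min}_G$ is universally elliptic and applying the JSJ property of $\Lambda$, and then verifying the tree-of-cylinders property. The only difference is that you flesh out the tree-of-cylinders verification (which the paper dismisses with "it is not hard to see") via a case analysis at the attaching vertex; your sketch there is consistent with the criterion the paper uses elsewhere (minimality, cylinders are stars of $\geq 2$ edges meeting only at non-central vertices), though you rightly flag that step as the one needing the most care.
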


\begin{proof} An edge group of  $\Lambda^+$ is either an edge group of $\Lambda$, or one of the $B_i$ for $i \leq r$. Suppose $\Gamma$ is a cyclic splitting of $G^+$ in which all the subgroups of ${\cal B}^+$ are elliptic: it induces a splitting  of $G$ in which $B_1, \ldots, B_q$ are elliptic, and in such a splitting both edge groups of $\Lambda$ and the groups $B_i$ are elliptic. Thus the tree corresponding to $\Lambda^+$ is universally elliptic amongst cyclic $(G^+, {\cal B}^+)$-trees.	
	
We now want to show that the tree corresponding to $\Lambda^+$ dominates any other universally elliptic cyclic $(G^+, {\cal B}^+)$-tree $Y$. 

First, consider the minimal subtree $Y^{min}_G$ of $Y$ under the action of $G$, and show that it must be universally elliptic amongst $(G, {\cal B})$-trees: an edge stabilizer of $Y^{min}_G$ is of the form $H_G = H \cap G$ where $H$ is an edge stabilizer of $Y$. If $S$ is another $(G, {\cal B})$-tree, we can build the $(G^+,{\cal B}^+)$-tree $S^+$ as described in Definition \ref{GammaPlusDef}. Now note that $H$ is elliptic in $S^+$, hence $H_G = H \cap G$ must be elliptic in $S= (S^+)^{min}_G$.

Therefore, $T_{\Lambda}$ dominates $Y^{min}_G$, which is equivalent to saying that any subgroup of $G$ which is  elliptic in $T_{\Lambda}$ is elliptic in $Y^{min}_G$. Now if $K \leq G^+$ stabilizes a vertex $v$ in $T_{\Lambda^+}$, it is either a vertex stabilizer in $T_{\Lambda}$ or one of the $B^+_{j}$ - thus in both cases it fixes a vertex of $Y$. This proves that $T_{\Lambda}$ dominates $Y$. Finally, it is not hard to see that $T_{\Lambda^+}$ is its own tree of cylinders.
\end{proof}
 
We can now deduce that a subgraph of the maximal cyclic JSJ is the maximal cyclic JSJ of its fundamental group, relative to the adjacent edge groups:
\begin{prop} Let $\Lambda'$ be the maximal cyclic $JSJ$ decomposition for a freely indecomposable torsion free hyperbolic group $G$, and let $\Lambda'_0$ be a connected subgraph of groups of $\Lambda'$ with no trivial edges. Then $\Lambda'_0$ is the maximal cyclic JSJ decomposition of the subgroup $G'_0\leq G$ corresponding to $\Lambda'_0$, relative to the collection $H^1_0, \ldots, H^r_0$ of edge groups of $\Lambda'$ adjacent to $\Lambda'_0$.
\end{prop}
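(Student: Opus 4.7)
The plan is to reduce to the corresponding result Proposition \ref{RelativeJSJSubgraphOfGroups} for the tree-of-cylinders cyclic JSJ, combined with Lemma \ref{LambdaPlusIsJSJ}, which handles the passage from a cyclic JSJ to its maximal-cyclic upgrade at the boundary. Let $\Lambda$ denote the tree-of-cylinders cyclic JSJ of $G$, so that $\Lambda'$ is obtained from $\Lambda$ by first refining the Klein-bottle surface-type vertices (yielding $\tilde{\Lambda}$) and then replacing each non-cyclic vertex group $H_v$ by $\hat{H}_v$ and each edge group by its maximal cyclic supergroup. Since $\tilde{\Lambda}$ and $\Lambda'$ share the same underlying graph, $\Lambda'_0$ determines a subgraph $\tilde{\Lambda}_0 \subseteq \tilde{\Lambda}$ with identical underlying graph.

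The first sub-step is to check that $\tilde{\Lambda}_0$ is itself the Klein-bottle refinement of a uniquely determined subgraph $\Lambda_0 \subseteq \Lambda$, and that $\Lambda_0$ also has no trivial edges. The no-trivial-edges hypothesis on $\Lambda'_0$ rules out the case where $\tilde{\Lambda}_0$ cuts a Klein-bottle refinement: including only one of the two internal edges would leave the cyclic vertex $v_2$ with valence one and an isomorphic edge embedding, producing a trivial edge. Setting $G_0 := \pi_1(\Lambda_0)$, Proposition \ref{RelativeJSJSubgraphOfGroups} applied to $\Lambda_0 \subseteq \Lambda$ yields that $\Lambda_0$ is the tree-of-cylinders cyclic JSJ of $G_0$ relative to the collection ${\cal B} = \{E_1,\ldots,E_s\}$ of edge groups of $\Lambda$ adjacent to $\Lambda_0$.

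Next, observe that $G'_0 = \pi_1(\Lambda'_0)$ is obtained from $G_0$ by amalgamating, along each $E_j$, the maximal cyclic supergroup $\hat{E}_j$, namely the vertex group in $\Lambda$ of the cylinder vertex adjacent to the corresponding external edge of $\Lambda_0$ (this supergroup is not in $G_0$ precisely because the cylinder vertex lies outside $\Lambda_0$). Apply Lemma \ref{LambdaPlusIsJSJ} with this ${\cal B}$ and $B_j^+ = \hat{E}_j$: the hypothesis that $E_j$ stabilizes no edge in any cyclic $(G_0, {\cal B})$-tree is established by a splicing argument, using that such a splitting of $G_0$ could be inserted into $\Lambda$ to produce a cyclic splitting of $G$ contradicting the universal ellipticity of the edges of $\Lambda$. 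The conclusion is that $\Lambda_0^+$ is the tree-of-cylinders cyclic JSJ of $G'_0$ relative to $\hat{E}_1, \ldots, \hat{E}_s$, which is exactly the collection $H^1_0, \ldots, H^r_0$.

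Finally, the maximal cyclic JSJ of $G'_0$ relative to the $H^i_0$'s is produced from $\Lambda_0^+$ by the two-step construction of Section \ref{MaxCyclicJSJ}: Klein-bottle refinement followed by maximal-cyclic replacement of vertex and edge groups. The Klein-bottle refinements take place at the same surface vertices as in $\Lambda'$, and the maximal-cyclic amalgamation at each interior non-cyclic vertex $v$ is taken along all adjacent edges of $v$ in $\Lambda_0^+$; by design of the $+$ construction these are in natural bijection with the adjacent edges of $v$ in $\Lambda$, so the new vertex group matches $\hat{H}_v$ as in $\Lambda'_0$, and $\Lambda'_0$ is recovered. The main obstacles I anticipate are the verification of the hypothesis of Lemma \ref{LambdaPlusIsJSJ} (the splicing argument) and the careful bookkeeping in the last step that matches the maximal-cyclic amalgamations at interior vertices of $\Lambda_0^+$ with those defining $\hat{H}_v$ in $\Lambda'_0$, including at vertices adjacent to Klein-bottle refinements.
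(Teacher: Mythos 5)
Your plan is genuinely different from the paper's in one key place, and that is exactly where the gap lies. The paper does \emph{not} try to descend $\tilde{\Lambda}_0$ to a subgraph $\Lambda_0$ of the unrefined tree of cylinders $\Lambda$. Instead, it observes that $\tilde{\Lambda}$ is \emph{itself} the tree-of-cylinders cyclic JSJ decomposition of $G$ relative to the subgroups $Z_1,\ldots,Z_m$ corresponding to the two-sided simple closed curves on the once-punctured Klein bottles of $\Lambda$, and then applies (a relative form of) Proposition~\ref{RelativeJSJSubgraphOfGroups} directly to $\tilde{\Lambda}_0 \subseteq \tilde{\Lambda}$, keeping the $Z_i$'s along as part of the relative data throughout the rest of the argument (in the subsequent application of Lemma~\ref{LambdaPlusIsJSJ}, for instance). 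Your route, by contrast, must first establish that $\tilde{\Lambda}_0$ is the Klein-bottle refinement of a subgraph $\Lambda_0 \subseteq \Lambda$, and that claim is false in general.

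Concretely, your trivial-edge argument only rules out the case where $\tilde{\Lambda}_0$ contains $v_1$, $v_2$ and exactly one of the two internal edges of a Klein-bottle refinement. But there are other ways $\tilde{\Lambda}_0$ can cut such a refinement without producing a trivial edge. For instance, $\tilde{\Lambda}_0$ may contain the pair-of-pants vertex $v_1$ but neither $v_2$ nor either internal edge, together with the boundary edge and the cylinder vertex for $\langle tst^{-1}s\rangle$ and further edges of $\Lambda$ issuing from that cylinder vertex; this is connected and has no trivial edges (the boundary edge at $v_1$ has edge group $\hat{\langle tst^{-1}s\rangle}$ strictly smaller than $G_{v_1}$, and the cylinder vertex has valence $\geq 2$). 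Since $v_1$ was created by the refinement, no subgraph of $\Lambda$ has this $\tilde{\Lambda}_0$ as its Klein-bottle refinement, so your ``uniquely determined $\Lambda_0 \subseteq \Lambda$'' does not exist. A similar issue arises if $\tilde{\Lambda}_0$ consists of $v_2$ alone. To repair the argument, you would either need to handle these extra cases ad hoc or, more cleanly, follow the paper in treating $\tilde{\Lambda}$ as a relative tree of cylinders and applying Proposition~\ref{RelativeJSJSubgraphOfGroups} to $\tilde{\Lambda}_0 \subseteq \tilde{\Lambda}$. I would also note that your identification of the amalgamation data should distinguish, as the paper does, the external edges of $\Lambda_0$ whose endpoint outside $\Lambda_0$ is a cylinder vertex (where an $\hat E_j$ must be amalgamated) from those whose cylinder endpoint is inside $\Lambda_0$ (where it need not be); your parenthetical remark gestures at this but the bookkeeping is not carried out.
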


\begin{proof} Recall that if $\Lambda$ is the graph of groups corresponding to the tree of cylinders of the JSJ deformation space of $G$, for each vertex $v$ with non cyclic vertex group $H$ we denote by $\hat{H}$ the amalgamated product $H*_{E_1}\hat{E}_1* \ldots *_{E_l}\hat{E}_l$ where $E_1, \ldots, E_l$ are the images in $H$ of the edge groups of the edges $e_1, \ldots, e_l$ adjacent to $v$, and $\hat{E}_j$ is a copy of the (necessarily cyclic) vertex group corresponding to the other endpoint of $e_j$. 

Then $\Lambda'$ is obtained from $\Lambda$ by 1. refining surface type vertices corresponding to once punctured Klein bottles as described at the end of Section \ref{JSJSec} to get a graph of groups $\tilde{\Lambda}$ and 2. replacing in $\tilde{\Lambda}$ each vertex group $H$ by $\hat{H}$ and each adjacent edge group $E_j$ by $\hat{E}_j$ and deleting trivial edges. Recall that $\tilde{\Lambda}$ and $\Lambda'$ have the same underlying graph.

Note that (for example applying Proposition \ref{JSJFromUniversallyEll}) the graph of groups $\tilde{\Lambda}$ is the tree of cylinders JSJ decomposition of $G$ relative to the subgroups $Z_1, \ldots Z_m$ corresponding to the two-sided simple closed curves lying on once punctured Klein bottles that appear in $\Lambda$. 

Let $\tilde{\Lambda}_0$ be the subgraph of groups of $\tilde{\Lambda}$ whose underlying graph corresponds to $\Lambda'_0$. By Proposition \ref{RelativeJSJSubgraphOfGroups}, $\tilde{\Lambda}_0$ is the tree of cylinder cyclic JSJ decomposition of its fundamental group $\tilde{G}_0$ relative to the conjugates $Z'_1, \ldots Z'_n$ of the subgroups  which lie in $\tilde{G}_0$ and to the edge groups $B_1, \ldots, B_q$ of edges adjacent to $\tilde{\Lambda}_0$ which do not lie in $\tilde{\Lambda}_0$. Suppose that $r \leq q$ is such that $B_1, \ldots, B_r$ correspond to the edges adjacent to non cyclic type vertices of $\tilde{\Lambda}_0$ (so for $i \leq r$ the group $B_i$ lies in a non cyclic rigid type vertex or is a boundary subgroup of a surface type vertex group); while $B_{r+1}, \ldots, B_q$ are adjacent to cyclic type vertices, i.e. to centers of cylinders. 

Since $\tilde{\Lambda}_0$ corresponds to the the tree of cylinder cyclic JSJ decomposition of $\tilde{G}_0$ relative to $B_1, \ldots, B_q$ and $Z'_1, \ldots, Z'_n$, it has a common refinement with any $(\tilde{G}_0, (\{B_i\}_{i=1, \ldots,q}, \{Z'_j\}_{j=1, \ldots,n}))$-tree. We deduce that for $1, \ldots, r$ the group $B_i$ does not stabilize an edge in any such tree (since extra edges in the common refinement come from non boundary parallel simple closed curves on the surfaces, they cannot have $B_i$ as an edge group). 

Now $G'_0$ can be obtained from the fundamental group $\tilde{G}_0$ of $\tilde{\Lambda}_0$ by amalgamating supergroups of finite index $B^+_1, \ldots, B^+_r$ to the subgroups $B_1, \ldots, B_r$. 

The conditions of Lemma \ref{LambdaPlusIsJSJ} are satisfied, thus the tree of cylinders JSJ decomposition for $G'_0$ relative to $B^+_1, \ldots, B^+_r, B_{r+1}, \ldots, B_q, Z'_1, \ldots, Z'_n$ is precisely $\tilde{\Lambda}^+$ (i.e. it is obtained from $\tilde{\Lambda}_0$ as described in Definition \ref{GammaPlusDef}). 

It is not hard to see now that $\Lambda'_0$ is obtained from $\tilde{\Lambda}^+_0$ by replacing each vertex group $H$ by $\hat{H}$ and each adjacent edge group $E_j$ by $\hat{E}_j$. This means that $\Lambda'_0$ is precisely the maximal cyclic JSJ decomposition for $G'_0$ relative to $B^+_1, \ldots, B^+_r, B_{r+1}, \ldots, B_q, Z'_1, \ldots, Z'_n$, which are either the edge groups of $\Lambda'-\Lambda'_0$ adjacent to $\Lambda'_0$, or universally elliptic in maximal cyclic decompositions of $G_0$, and therefore of $G'_0$. 
\end{proof}

\section{Elementary submodels} \label{ElementarySubmodels}

Suppose a group $G$ admits a finitely generated elementary submodel $G'$. Then for $G$ to be homogeneous, it is necessary that any automorphism of $G'$ extend to an automorphism of $G$. Indeed, if $\theta' \in \Aut(G')$ and $u$ is a generating tuple for $G'$, the tuples $u$ and $\theta'(u)$ have the same type in $G'$, hence also in $G$. So if $G$ is homogeneous, there exists $\theta \in \Aut(G)$ which sends $u$ to $\theta'(u)$, in other words, which extends $\theta'$. It will thus be important to understand elementary submodels of torsion-free hyperbolic groups: this is the purpose of this section.

\begin{defi} \emph{(extended hyperbolic floor relative to a subgroup)} Let $G$ be a group, $H \leq G$ and let $r:G \to G'$ be a retraction. We say that $(G, G', r)$ is an \textbf{extended hyperbolic floor} relative to $H$ if there exists a graph of groups with surfaces $\Gamma$ such that
\begin{itemize}
\item $\Gamma$ is bipartite between surface and non surface type vertices;
\item each surface of $\Gamma$ is a punctured torus or has Euler characteristic at most $-2$;
\item there is a lift  $\Gamma^0$ of a maximal subtree of $\Gamma$ to the tree $T$ associated to $\Gamma$ such that if $H_1, \ldots, H_k$ are the stabilizers  of the non-surface type vertices of $\Gamma^0$, then $H\leq H_i$ for some $i$ and $G' = H_1 * \ldots * H_k$;
\item the image by $r$ of any surface type vertex group is non abelian; or $G'$ is cyclic and there exists a retraction $r': G * \Z \to G' * \Z$ which sends surface type vertex groups of $\Gamma$ to non abelian images.
\end{itemize}
If the first alternative holds in this last condition, we say that $(G,G',r)$ is a \textbf{hyperbolic floor}.
\end{defi}

\begin{defi} \emph{(extended hyperbolic tower)} \label{HypTower}
Let $G$ be a non cyclic group, let $H$ be a subgroup of $G$.
We say that $G$ is an \textbf{(extended) hyperbolic tower} over $H$ if there exists a finite
sequence $G=G^0 \geq G^1 \geq \ldots \geq G^m \geq H$ of subgroups of $G$ where $m \geq 0$ and:
\begin{itemize}
\item for each $k$ in $[0, m-1]$, there exists a  retraction $r_k:G^{k} \rightarrow G^{k+1}$
such that the triple $(G^k, G^{k+1}, r_k)$ is an (extended) hyperbolic floor relative to $H$;
\item $G^m = H * F * S_1 * \ldots * S_p$ where $F$ is a (possibly trivial) free group, $p \geq 0$, and each $S_i$
is the fundamental group of a closed surface without boundary of Euler characteristic at most $-2$.
\end{itemize}
If all the floors that appear are in fact (non extended) hyperbolic floors, we say $G$ is a \textbf{hyperbolic tower} over $H$.
\end{defi}

\begin{rmk} \label{FloorIsNotTower}
Note that if $(G, G', r)$ is an extended hyperbolic floor, it is not necessarily the case that $G$ is an extended hyperbolic tower over $G'$ - indeed, $G'$ might not be elliptic in the associated floor decomposition.

For example, if $G = \langle x,y, a,b \mid x^2y^2= a^2b^2 \rangle$, then $G$ has a floor structure over $G'=\langle x \rangle * \langle y \rangle$. The associated graph has one surface which is a twice punctured Klein bottle, with vertex group $\langle a, b, x^2, y^2 \mid x^2y^2= a^2b^2\rangle$, two non surface type vertices with associated group $\langle x \rangle$ and $\langle y \rangle$ and joined to the surface vertex by an edge with group $\langle x^2 \rangle$ and $\langle y^2 \rangle$ respectively. The retraction is given by $a \mapsto x, \; b \mapsto y$.
However it can be shown that $G$ is not a tower over $G'$, because there is no floor whose associated decomposition contains $\langle x, y \rangle$ in one of its non surface type vertices. 
\end{rmk}

This motivates the following definition
\begin{defi} \emph{(tower retract)} A subgroup $U \leq G$ is a tower retract of $G$ (respectively a tower retract relative to some subgroup $U' \leq U$) if there exists a sequence $G = G_0 > G_1 > \ldots > G_m=U$ with retractions $r_i: G_i \to G_{i+1}$ such that $(G_i, G_{i+1}, r_i)$ is a hyperbolic floor (in whose graph of groups decomposition $\Lambda_i$ the subgroup $U'$ is elliptic).
\end{defi}

Towers over a subgroup allow us to describe exactly finitely generated elementary extensions of torsion free hyperbolic groups. One implication of the following theorem is proved in \cite{PerinElementary}, and the other is due to \cite{SelaPrivate}
\begin{thm} \label{ElementarySubgroups} Let $H$ be a non cyclic subgroup of a torsion free hyperbolic group. Then $H$ is an elementary submodel of $\Gamma$ if and only if $\Gamma$ is a hyperbolic tower over $H$.
\end{thm}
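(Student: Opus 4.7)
The statement is a deep theorem whose two directions are of very different natures, and I would handle them separately.

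For the easier direction, assume $\Gamma$ is a hyperbolic tower over $H$. I would proceed by induction on the height $m$ of the tower, reducing to the case of a single hyperbolic floor $(\Gamma, H, r)$. For this core case, the task is to verify the Tarski--Vaught test: for every first-order formula $\varphi(\bar x, y)$ and every tuple $\bar a$ in $H$, if $\Gamma \models \exists y\, \varphi(\bar a, y)$ then a witness can be found inside $H$. Since the theory of torsion-free hyperbolic groups admits quantifier reduction to $\forall\exists$-level statements (Sela's work), the essential case is a $\forall\exists$-sentence $\forall \bar y\, \exists \bar z\, \psi(\bar a, \bar y, \bar z)$ holding in $\Gamma$. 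Given a test tuple $\bar y \in H$, one obtains a witness $\bar z_0 \in \Gamma$; the plan is to apply the retraction $r$ and modify $\bar z_0$ by modular automorphisms of $\Gamma$ (Dehn twists coming from the surface vertices of the floor decomposition $\Gamma$) to ensure that $r(\bar z_0)$ still satisfies $\psi(\bar a, \bar y, \cdot)$. The technical engine here is the Merzlyakov-style theorem on the existence of formal solutions over test sequences on the surface vertex groups, combined with the non-degeneracy assumption (that $r$ sends surface vertex groups to non-abelian images); this is exactly the setting in which test sequences on surfaces produce formal solutions whose images under $r$ remain solutions. This is essentially the strategy of \cite{PerinElementary}.

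For the harder converse direction, assume $H \preceq \Gamma$ with $H$ non cyclic and finitely generated (the general case is reduced to this by compactness plus the fact that $\Gamma$ is finitely generated). The first step is to analyze the space of retraction-like homomorphisms $\Gamma \to H$ restricting to identity on $H$: elementary equivalence forces $\Gamma$ to satisfy the $\exists$-diagram of $H$, and more importantly, the full elementary diagram gives access to generic homomorphisms of this form via a diagonal/compactness argument. The plan is then to feed such a sequence of homomorphisms into the Rips machine / relative Makanin--Razborov analysis, yielding a limit action of $\Gamma$ on a real tree relative to $H$. Applying the shortening argument (with $H$-modular automorphisms, i.e.\ those fixing $H$), the flexible vertices of the resulting graph of actions must be surface type, and the arc stabilizers cyclic; this produces a candidate floor decomposition $\Lambda$ of $\Gamma$ over some proper subgroup $\Gamma^1$ containing $H$.

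The crucial remaining step is to upgrade this structural decomposition into an actual hyperbolic floor structure, i.e.\ to produce the retraction $r: \Gamma \to \Gamma^1$ sending surface vertex groups to non-abelian images. The idea is to use the closure of the limit group and a Merzlyakov-type formal solution theorem again, this time in reverse: the fact that $H \preceq \Gamma$ means that certain $\forall\exists$-sentences true in $H$ force the existence of a retraction with the desired non-degeneracy. One then iterates: set $\Gamma^1$ in place of $\Gamma$ and note that $H \preceq \Gamma^1$ still (as $H \preceq \Gamma$ and $\Gamma^1$ carries the induced substructure). Since at each floor the complexity of the JSJ strictly drops (measured, say, by the sum of Euler characteristics of surfaces plus rank), the iteration terminates, and the final group $\Gamma^m$ must have the form $H*F*S_1*\cdots*S_p$ demanded in the definition of tower.

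The main obstacle is this last step: producing the genuine retraction with non-abelian images on surfaces from the mere existence of the limit JSJ with surface flexible vertices. This is exactly where Sela's unpublished argument (referenced as \cite{SelaPrivate}) is needed; the machinery it relies on, namely the interplay between test sequences, formal solutions, and the closure of limit groups, is very delicate, and carrying it out in our framework is the crux of the theorem.
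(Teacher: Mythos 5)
The paper does not prove this theorem: it simply records it, citing \cite{PerinElementary} for one implication and \cite{SelaPrivate} (unpublished) for the other, so there is no ``paper's own proof'' for your sketch to match. That said, you have the attributions swapped. The implication you call the ``easier direction'' (tower over $H$ implies $H \preceq \Gamma$) is not the content of \cite{PerinElementary}; it is the one that is unpublished and credited to \cite{SelaPrivate}. It is the ``harder converse direction'' (elementary embedding implies tower structure) that is the main theorem of \cite{PerinElementary}, and the Rips-machine / shortening-argument / JSJ analysis you describe in your second paragraph is precisely the engine of Perin's argument --- so your claim that the crux of that direction is Sela's unpublished argument is backwards: Perin's paper is self-contained there, and it is the converse you label ``easier'' which rests on the unpublished material.

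On substance, your outline of the direction $H \preceq \Gamma \Rightarrow$ tower is a fair high-level summary of Perin's strategy: use the elementary diagram to produce non-injective preretraction-like maps $\Gamma \to H$, pass to a limit action, apply the shortening argument relative to $H$ to get a graph of actions whose flexible pieces are surfaces, and iterate. Your sketch of the other direction correctly identifies the Merzlyakov-type formal-solution machinery and the role of the non-abelianity hypothesis on the surface images, and is in line with the free-group prototype (Sela's closure/test-sequence arguments) that \cite{SelaPrivate} generalizes. Both sketches, however, remain at the level of naming the tools rather than carrying out the arguments; since the paper is equally silent on the proof, that is a matter for the cited references rather than a gap in your attempt per se, but the reversed attribution should be corrected.
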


\begin{rmk} \label{SubtowerIsNotAlwaysElementary} By Remark \ref{FloorIsNotTower}, the fact that $U$ is a tower retract of $G$ is not enough to show that $U$ is an elementary subgroup of $G$. In fact, in the example given in Remark \ref{FloorIsNotTower}, it is possible to show that $G'$ is not elementary in $G$.
\end{rmk}

However we can prove the following
\begin{prop} \label{TypesOfFactorOfSubtowerArePreserved}
Suppose that $(G, H, r)$ is a hyperbolic floor with associated decomposition for $H$ given by $H = H_0 * H_1 * \ldots * H_k$. Then the subgroup $\hat{H} = H_0* H_1^{a_1} * \ldots * H^{a_k}_k *\langle z \rangle$ is elementary in the group $\hat{G} = G * \langle a_1, \ldots, a_n, z \mid \; \rangle$.

Moreover for any tuple $\bar{g} \in H_0$, we have
$$ \tp^{H*\Z}(\bar{g}) = \tp^{G}(\bar{g})$$
If $H$ is non abelian, then in fact $\tp^{H}(\bar{g}) = \tp^{G}(\bar{g})$.
\end{prop}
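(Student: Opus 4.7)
The plan is to apply Theorem~\ref{ElementarySubgroups}, which reduces proving $\hat{H} \preceq \hat{G}$ to exhibiting $\hat{G}$ as a hyperbolic tower over $\hat{H}$. We will build this tower as: a single hyperbolic floor from $\hat{G}$ down to the intermediate group $H*F$, where $F = \langle a_1, \ldots, a_n, z \rangle$, followed by recognizing $H*F$ itself as of the form $\hat{H} * (\text{free group})$, i.e.\ a ``zero-floor tower'' over $\hat{H}$.

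For the floor step, start from the graph of groups $\Gamma$ witnessing the floor structure $(G, H, r)$, and enlarge the non-surface vertex group $H_0$ (the vertex of the spanning tree lift whose stabilizer is $H_0$) to $H_0 * F$, leaving the underlying graph, the surfaces, the edge groups and all other vertex groups unchanged. The resulting graph of groups has fundamental group $G *_{H_0}(H_0 * F) = G * F = \hat{G}$, is still bipartite between surface and non-surface vertices, its non-surface vertex groups now free-multiply to $(H_0 * F) * H_1 * \ldots * H_k = H * F$ along the spanning tree lift, and the retraction $r$ extends to $\hat{r}:\hat{G} \to H * F$ by the identity on $F$, preserving the non-abelian images on surface vertices. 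Thus $(\hat{G}, H*F, \hat{r})$ is a hyperbolic floor. For the zero-floor step, observe that inside the subgroup $H_i * \langle a_i \rangle$ of $H*F$ (for $i \geq 1$) one has the alternative decomposition $H_i * \langle a_i \rangle = H_i^{a_i} * \langle a_i \rangle$, since the subgroup $\langle H_i, a_i \rangle$ equals $\langle H_i^{a_i}, a_i \rangle$ (a Nielsen-type change of basis). Substituting and rearranging free factors:
$$ H * F = H_0 * H_1^{a_1} * \ldots * H_k^{a_k} * \langle z \rangle * \langle a_1 \rangle * \ldots * \langle a_n \rangle = \hat{H} * \langle a_1, \ldots, a_n \rangle, $$
exhibiting $\hat{H}$ as a free factor of $H*F$. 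Since $\hat{H}$ contains the non-cyclic group $H_0 * \langle z \rangle$ (and $H_0$ is non-trivial because it carries an edge group adjacent to a surface), Theorem~\ref{ElementarySubgroups} applied to each step yields $\hat{H} \preceq H * F \preceq \hat{G}$.

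For the moreover claims, note that $G$ contains a surface subgroup (every floor has at least one surface), hence $G$ is non-cyclic and $G \preceq G * F = \hat{G}$ as a zero-floor tower. Define the abstract isomorphism $\varphi : H * \langle z \rangle \to \hat{H}$ as the identity on $H_0$ and on $\langle z \rangle$ and as conjugation $h \mapsto a_i^{-1} h a_i$ on each $H_i$ for $i \geq 1$. Since $\varphi$ fixes every tuple $\bar{g}$ lying in $H_0$,
$$ \tp^{H*\Z}(\bar{g}) = \tp^{\hat{H}}(\varphi(\bar{g})) = \tp^{\hat{H}}(\bar{g}) = \tp^{\hat{G}}(\bar{g}) = \tp^G(\bar{g}). $$
If in addition $H$ is non-abelian, then $H$ is non-cyclic as a subgroup of a torsion-free hyperbolic group, so $H \preceq H * \langle z \rangle$ as a zero-floor tower, yielding $\tp^H(\bar{g}) = \tp^G(\bar{g})$. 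The main technical point deserving care is verifying that the enlargement of $H_0$ to $H_0 * F$ preserves all the floor-structure conditions (surface Euler characteristics, bipartiteness, the spanning-tree lift reading off the new base, and the non-abelian image on surface vertices); these are immediate from the construction but should be checked explicitly.
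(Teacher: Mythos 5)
Your proof of the first claim has a genuine gap, and it is precisely the subtlety flagged in Remark~\ref{FloorIsNotTower}. You build the graph of groups for $\hat{G}$ by enlarging the vertex group at $v_0$ from $H_0$ to $H_0*F$, keeping the rest of $\Gamma$ intact. That does give a hyperbolic floor $(\hat{G}, H*F, \hat{r})$, and you correctly observe $H*F = \hat{H} * \langle a_1,\ldots,a_n\rangle$. But for the chain $\hat{G} > H*F \geq \hat{H}$ to be a hyperbolic \emph{tower} over $\hat{H}$ (which is what Theorem~\ref{ElementarySubgroups} requires), Definition~\ref{HypTower} demands that the floor $(\hat{G}, H*F, \hat{r})$ be a floor \emph{relative to} $\hat{H}$, i.e.\ that $\hat{H}$ lie inside a single non-surface vertex group of a lift of a maximal subtree. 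In your decomposition the non-surface vertex groups are $H_0*F, H_1,\ldots,H_k$ (up to conjugacy), and $\hat{H}$ is contained in none of them: its free factor $H_1^{a_1}$ meets $H_0*F$ only trivially, since $H_1 \cap (H_0*F)=1$ in the free product $\hat{G}=G*F$. More to the point, $\hat{H}$ is not even elliptic in the associated tree: $H_0$ fixes $\tilde{v}_0$, while $H_1^{a_1}$ fixes $a_1^{-1}\tilde{v}_1$, and these are distinct because $a_1$ fixes only $\tilde{v}_0$. For the same reason your intermediate claim ``$H*F \preceq \hat{G}$'' is unsupported: $H*F$ is not elliptic in this decomposition, so $\hat{G}$ being a \emph{floor} over $H*F$ does not make it a \emph{tower} over $H*F$.

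The paper repairs exactly this point by using a different graph of groups for $\hat{G}$. Instead of fattening the vertex group at $v_0$, one adds a trivially stabilized edge from $v_0$ to each $v_i$ (with Bass--Serre letter $a_i$) and a trivially stabilized loop at $v_0$ (with letter $z$), and then \emph{collapses} these $k+1$ new edges. The collapsed graph $\hat{\Lambda}$ has a single non-surface vertex whose group is precisely $\hat{H}=H_0*H_1^{a_1}*\cdots*H_k^{a_k}*\langle z\rangle$, so $\hat{H}$ is elliptic there, the retraction $\hat{r}=j\circ r$ (with $\hat{r}(a_i)=1$, $\hat{r}(z)=z$) lands directly in $\hat{H}$, and $(\hat{G},\hat{H},\hat{r})$ is a one-floor tower over the non-abelian group $\hat{H}$. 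Your ``moreover'' argument (using $G\preceq\hat{G}$ and the isomorphism $\varphi:H*\Z\to\hat{H}$ fixing $\bar{g}$) is correct and is the same as the paper's, once the elementary embedding $\hat{H}\preceq\hat{G}$ is established by the collapse construction rather than the one you propose.
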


\begin{proof} Let $\Lambda$ be the graph of groups associated to the hyperbolic floor $(G, H, r)$. We see the group $\hat{G}= G * \langle a_1, \ldots, a_k \mid \rangle*\Z$ as the fundamental group of the graph of groups obtained from $\Lambda$ by adding a trivially stabilized edge between the vertex $v_0$ corresponding to $H_0$ and each of the vertices $v_i$ corresponding to $H_i$ and a trivially stabilized loop from $v_0$ to itself, where the extra Bass-Serre elements are exactly $a_1, \ldots a_k$ and $z$ respectively. 

The fundamental group of the subgraph of groups consisting of exactly these $k+1$ new edges is precisely $ֿ\hat{H}=H_0* H_1^{a_1} * \ldots * H^{a_k}_k * \langle z \rangle$. There is an obvious embedding $j: H \to \hat{H}$ which is simply the identity on $H_0$ and conjugation by $a_i$ on $H_i$ for $i \geq 1$.

By collapsing these trivially stabilized edges we get a graph of groups $\hat{\Lambda}$ with a single non surface type vertex whose corresponding group is $\hat{H}$. We define a retraction $\hat{r}: \hat{G} \to \hat{H}$ by setting $\hat{r}\mid_G = j \circ r$, $\hat{r}(a_i) = 1$ and $\hat{r}(z)=z$. Now $(\hat{G}, \hat{H}, \hat{r})$ is a hyperbolic floor with associated decomposition $\hat{\Lambda}$. Since $\hat{H}$ is elliptic in $\hat{\Lambda}$, we have in fact that $\hat{G}$ is a tower over the non abelian subgroup $\hat{H}$, which by Theorem \ref{ElementarySubgroups} implies that $\hat{H}$ embeds elementarily in $\hat{G}$.

Now note that  by Theorem \ref{ElementarySubgroups}, we have that $G$ is elementarily embedded in $\hat{G}$, and   hence for any tuple $\bar{g} \in H_0$, we have
$$ \tp^{H * \Z}(\bar{g}) = \tp^{\hat{H}}(\bar{g}) =  \tp^{\hat{G}}(\bar{g}) = \tp^{G}(\bar{g}).$$

Now if $H$ is not abelian, a similar proof using only $k$ trivially stabilized edges from $v_0$ to each $v_i$ for $i>0$ shows that $G * \langle a_1, \ldots, a_k \mid \rangle$ is a hyperbolic tower over the subgroup $H_0* H_1^{a_1} * \ldots * H^{a_k}_k $, hence 
$$ \tp^{H}(\bar{g}) = \tp^{H_0* H_1^{a_1} * \ldots * H^{a_k}_k}(\bar{g}) =  \tp^{G * \langle a_1, \ldots, a_k \mid \rangle}(\bar{g}) = \tp^{G}(\bar{g})$$
for any tuple $\bar{g} \in H_0$.
\end{proof}

The following is a relative version of Definition 7.5 of \cite{Sel7}.
\begin{defi} \emph{(relative core)} \label{RelativeCoresDef} Let $\bar{u}$ be a tuple of elements in a torsion-free hyperbolic group $G$. The subgroup $U \leq G$ is a core of $G$ with respect to $\bar{u}$ if the following conditions are satisfied:
\begin{enumerate}
\item  there is a tower retract of $G$ with respect to the subgroup $\langle \bar{u} \rangle$ of the form $U * \F$ where $\F$ is a free group;
\item no factor of the Grushko decomposition of $U$ relative to $\langle \bar{u} \rangle$ is a free group except possibly the one containing $\langle \bar{u} \rangle$;
\item $U$ does not admit a retraction $r$ to a subgroup $r(U)$ such that $(U, r(U), r)$ is an extended hyperbolic floor with respect to $\langle \bar{u} \rangle$.
\end{enumerate}

\end{defi}
Such a relative core always exists. Indeed, a chain of successive floors $G_0 > G_1 > G_2 > \ldots$ is a chain of epimorphisms between subgroups of the torsion free hyperbolic group $G$, hence must stop by the descending chain condition for limit group, given by Theorem 1.12 of \cite{Sel7}.

\begin{rmk} \label{CoresHaveNoClosedSurfaces}Note that if $U_0 * U_1 * \ldots * U_s$ is the Grushko decomposition of a core $U$ of $G$ relative to $\bar{u}$, with $\bar{u} \in U_0$ then the maximal cyclic JSJ decomposition $\Lambda_i$ of $U_i$ (relative to $\bar{u}$ if $i=0$) cannot consist of a single surface type vertex. Indeed, the only fundamental group of a closed surface which is hyperbolic and does not admit a structure of extended hyperbolic floor over a proper subgroup is that of the connected sum of three projective planes, but by definition, the surface type vertices of a JSJ decomposition are the flexible vertices, which excludes this case.  
\end{rmk}

The following result will be central in the rest of the paper. Its proof is a refinement of the proof of Theorem 6.1 of \cite{PerinSklinosHomogeneity}.
\begin{thm} \label{EqualTypesImpliesIsomorphismOfCores} Let $G$ be a torsion free hyperbolic group. Let $\bar{u}, \bar{v}$ be two tuples in $G$. Let $U, V$ be cores of $G$ with respect to $\bar{u}, \bar{v}$ respectively.

If $\tp^G(\bar{u}) = \tp^G(\bar{v})$, there exists an isomorphism $U \to V$ sending $\bar{u}$ to $\bar{v}$.
\end{thm}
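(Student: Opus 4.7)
The plan is to construct mutually inverse homomorphisms $g\colon U\to V$ with $g(\bar u)=\bar v$ and $h\colon V\to U$ with $h(\bar v)=\bar u$, following the shortening paradigm of the proof of Theorem~6.1 of \cite{PerinSklinosHomogeneity}, adapted to the relative setting of cores with possibly non-trivial JSJ.

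First I would use the type equality to produce initial homomorphisms. Write $U=\langle u_1,\dots,u_n\mid R_1,\dots,R_s\rangle$ with $\bar u$ among the generators. The existential formula asserting that there exist $\bar y$ satisfying the relations $R_j$ and containing a designated subtuple equal to $\bar x$ lies in $\tp^G(\bar u)$, hence by hypothesis in $\tp^G(\bar v)$; this yields a homomorphism $f\colon U\to G$ with $f(\bar u)=\bar v$. Since $V$ is a tower retract of $G$ relative to $\bar v$, composing the floor retractions and projecting the free factor to the trivial group provides a retraction $\rho_V\colon G\to V$ fixing $\bar v$; setting $g:=\rho_V\circ f$ gives $g\colon U\to V$ with $g(\bar u)=\bar v$. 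A symmetric construction produces $h\colon V\to U$ with $h(\bar v)=\bar u$. It then suffices to show that $h\circ g$ and $g\circ h$ are automorphisms of $U$ and $V$ respectively; this would immediately imply that $g$ and $h$ are inverse isomorphisms.

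Showing that $h\circ g\colon U\to U$ is an automorphism is the heart of the argument. Assume for contradiction that it is not. I would precompose $h\circ g$ by elements of $\Mod(U)$---which fix $\bar u$ up to conjugation, using that by the core hypothesis and Remark \ref{CoresHaveNoClosedSurfaces} the tuple $\bar u$ is elliptic in the maximal cyclic JSJ of $U$ relative to $\bar u$---to produce a sequence $\{\phi_n\}$ of pairwise distinct endomorphisms of $U$ fixing $\bar u$, chosen as short as possible. A Bestvina--Paulin limit extracts from $\{\phi_n\}$ a non-trivial action of $U$ on an $\R$-tree $T$ in which $\bar u$ is elliptic. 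Analyzing $T$ via the Rips machine and combining the resulting splitting with the ambient tower retraction from $G$ onto $U$ would yield a proper extended hyperbolic floor retract of $U$ relative to $\bar u$, contradicting the third clause of Definition \ref{RelativeCoresDef}.

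The main obstacle is this last step: upgrading the abstract $\R$-tree splitting supplied by the Rips machine into a genuine extended hyperbolic floor retract of $U$ living inside $G$. This is where the ambient tower structure of $U$ within $G$ must be carefully exploited, and it constitutes the substantive refinement over the free-group proof of \cite{PerinSklinosHomogeneity}. A secondary subtlety is ensuring that the limit $\R$-tree action is genuinely non-trivial, i.e., that the $\phi_n$ are pairwise non-conjugate modulo $\Mod(U)$; the degenerate case requires a separate argument deducing surjectivity of $h\circ g$ directly from the core property, since otherwise $(h\circ g)(U)$ would already define a proper floor retract of $U$.
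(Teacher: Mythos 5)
The shortening intuition is the right one, but there is a genuine gap in how you plan to derive the contradiction, and it is the crux of the matter. You build $g=\rho_V\circ f$ and $h=\rho_U\circ f'$ from arbitrary solutions $f,f'$ of the existential formulas coming from presentations of $U$ and $V$, and then hope to show $h\circ g$ is an automorphism by running a Bestvina--Paulin/Rips machine argument. But the morphism $h\circ g\colon U\to U$, as you construct it, has \emph{no reason} to restrict to conjugation on the rigid vertex groups of the relative JSJ of $U$, nor to send its surface vertex groups to non-abelian images. Without that \emph{preretraction} property (being $(\Lambda_0,\dots,\Lambda_m)$-related to the identity in the terminology of Section~\ref{ElementarySubmodels}), the Rips machine only produces some cyclic or free splitting of $U$, not a hyperbolic floor retract. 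It is precisely Theorem~5.9 of \cite{PerinSklinosHomogeneity} --- a freely indecomposable factor admitting a non-injective \emph{preretraction} over its JSJ carries a floor structure --- that converts a shortening argument into the desired contradiction with clause~3 of Definition~\ref{RelativeCoresDef}, and that theorem simply does not apply to the raw $h\circ g$. In the free-group setting of \cite{PerinSklinosHomogeneity} this issue is invisible because there are no rigid or surface vertex groups to respect; in the general torsion-free hyperbolic setting it is the main new difficulty, and your outline only gestures at it.

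The paper resolves this by a much more careful use of the type equality, via Proposition~\ref{SameTypeImpliesFactorInjOnCores}. One first shows (by chasing down the tower of $G$ over $V$ using Lemma~\ref{ComposingWithRetractionisRelated}) that if no \emph{factor injective} morphism $U\to V$ sends $\bar u$ to $\bar v$, then \emph{every} morphism $U\to G$ sending $\bar u\mapsto\bar v$ admits a $(\Lambda_0,\dots,\Lambda_m)$-related morphism killing one of finitely many test elements $w_1,\dots,w_r$ supplied by Proposition~\ref{FactorInjOrKillSomeone}. This statement is first-order in $\bar v$, so by $\tp^G(\bar u)=\tp^G(\bar v)$ it also holds for $\bar u$; applying it to the identity embedding $U\hookrightarrow G$ and then pushing down the tower of $G$ over $U$ yields an endomorphism of $U$ that \emph{is} a non-injective preretraction of some factor $U_i$ over $\Lambda_i$, giving the contradiction. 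Only then does one have a factor injective $U\to V$, a symmetric $V\to U$, and a co-Hopf/Grushko counting argument to produce the isomorphism. Two further things your proposal elides and which this route handles: the free decomposability of $U$ and $V$ (the notion of ``factor injective'' and the Grushko decomposition relative to $\bar u$ are essential, whereas your argument treats $U$ as a single piece), and the fact that the ``degenerate'' case you mention --- bounded shortening lengths --- does not on its own give surjectivity of $h\circ g$; the dichotomy is phrased in terms of preretractions, not surjectivity, and Hopficity is used only after injectivity is in hand.
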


Recall from Section 7 of \cite{PerinSklinosHomogeneity} that if $\Lambda$ is a graph of groups with surfaces, two morphisms $h,h': \pi_1(\Lambda) \to G$ are said to be $\Lambda$-related if:
(i) they coincide up to conjugation on non surface type vertex groups of $\Lambda$, and 
(ii) for any surface type vertex group $S$, we have that $h(S)$ is non abelian if and only if $h'(S)$ is non abelian. In particular, if $\Lambda$ is a JSJ decomposition and $\sigma$ is a modular automorphism of a group $G$, then for any morphism $h: G \to G'$ the morphisms $h$ and $h \circ \sigma$ are $\Lambda$-related. 

We extend this definition as follows
\begin{defi} Let $h,h': H_1 *\ldots * H_m \to G$ be morphisms, and let $\Lambda_1, \ldots, \Lambda_m$ be graph of groups with surfaces such that $H_i = \pi_1(\Lambda_i)$. We say $h,h'$ are $(\Lambda_1, \ldots, \Lambda_m)$-related if $h\mid_{H_i}$ is $\Lambda_i$-related to $h'\mid_{H_i}$ for each $i$.
\end{defi}

We will need the following definition
\begin{defi} \label{FactorInjectiveDef} \emph{(factor injective morphisms)} Let $H$ and $G$ be hyperbolic groups, and let  $H = H_1 *\ldots * H_m$ be a (relative) Grushko decomposition for $H$. We say that a morphism $h: H \to G$ is \textbf{factor injective} if its restriction to each $H_i$ is injective, and $h(H_i)$ is not a subgroup of a conjugate of $h(H_j)$ for any $i \neq j$.
\end{defi}

\begin{lemma} \label{ComposingWithRetractionisRelated} Let $H$ and $G$ be hyperbolic groups. Suppose $H = H_0* H_1 *\ldots * H_m$ is the Grushko decomposition for $H$ relative to some tuple $\bar{u}$, where $\bar{u} \in H_0$, and suppose $\Lambda_i$ is the JSJ decomposition of $H_i$ corresponding to the tree of cylinder (relative to $\bar{u}$ if $i=0$). 

Let $h: H \to G$ be a morphism, and suppose $(G, G', r)$ is a hyperbolic floor structure for $G$ relative to $h(\bar{u})$.

If $h$ is injective on each $H_i$ then $r \circ h$ is $(\Lambda_0, \ldots, \Lambda_m)$-related to $h$. 
\end{lemma}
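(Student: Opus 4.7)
My plan is to verify the two conditions of $\Lambda_i$-relatedness factor by factor; so I fix an index $i$. Let $\Gamma$ be the graph of groups with surfaces realizing the floor $(G,G',r)$ and $T$ its Bass--Serre tree. Non-surface vertex stabilizers of $T$ are $G$-conjugates of the factors $K_1,\ldots,K_k$ of $G'=K_1*\cdots*K_k$ specified by the lifts in the definition of a floor, surface vertex stabilizers are free groups (since each underlying surface has boundary), and edge stabilizers are cyclic. Because the floor is taken relative to $h(\bar u)$, this element lies in a non-surface stabilizer and is therefore elliptic in $T$, so the action of $H_i$ on $T$ via $h|_{H_i}$ gives a cyclic splitting of $H_i$, relative to $\bar u$ when $i=0$.

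For condition $(i)$, I take a non-surface type vertex group $V$ of $\Lambda_i$ and use universal ellipticity of the tree-of-cylinders JSJ $\Lambda_i$ to conclude that $h(V)$ fixes some vertex $w$ of $T$. When $w$ can be chosen of non-surface type, its stabilizer has the form $gK_jg^{-1}$ for some $g\in G$ and some index $j$, and for any $x=gyg^{-1}\in h(V)$ with $y\in K_j\subseteq G'$ the retraction gives
\[ r(x)\;=\;r(g)\,y\,r(g)^{-1}\;=\;\bigl(r(g)g^{-1}\bigr)\,x\,\bigl(r(g)g^{-1}\bigr)^{-1}, \]
so $r\circ h|_V$ coincides with $h|_V$ after conjugation by $r(g)g^{-1}\in G$. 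If instead the fixed set of $h(V)$ in $T$ meets only surface vertices, then $h(V)$ (hence, by injectivity of $h|_{H_i}$, also $V$) embeds in a free group, and combined with rigidity in the cyclic JSJ this forces $V$ to be cyclic; the plan is then to invoke the bipartite structure of $T$, the $1$-acylindricity at surface vertices of $\Gamma$ (Remark \ref{1Acylindricity}), and the description of cyclic cylinder vertices of $\Lambda_i$ as centralizers of adjacent cyclic edge groups, to force an additional non-surface fixed vertex for $h(V)$ and reduce to the previous case.

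For condition $(ii)$, I take a surface vertex group $S$ of $\Lambda_i$. Since $h|_{H_i}$ is injective, $h(S)\cong S$ is non-abelian, so only the non-abelianness of $r(h(S))$ needs verification. If $h(S)$ lies in a non-surface stabilizer of $T$, the computation above shows that $r$ restricts to conjugation and the non-abelianness is preserved. Otherwise $h(S)$ either lies in a surface stabilizer $g\bar S_\alpha g^{-1}$ or acts without a global fixed point on $T$; in both subcases the plan is to invoke the defining floor property that $r$ has non-abelian image on every surface vertex group of $\Gamma$, together with the structural constraints on $h(S)$ coming from its action on $T$ and the fact that $h(S)$ is itself a non-abelian surface-group image, to produce a non-abelian image in $G'$.

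The main obstacle I expect is the surface-vertex subcase of condition $(i)$: ruling out the possibility that a cyclic cylinder vertex group $V$ of $\Lambda_i$ has $h(V)$ fixing only surface vertices of $T$. This requires careful use of the tree-of-cylinders structure of $\Lambda_i$ together with the $1$-acylindricity of $\Gamma$ at its surface vertices, and is the main geometric step. A secondary obstacle is propagating the non-abelian image property from the full surface vertex group $\bar S_\alpha$ to the specific non-abelian subgroup $h(S)$ in condition $(ii)$; once these reductions are carried out, the explicit conjugation formula for $r$ on conjugates of the $K_j$ supplies the witnesses of relatedness.
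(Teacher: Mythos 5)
Your proposal follows the same outline as the paper's proof: reduce to the decomposition of each $H_i$ induced by $\Gamma$ via $h$, treat non-surface vertex groups and surface vertex groups of $\Lambda_i$ separately, and for the latter compare abelianness of $h(S)$ and $r(h(S))$. Your explicit conjugation computation for the non-surface case is correct, and your identified subcase (a cyclic cylinder vertex group whose image a priori might fix only surface vertices of $T_\Gamma$) is a genuine subtlety that the paper glosses over; your plan of pushing the adjacent edge group into a non-surface stabilizer, using bipartism and $1$-acylindricity to force it into a boundary subgroup, and then noting that a cyclic subgroup of a surface group containing a boundary element is itself boundary-parallel, does close that subcase.

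The genuine gap is in condition $(ii)$, which you describe as a ``secondary obstacle'' but which is in fact the crux of the lemma. What you call ``propagating the non-abelian image property'' from the surface vertex group $\hat{S}$ of $\Gamma$ to the subgroup $h(S)$ is precisely what cannot be done for free: the floor axiom gives non-abelianness of $r(\hat{S})$, but it says nothing about $r$ restricted to an arbitrary subgroup of $\hat{S}$. The paper resolves this in two steps you do not have. First, rather than dichotomize on whether $h(S)$ is elliptic, it considers the decomposition $\Delta$ of $S$ dual to simple closed curves induced by the action on $T_\Gamma$ (citing Morgan--Shalen), and isolates a vertex group $S_1$ of $\Delta$ whose image lands in $\hat{S}$. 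Second, and crucially, it invokes Lemma 3.10 of \cite{PerinElementary} to conclude that $h(S_1)$ has \emph{finite index} in $\hat{S}$; then $r(h(S_1))$ has finite index in $r(\hat{S})$, and since $r(\hat{S})$ is a non-abelian subgroup of a torsion-free hyperbolic group it cannot be virtually abelian, so $r(h(S_1))$ is non-abelian. Without the finite-index input the argument fails: an infinite-index subgroup of $\hat{S}$ could be sent by $r$ to a cyclic group even though $r(\hat{S})$ is non-abelian. You should also note that in the complementary subcase (no piece of $\Delta$ lands in a surface vertex group), the bipartism/$1$-acylindricity argument shows all pieces fix a common non-surface vertex, so $r$ restricts to a conjugation on $h(S)$ and the abelianness criterion is tautological there.
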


\begin{proof} Denote by $\Gamma$ the decomposition associated to the hyperbolic floor $(G,G',r)$. Since $h \mid_{U_i}$ is injective, $\Gamma$ induces a decomposition for $U_i$ via $h$.
	
If $R$ is a non surface type vertex of $\Lambda_i$, which is the JSJ decomposition of $U_i$ corresponding to the tree of cylinders, it must be elliptic in this induced decomposition and cannot correspond to a surface vertex group, hence $h(R)$ lies in a non surface type vertex group of $\Gamma$. This means that up to conjugation it lies in $G'$, in particular $r$ restricts to a conjugation on $h(R)$.

If $S$ is a surface vertex group of $\Lambda_i$, its boundary subgroups are universally elliptic in cyclic $U_i$-trees. In particular they are elliptic in the decomposition of $U_i$ induced by $\Gamma$. The decomposition $\Delta$ of $S$ induced by $\Gamma$ via $h$ is thus dual to a set of disjoint simple closed curves on $S$ (see Theorem III.2.6 \cite{MorganShalen}). If no vertex group of $\Delta$ is sent by $h$ to a surface type vertex group of $\Gamma$, by bipartism of $\Gamma$ and $1$-acylindricity near surface type vertices, we see that in fact $h(S)$ lies in a single non surface type vertex of $\Gamma$. In particular $r$ restricts to a conjugation on $h(S)$, thus $r(h(S))$ is abelian iff $h(S)$ was. If some vertex group $S_1$ of $\Delta$ is sent by $h$ to a surface type vertex group $\hat{S}$ of $\Gamma$, by Lemma 3.10 in \cite{PerinElementary} the group $h(S_1)$ in fact has finite index in $\hat{S}$ and in particular $h(S_1)$ (and thus $h(S)$) is non abelian. Since $r(\hat{S})$ is not abelian by definition of a hyperbolic floor, $r(h(S_1))$ is also non abelian ($r(\hat{S})$ is a non abelian subgroup of a torsion free hyperbolic group, so it cannot be virtually abelian). Thus both $h(S)$ and $r(h(S))$ are non abelian. 
\end{proof}

We also need the following result, which will help us express factor injectivity in first-order.
\begin{prop} \label{FactorInjOrKillSomeone} Let $U, G$ be torsion free hyperbolic groups, and let $\bar{u}$, $\bar{g}$ be tuples in $U,G$ respectively. Suppose that the Grushko decomposition of $U$ relative to $\bar{u}$ is of the form $U = U_0 * \ldots * U_m$ where $\bar{u} \in U_0$ and for each $i \geq 1$, the factor $U_i$ is neither a free group, nor a closed surface group. Let $\Lambda_i$ denote the JSJ decomposition of $U_i$ (relative to $\bar{u}$ if $i=0$).

There exist finitely many elements $w_1, \ldots , w_r$ in $U$ such that if $h: U \to G$ satisfies $h(\bar{u})=\bar{g}$ and is not factor injective, then there is  a morphism $h':U \to G$ which is $(\Lambda_1, \ldots, \Lambda_m)$-related to $h$ and satisfies $h'(w_k)=1$ for some $k$.

Moreover, if some morphism $f$ from $U$ to any group satisfies $f(w_k)=1$ for some $k$, then either $f$ is non injective on one of the factors $U_i$, or $f(U_i)$ intersects non trivially $f(U_j)$ for some $i \neq j$.
\end{prop}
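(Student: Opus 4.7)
The plan is to combine a Makanin-Razborov-style analysis of $\Hom_{\bar u\to\bar g}(U,G)$ with the shortening argument of Rips-Sela, in the spirit of the analogous results of \cite{PerinSklinosHomogeneity}, and to conclude via the descending chain condition for limit groups of a torsion-free hyperbolic group (Theorem 1.12 of \cite{Sel7}).

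I would argue by contradiction. Let $\mathcal{N}$ denote the set of non-factor-injective morphisms $h:U\to G$ with $h(\bar u)=\bar g$. The group $\Mod_{\bar u}(U_0)\times\prod_{i=1}^{m}\Mod(U_i)$ acts on $\Hom_{\bar u\to\bar g}(U,G)$ by pre-composition via the free product decomposition $U=U_0*U_1*\cdots *U_m$ (with $\Mod_{\bar u}(U_0)$ the modular automorphisms of $U_0$ fixing $\bar u$); this action preserves $\mathcal{N}$ and produces $(\Lambda_1,\ldots,\Lambda_m)$-related morphisms. The desired conclusion is then that a finite family $\{w_1,\ldots,w_r\}\subset U$ meets every orbit of this action in $\mathcal{N}$, in the sense that some representative kills some $w_k$. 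If this fails, enumerate $U\setminus\{1\}=\{v_1,v_2,\ldots\}$, and for each $n$ pick $h_n\in\mathcal{N}$ such that no morphism $(\Lambda_1,\ldots,\Lambda_m)$-related to $h_n$ kills any $v_k$ with $k\le n$. Within each orbit, replace $h_n$ by a shortest representative $\tilde h_n$, minimizing $\max_{s\in S}|\tilde h_n(s)|_G$ over a fixed finite generating set $S$ of $U$.

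Next I would pass to a limit. The sequence $(\tilde h_n)$ converges, after rescaling via Bestvina-Paulin if unbounded, to an action of $U$ on an $\mathbb{R}$-tree $T$, or directly to a morphism from $U$ into a limit quotient $L_\infty = U/K_\infty$, where $K_\infty=\{u\in U:\tilde h_n(u)=1\text{ for large }n\}$. The crucial point is that $K_\infty$ must be non-trivial. Indeed, the Rips-Sela shortening argument, applied to each factor $U_i$ separately — where the hypothesis that $U_i$ for $i\ge 1$ is freely indecomposable, not free and not a closed surface group ensures $\Lambda_i$ has genuine rigid and surface-type pieces, so that $\Mod(U_i)$ is rich enough to produce non-trivial shortenings — shows that the minimality of $\tilde h_n$ would be contradicted at the limit if $K_\infty$ were trivial, given that we are avoiding killing any fixed element of $U$. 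Hence almost all $\tilde h_n$ factor through the proper quotient $U\to L_\infty$. Iterating on $L_\infty$ (itself a limit group of $G$), the descending chain condition terminates the process and produces finitely many maximal proper quotients $U\to L_1,\ldots,U\to L_r$ through which, up to modular twist, every $h\in\mathcal{N}$ factors.

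Picking a non-trivial $w_k\in\ker(U\to L_k)$ for each $k$ gives the finite list. For the moreover, tracking the combinatorial source of non-factor-injectivity through each iteration, each kernel $\ker(U\to L_k)$ is normally generated by an element of one of two types: either a non-trivial element of some $U_i$ — in which case $f(w_k)=1$ immediately forces $f|_{U_i}$ non-injective — or a commutator $[u_i,u_j]$ with $u_i\in U_i,\, u_j\in U_j$ non-trivial and $i\neq j$ — in which case $f(w_k)=1$ forces $f(u_i)$ and $f(u_j)$ to commute in the torsion-free hyperbolic group $G$, hence to share a common non-trivial power, yielding $f(U_i)\cap f(U_j)\neq\{1\}$. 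The main difficulty I expect is the shortening step: executing the Rips-Sela machinery in the relative setting with $\bar u$ fixed and all factors $U_i$ simultaneously active, while keeping track of the combinatorial origin of the failure of factor injectivity in the limit, so that the $w_k$ can be chosen to admit the moreover interpretation rather than being arbitrary kernel generators.
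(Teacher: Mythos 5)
Your approach of redoing a Makanin--Razborov/shortening analysis from scratch is both unnecessarily heavy and, as far as I can tell, does not actually deliver the ``moreover'' clause. The paper's proof is much shorter because it treats the two sources of failure of factor injectivity separately and invokes two black-box results: for non-injectivity on a single factor $U_i$ it uses Proposition 1.25 of \cite{Sel7} (and Theorem 4.4 of \cite{PerinSklinosHomogeneity} in the relative case for $U_0$) to produce finitely many elements $u^i_1,\ldots,u^i_{r_i}\in U_i$; and for the case where $h$ embeds each factor but sends $U_i$ into a conjugate of $h(U_j)$, it uses Theorem 4.1 of \cite{PerinSklinosHomogeneity} (finitely many embeddings $U_i\to U_j$ up to $\Mod(U_i)$ and conjugation) to produce finitely many elements of the form $r_i^{-1}t^i_k$ with $r_i$ a fixed non-trivial element of a rigid vertex group of $\Lambda_i$ and $t^i_k\in U_j$ a representative of the finitely many conjugacy classes of its images. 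Your plan to re-derive a list of maximal proper limit quotients $L_1,\ldots,L_r$ and pick $w_k\in\ker(U\to L_k)$ re-does the work that those cited theorems already package, with substantially more risk (in particular, the shortening argument must be run relative to the whole free product decomposition and to $\bar u$, which is exactly the content of those theorems).

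The genuine gap is in the ``moreover.'' You propose that each kernel is normally generated either by a non-trivial element of some $U_i$ or by a commutator $[u_i,u_j]$ with $u_i\in U_i$, $u_j\in U_j$. First, the claim that the kernel has such a restricted normal generating set is asserted rather than proved, and it is not at all clear why it should hold for a general limit quotient. Second, and more decisively, even if you could take $w_k=[u_i,u_j]$, the ``moreover'' quantifies over morphisms $f$ from $U$ to \emph{any} group: if $f([u_i,u_j])=1$ you only learn that $f(u_i)$ and $f(u_j)$ commute, and in an arbitrary group two commuting non-trivial elements need not share a non-trivial power, so you cannot conclude $f(U_i)\cap f(U_j)\neq\{1\}$. (Your argument explicitly relies on centralizers in torsion-free hyperbolic groups being cyclic, which is not available when the target is arbitrary.) The paper's choice $w_k=r_i^{-1}t^i_k$ with $r_i\in U_i$, $t^i_k\in U_j$ sidesteps this entirely: $f(w_k)=1$ gives $f(r_i)=f(t^i_k)$ as an \emph{equality} in $f(U_i)\cap f(U_j)$, and injectivity of $f$ on $U_i$ forces this common value to be non-trivial, in any target group. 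So the specific algebraic form of the $w_k$ is essential, not an afterthought; your sketch recognizes this as ``the main difficulty'' but does not resolve it.
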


\begin{proof}  For each factor $U_i$, there exists a finite set of elements $u^i_1, \ldots, u^i_{r_i}$ such that for any morphism $h_i: U_i \to G$ which is not injective (and sends $\bar{u}$ to $\bar{g}$ if $i=0$), there exists $h'_i$ which is $\Lambda_i$ related to $h_i$ and kills one of the $u^i_k$ (see Proposition 1.25 of \cite{Sel7}, and Theorem 4.4 in \cite{PerinSklinosHomogeneity} for the relative version). 
	
Thus if $h: U \to G$ is a morphism with $h(\bar{u})=\bar{g}$ which is not injective on some factor of $U$, there exists $h': U \to G$ which is $(\Lambda_1, \ldots, \Lambda_m)$-related to $h$ and kills one of the $u^i_k$.

Now by Theorem 4.1 of \cite{PerinSklinosHomogeneity} (which generalizes \cite[Theorem 7.1]{RipsSelaHypI}), for each pair $i \neq 0, j \neq 0$ with $i \neq j$, there are  finitely many embeddings $U_i \to U_j$ up to precomposition by an element of $\Mod(U_i)$ and postcomposition by a conjugation by an element of $U_j$. Pick a non trivial element $r_i$ in a non surface vertex group of $\Lambda_i$. Since modular automorphisms restrict to a conjugation on non surface vertex groups, the images of $r_i$ by embeddings $U_i \to U_j$ fall in finitely many conjugacy classes, we choose $t^i_1, \ldots t^i_{n_i}$ some representatives of these classes (over all values of $j$ which are distinct from $i$). If $h$ is injective on all the free factors of $U$, but sends $U_i$ into a conjugate of some $h(U_j)$, then some morphism $h'$ which coincides with $h$ up to conjugation on $U_j$ will satisfy $h'(r_i)=h'(t^i_k)$, thus $h'$ kills one of the non trivial elements $r^{-1}_i t^i_k$.

We thus take as $\{w_1, \ldots, w_r\}$ the union of the sets $\{u^i_j\}_{i,j}$ and $\{r^{-1}_i t^i_k\}_{i,k}$.

If some morphism $f: U \to \Gamma$ is injective on each of the factors $U_i$, and kills one of the $w_l$, it must kill one of the elements $r^{-1}_i t^i_k$. Then $f(r_i) = f(t^i_k)$, but since $r_i \in U_i$ and $f$ is injective on $U_i$, we have $f(r_i) \neq 1$. In that case $f(U_i)$ and $f(U_j)$ intersect non trivially, which proves the claim.
\end{proof}

The key step to prove Theorem \ref{EqualTypesImpliesIsomorphismOfCores} is
\begin{prop} \label{SameTypeImpliesFactorInjOnCores} Let $G$ be a torsion free hyperbolic group. Let $\bar{u}, \bar{v}$ be two tuples in $G$. Let $U, V$ be cores of $G$ with respect to $\bar{u}, \bar{v}$ respectively.

If $\tp^G(\bar{u}) = \tp^G(\bar{v})$, there exists a factor injective morphism $U \to V$ sending $\bar{u}$ to $\bar{v}$. 
\end{prop}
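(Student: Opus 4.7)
The plan is to adapt the Merzlyakov--Sela argument from the proof of Theorem 6.1 of \cite{PerinSklinosHomogeneity} to the present core setting. First I would write the Grushko decomposition $U = U_0 * U_1 * \cdots * U_m$ relative to $\bar{u}$, with $\bar{u} \in U_0$. Definition \ref{RelativeCoresDef} ensures no $U_i$ with $i \geq 1$ is free, and by Remark \ref{CoresHaveNoClosedSurfaces} the maximal cyclic JSJ $\Lambda_i$ of $U_i$ is not reduced to a single surface-type vertex. Proposition \ref{FactorInjOrKillSomeone} then furnishes witness elements $w_1, \ldots, w_r \in U$ with the defining property that any non--factor-injective morphism admits a $(\Lambda_0, \ldots, \Lambda_m)$-related morphism killing some $w_k$.

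Since $U$ is finitely generated in the hyperbolic group $G$, it is itself hyperbolic and finitely presented; I fix a presentation $\langle \bar{u}, \bar{s} \mid R_1, \ldots, R_n \rangle$. I would then introduce the first-order formula
$$\varphi(\bar{y}) \;\equiv\; \exists \bar{s}' \left[ \bigwedge_i R_i(\bar{y}, \bar{s}') = 1 \;\wedge\; \Psi(\bar{y}, \bar{s}') \right],$$
where $\Psi$ is a universal formula asserting that no morphism $(\Lambda_0, \ldots, \Lambda_m)$-related to the one determined by $(\bar{y}, \bar{s}')$ sends any $w_k$ to the identity. Being $(\Lambda_i)$-related is first-order expressible by quantifying over conjugators for the non-surface vertex group generators and over images of the surface vertex group generators, subject to constraints enforcing the graph-of-groups relations and the (non-)abelian status of each surface vertex group image. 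Granting that $\varphi(\bar{u})$ is witnessed in $G$ by a suitable modular shortening of the inclusion, invoking $\tp^G(\bar{u}) = \tp^G(\bar{v})$ yields $G \models \varphi(\bar{v})$, hence a morphism $h \colon U \to G$ with $h(\bar{u}) = \bar{v}$ such that no morphism $(\Lambda_i)$-related to $h$ kills any $w_k$; the contrapositive of Proposition \ref{FactorInjOrKillSomeone} then produces the factor injectivity of $h$.

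To descend from $G$ to $V$, I would invoke the core property: $V$ is a tower retract of $G$ relative to $\bar{v}$, giving a sequence of hyperbolic floors $G = G^0 > G^1 > \cdots > G^M = V * F'$ with retractions $r_k \colon G^k \to G^{k+1}$. Writing $\rho = r_{M-1} \circ \cdots \circ r_0$ and $p \colon V * F' \to V$ for the projection killing $F'$, set $\tilde{h} = p \circ \rho \circ h$. Applying Lemma \ref{ComposingWithRetractionisRelated} iteratively through the floors, each step $h_{k+1} = r_k \circ h_k$ is $(\Lambda_i)$-related to $h_k$, so transitivity of the relation together with Proposition \ref{FactorInjOrKillSomeone} preserves factor injectivity down to $\rho \circ h$. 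For the final projection $p$, the Kurosh subgroup theorem forces the image of each non-free $U_i$ (for $i \geq 1$) to lie in a conjugate of $V$ within $V * F'$, so $p$ acts as a conjugation on it; injectivity on $U_0$ follows from the fact that $\bar{v} = h(\bar{u}) \in V$ together with the factor injectivity already established. Thus $\tilde{h} \colon U \to V$ is factor injective with $\tilde{h}(\bar{u}) = \bar{v}$, as required.

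The main obstacle I expect is precisely the step of verifying that the formula $\varphi(\bar{u})$ is witnessed in $G$: the inclusion itself is factor injective, but $(\Lambda_i)$-relatedness permits enough freedom on surface vertex groups that some related morphism may in principle kill a witness element. Overcoming this is the content of the Sela-style shortening argument underpinning \cite{PerinSklinosHomogeneity}: a shortened (with respect to $\Mod(U)$) representative of the inclusion's modular orbit is rigid enough that all $(\Lambda_i)$-related morphisms retain the required non-killing property on the $w_k$'s, so that $\varphi(\bar{u})$ can indeed be witnessed. Once this step is settled, the remainder of the proof is essentially routine bookkeeping with Lemma \ref{ComposingWithRetractionisRelated} and Proposition \ref{FactorInjOrKillSomeone}.
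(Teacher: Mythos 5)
Your proposal is logically the mirror image of the paper's: you want to exhibit a witness for the $\exists\forall$ formula $\varphi(\bar{u})$ (``some morphism $U\to G$ fixing $\bar u$ has no $(\Lambda_i)$-related morphism killing any $w_k$'') and transfer it to $\bar v$, whereas the paper argues by contradiction, assuming no factor-injective $U\to V$ exists, deducing the \emph{negation} $\phi(\bar v)$ of your formula, transferring it to $\bar u$, and then refuting it. The two routes are formally equivalent, so the crux is the same in both: one must show that $G\models\varphi(\bar u)$, i.e. that $\phi(\bar u)$ fails.

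This is exactly where your proposal has a genuine gap, and you have misjudged where the weight of the argument lies. You assert that $\varphi(\bar u)$ is witnessed by ``a suitable modular shortening of the inclusion,'' which is ``rigid enough'' that related morphisms do not kill the $w_k$, and you relegate the rest to ``routine bookkeeping.'' But no shortening of the inclusion map $U\hookrightarrow G$ with respect to $\Mod(U)$ can create such a witness: precomposing an embedding by a modular automorphism gives another embedding, not a proper quotient, so ``shortening'' does not change the orbit of related morphisms, and there is no rigidity statement of the kind you invoke. The actual mechanism is entirely different and is precisely what you dismiss as bookkeeping: if \emph{every} morphism $U\to G$ fixing $\bar u$ had a related morphism killing a $w_k$, one applies this to the identity, composes repeatedly with the floor retractions of the tower $G>\dots>U*\F$ using Lemma~\ref{ComposingWithRetractionisRelated}, lands on a morphism $f'\colon U\to U$ that is $(\Lambda_i)$-related to the identity and kills some $w_k$, analyses $f'$ using the ``moreover'' clause of Proposition~\ref{FactorInjOrKillSomeone} together with relatedness (which forces the image of each non-surface vertex group back into a conjugate of itself) to conclude $f'$ is non-injective on some factor $U_i$, obtains a non-injective preretraction, and finally applies Theorems~5.9 and~5.10 of \cite{PerinSklinosHomogeneity} to produce a hyperbolic floor structure for $U_i$ relative to $\bar u$ --- contradicting item~(3) of Definition~\ref{RelativeCoresDef}. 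None of this is supplied by your proposal; without it, the claim that $\varphi(\bar u)$ holds is unsupported. In short, you have correctly identified the obstacle but then appealed to the wrong tool to remove it, and the ``remainder'' you call routine is in fact the entire content of the proof.

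Two smaller remarks. Your descent step from $G$ to $V$ via $p\circ\rho\circ h$ is sound and is analogous to the first step of the paper's argument; note though that Lemma~\ref{ComposingWithRetractionisRelated} requires injectivity on the factors at each stage, so factor injectivity of $\rho\circ h$ is obtained by the transitivity-and-contrapositive argument you sketch rather than being automatic. And the paper's reason for preferring the contradiction form is that the hypothesis ``no factor-injective $U\to V$'' is directly usable to establish $\phi(\bar v)$ (Step~1 of the paper), whereas your direct approach gives no foothold until the preretraction machinery is invoked.
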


\begin{proof} To prove the claim, we proceed by contradiction: we will assume that $\tp^G(\bar{u}) = \tp^G(\bar{v})$, but that there is no factor injective morphism $U \to V$ sending $\bar{u}$ to $\bar{v}$, and get a contradiction to the fact that $U$ is a core.

Let $U = U_0 * \ldots * U_m$ be the Grushko decomposition of $U$ relative to $\bar{u}$, where $\bar{u} \in U_0$ and for each $i \geq 1$, let $\Lambda_i$ denote the JSJ decomposition of $U_i$ (relative to $\bar{u}$ if $i=0$).	
	
\paragraph{Step 1: Show that for any morphism $h: U \to G$ sending $\bar{u}$ to $\bar{v}$, there exists $h: U \to G$ which is $(\Lambda_0, \ldots, \Lambda_m)$-related to $h$ and is not factor injective.}	
	
Let $h: U \to G$ be a morphism sending $\bar{u}$ to $\bar{v}$. If $h$ itself is not factor injective, we are done. If $h$ is factor injective, and if  $(G, G^1, r_1)$ denotes the first floor of the structure of hyperbolic tower that $G$ admits over $V$, then by Lemma \ref{ComposingWithRetractionisRelated}, the composition $r_1 \circ h$ is $(\Lambda_0, \ldots, \Lambda_m)$-related to $h$. If $r_1 \circ h$ itself is factor injective, we keep going. If not, we set $h' = h \circ r_1$. By induction, and by our hypothesis that there is no factor injective morphism $U \to V$ sending $\bar{u}$ to $\bar{v}$ we see that eventually we get a morphism $h'$ which is $(\Lambda_0, \ldots, \Lambda_m)$-related to $h$ and is not factor injective.   

\paragraph{Step 2: Express this as a first-order formula $\phi(\bar{x})$ that $\bar{v}$ satisfies.}

By Proposition \ref{FactorInjOrKillSomeone}, there are elements $w_1, \ldots, w_r$ of $U$ such that for any morphism $h: U \to G$ sending $\bar{u}$ to $\bar{v}$ there is a $(\Lambda_0, \ldots, \Lambda_m)$-related morphism $h': U \to G$ which kills one of the elements $w_1, \ldots, w_r$.

We can write down a first-order sentence $\phi(\bar{v})$ whose interpretation on $G$ expresses precisely this. 

\paragraph{Step 3: Find a morphism $U \to G$ with $\bar{u} \mapsto \bar{u}$ which is $(\Lambda_0, \ldots, \Lambda_m)$-related to the identity and non injective on one of the factors $U_i$.}

Since $\tp^G(\bar{u}) = \tp^G(\bar{v})$, we must have $G \models \phi(\bar{u})$, that is, for any morphism $h: U \to G$ fixing $\bar{u}$ there is a $(\Lambda_0, \ldots, \Lambda_m)$-related morphism which kills one of the elements $w_1, \ldots, w_r$. We apply this to the identity embedding of $U$ in $G$ to get a morphism $f: U \to G$ which is $(\Lambda_0, \ldots, \Lambda_m)$-related to the identity and kills one of the $w_i$. We now successively compose $f$ with the floor retractions between $G$ and $U$ and prove that at some point we must get a morphism with the required properties.

If $f$ is non injective on one of the factors $U_i$, we are done. Suppose thus that $f$ is injective on each of the $U_i$. Let $r_1$ be the retraction of the first floor structure of $G$ over $U$: by Lemma \ref{ComposingWithRetractionisRelated}, the morphism $r_1 \circ f$ is still $(\Lambda_0, \ldots, \Lambda_m)$-related to the identity, and kills one of the elements $w_i$. If $(r_1 \circ f)\mid_{U_i}$ is non injective for some $i$, we are done. If not, we repeat the process by composing it with the next floor retraction.

If the process doesn't stop until the last floor structure, we get a morphism $f': U \to U$ which is $(\Lambda_0, \ldots, \Lambda_m)$-related to the identity, and kills one of the elements $w_i$. If $f'$ is injective on each of the $U_i$, by the moreover part of Proposition \ref{FactorInjOrKillSomeone} there exist distinct indices $i,j$ such that some conjugates of $f(U_i)$ and $f(U_j)$ intersect. But let $R_i$ be a non surface type vertex of $\Lambda_i$ (there is such a vertex by Remark \ref{CoresHaveNoClosedSurfaces}): since $f'$ is $(\Lambda_0, \ldots, \Lambda_m)$-related to the identity, we have that $f(R_i)$ is a conjugate of $R_i$, in particular it is contained in a conjugate of $U_i$. Since $f'(U_i) \simeq U_i$ is freely indecomposable, $f'(U_i)$ is in fact contained in some conjugate of $U_i$. Similarly, $f(U_j)$ lies in a conjugate of $U_j$. Thus the restriction of $f$ to one of the factors $U_1, \ldots, U_m$ must be non injective.

\paragraph{Step 4: Deduce that $U$ is not a core.}
Therefore, for some $i$, the  morphism $f \mid_{U_i}$ is a non injective preretraction $r: U_i \to G$ relative to $\Lambda_i$ (see Remark 6.5 of \cite{PerinSklinosHomogeneity}). Now by applying Theorem 5.10 of \cite{PerinSklinosHomogeneity} repeatedly, we get a non injective preretraction $U_i \to U_i$ relative to $\Lambda_i$. Finally, by applying Theorem 5.9 of \cite{PerinSklinosHomogeneity}, we see that $U_i$ must admit a structure of hyperbolic floor (in whose corresponding floor decomposition  $\bar{u}$ is elliptic if $i=0)$. This structure can be extended to a structure of hyperbolic floor for $U$ relative to $\bar{u}$, which contradicts the fact that $U$ is a core.
\end{proof}

We can now complete the proof of Theorem \ref{EqualTypesImpliesIsomorphismOfCores}.
\begin{proof}Let $U = U_0 * \ldots * U_m$ be the Grushko decomposition of $U$ relative to $\bar{u}$, where $\bar{u} \in U_0$ and let $V = V_0 * \ldots * V_n$ be the Grushko decomposition of $V$ relative to $\bar{v}$, where $\bar{v} \in V_0$.

By Proposition \ref{SameTypeImpliesFactorInjOnCores}, there exists a factor injective morphism $f:U \to V$ sending $\bar{u}$ to $\bar{v}$. By symmetry, there exists a factor injective morphism $h:V \to U$ sending $\bar{v}$ to $\bar{u}$. 

Since $f$ is injective on the factors $U_i$, the image $f(U_i)$ is freely indecomposable and non cyclic, hence it must be contained in a conjugate of one of the factors $V_j$. Similarly for any $j$ we have that $h(V_j)$ is contained in one of $U_1, \ldots, U_m$.

Now there are finitely many factors so for some integer $k$, the morphism $(h \circ f)^k$ embeds one of the factors $U_i$ in a conjugate of itself, and fixes $\bar{u}$ if $i=0$. By (relative) co-Hopf property, it must be an isomorphism, hence all the factors involved in the successive images of $U_i$ under the homomorphisms $(h\circ f)^s$ for $s\leq k$ are isomorphic, and up to reordering and composing $f$ by a conjugation on each factor, we may assume that $f$ sends $U_0 * U_1 * \ldots * U_l$ isomorphically onto $V_0*\ldots *V_l$ for some $l \leq m$. Now the image by $f$ of any other factor $U_j$ for $j>l$ cannot be sent into a conjugate of a factor $V_k$ for $k<l$, since $V_k =f(U_i)$ for some $i \leq l$, and this would contradict the fact that $f$ is factor injective.  We can thus repeat the argument for the restriction of $f$ to $U_{l+1}* \ldots * U_m$ and the restriction of $h$ to $V_{l+1}* \ldots *V_n$.  
\end{proof}

We get the following corollary
\begin{cor} \label{HomogeneityIsExtendingAut} Let $G$ be a torsion free hyperbolic group.
If any isomorphism $f: U \to V$ between two tower retracts of $G$ extends to an automorphism of $G$, then $G$ is homogeneous.
\end{cor}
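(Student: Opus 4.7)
The plan is to reduce, via Theorem \ref{EqualTypesImpliesIsomorphismOfCores}, the homogeneity question to producing an isomorphism between two tower retracts of $G$ sending $\bar u$ to $\bar v$, and then invoke the hypothesis directly. So fix $\bar u, \bar v \in G$ with $\tp^G(\bar u) = \tp^G(\bar v)$; we seek $\theta \in \Aut(G)$ with $\theta(\bar u) = \bar v$. Choose cores $U, V$ of $G$ with respect to $\bar u, \bar v$ respectively (these exist by the paragraph following Definition \ref{RelativeCoresDef}). Theorem \ref{EqualTypesImpliesIsomorphismOfCores} supplies an isomorphism $\phi\colon U \to V$ with $\phi(\bar u) = \bar v$, and condition (1) of Definition \ref{RelativeCoresDef} gives free groups $F_U, F_V$ such that $U * F_U$ and $V * F_V$ are tower retracts of $G$ (relative to $\langle \bar u\rangle$ and $\langle \bar v\rangle$, hence in particular absolute tower retracts in the sense of the hypothesis).

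If $\phi$ can be extended to an isomorphism $\hat\phi\colon U * F_U \to V * F_V$, the proof is finished: the hypothesis produces $\theta \in \Aut(G)$ extending $\hat\phi$, and then $\theta(\bar u) = \hat\phi(\bar u) = \phi(\bar u) = \bar v$. Such an extension exists precisely when $\rk F_U = \rk F_V$, in which case one glues $\phi$ to any isomorphism $F_U \to F_V$. The remaining task is thus the rank equality, and this is the one step I expect to require real work.

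To establish $\rk F_U = \rk F_V$, my plan is to argue that this common rank is an invariant of the pair $(G, \langle \bar u\rangle)$ up to the isomorphism $\phi$. Concretely, I would track the free rank of the Grushko decomposition relative to the distinguished tuple through each floor retraction appearing in the sequence $G = G_0 > G_1 > \ldots > G_m = U * F_U$, using that each hyperbolic floor is governed by a graph of groups with surfaces in which the retracted subgroup is elliptic, to show that the free rank of $U * F_U$ relative to $\bar u$ is determined by that of $G$ relative to $\bar u$. The same analysis on the $V$-side, combined with $\phi(\bar u) = \bar v$, then forces the equality. As a backup, any leftover discrepancy can be absorbed by attaching additional trivially stabilized edges at the bottom floor of the smaller tower retract, along the lines of the construction used in the proof of Proposition \ref{TypesOfFactorOfSubtowerArePreserved}, which enlarges the free factor while preserving both the core and the property of being a tower retract of $G$. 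Once the ranks are aligned, the corollary follows immediately from the hypothesis.
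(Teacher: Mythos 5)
Your overall strategy -- invoke Theorem \ref{EqualTypesImpliesIsomorphismOfCores} to get an isomorphism $\phi\colon U \to V$ of cores sending $\bar u$ to $\bar v$, then feed it to the hypothesis -- is exactly the paper's. The paper's own proof is one sentence: it lets $U,V$ be cores, obtains $\phi$ from Theorem \ref{EqualTypesImpliesIsomorphismOfCores}, and says ``by assumption it extends to an automorphism.'' In other words, the paper treats $U$ and $V$ directly as tower retracts, whereas you correctly point out that condition (1) of Definition \ref{RelativeCoresDef} only makes $U*\F_U$ and $V*\F_V$ tower retracts. You have spotted a genuine imprecision that the paper glosses over; that is good reading.

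Where your writeup itself has a gap is in your resolution of that imprecision. You reduce the extension problem to $\rk \F_U = \rk \F_V$, which is correct, but you do not actually prove it. Your main plan, ``track the free rank of the Grushko decomposition through each floor retraction,'' is only a plan: the two cores come from \emph{different} chains of floors, and nothing you write explains why the free rank of the bottom of the tower is an invariant of the pair $(G,\bar u)$ rather than of the particular chain of floor retractions chosen. This is not obviously true -- Euler characteristic of the floor decomposition depends on the surfaces and on the number of non-surface vertices, so the na\"ive bookkeeping does not pin down the rank -- and would need to be carefully argued (or derived from the more detailed structure results such as Proposition \ref{SubtowersUnderFirstTwoCond}, which themselves carry hypotheses beyond those of the corollary). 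Your backup plan is also not usable as stated: the construction in the proof of Proposition \ref{TypesOfFactorOfSubtowerArePreserved} adds trivially stabilized edges to get a \emph{new} ambient group $\hat G = G*\langle a_1,\ldots,a_k,z\rangle$, so it enlarges $G$ rather than changing which subgroup of $G$ is the bottom of the tower. Since the corollary asks for an automorphism of $G$ itself, that device does not apply. So either prove the rank equality, or (perhaps more faithful to the authors' intent) read the hypothesis as applying to the free factors $U,V$ of tower retracts and note that under that reading the argument closes exactly as in the paper.
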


\begin{proof} Let $\bar{u}, \bar{v}$ be tuples satisfying the same type in $G$, and let $U, V$ be cores for $G$ relative to $\bar{u}, \bar{v}$ respectively. By Theorem \ref{EqualTypesImpliesIsomorphismOfCores}, there is an isomorphism $U \to V$ sending $\bar{u}$ to $\bar{v}$, by assumption it extends to an automorphism $G \to G$ and thus we are done.
\end{proof}

If all tower retracts were elementarily embedded, the converse would hold, but we know this is not the case in general. However for us the following criterion for non homogeneity will be sufficient.

\begin{prop} \label{CriterionForNonHomogeneity} Let $(G, U, r_U)$ and $(G, V, r_V)$ be two hyperbolic floor structures.  Let $U=U_1* \ldots *U_l * F_U$ and $V = V_1* \ldots * V_m*F_V$ be the Grushko decompositions of $U$ and $V$ respectively.
	
Suppose that for each $i$ the factors $U_i$ and $V_i$ are isomorphic, and suppose there exists an isomorphism  $g$ either between $U_1$ and $V_1$, or between a free factor of $F_U$ and a free factor of $F_V$, which  fails to extend to an automorphism of $G$. Then $G$ is not homogeneous.
\end{prop}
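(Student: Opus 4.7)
Here is my plan. The strategy is to exhibit two tuples $\bar u, \bar v \in G$ realizing the same type in $G$ but lying in distinct orbits under $\Aut(G)$, which directly contradicts homogeneity.

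I treat both alternatives of the hypothesis uniformly. Let $W$ denote either $U_1$ (first case) or the prescribed cyclic free factor of $F_U$ (second case), and let $W'$ denote the corresponding factor of $V$ (so $W' = V_1$ or the relevant free factor of $F_V$). Pick a tuple $\bar u$ generating $W$ (a single element if $W$ is cyclic), and set $\bar v := g(\bar u)$, so that $\bar v$ generates $W'$. The first step is to check that $\bar u$ and $\bar v$ lie in distinct $\Aut(G)$-orbits: if some $\phi \in \Aut(G)$ satisfied $\phi(\bar u) = \bar v$, then $\phi$ would send the subgroup $W = \langle \bar u \rangle$ onto $W' = \langle \bar v \rangle$, and since $\bar u$ generates $W$ the restriction $\phi|_W : W \to W'$ is uniquely determined by its values on $\bar u$; it therefore must coincide with $g$, so $\phi$ would extend $g$ to an automorphism of $G$, contradicting the hypothesis.

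The second step is to show that $\tp^G(\bar u) = \tp^G(\bar v)$. Apply Proposition \ref{TypesOfFactorOfSubtowerArePreserved} to the floor $(G, U, r_U)$ with $W$ taken as the distinguished factor $H_0$ in a sufficiently refined free decomposition of $U$; this yields $\tp^{U*\Z}(\bar u) = \tp^G(\bar u)$, and symmetrically $\tp^{V*\Z}(\bar v) = \tp^G(\bar v)$. It then suffices to construct an isomorphism $\Phi: U * \Z \to V * \Z$ sending $\bar u$ to $\bar v$. I build $\Phi$ factor-by-factor on the Grushko decomposition: use $g$ on $W$; on each remaining non-cyclic Grushko factor of $U$ use the isomorphism $U_i \simeq V_i$ provided by hypothesis; and on the remaining cyclic factors (those of $F_U$ distinct from $W$, together with the extra $\Z$ summand on the left) use any isomorphism of free groups of the appropriate rank onto the corresponding cyclic factors of $F_V$ together with its extra $\Z$. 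This gives $\tp^{U*\Z}(\bar u) = \tp^{V*\Z}(\bar v)$ and hence $\tp^G(\bar u) = \tp^G(\bar v)$.

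The principal point needing care is the construction of $\Phi$, which relies on the Grushko decompositions of $U$ and $V$ being of the same shape (in particular $l = m$ and $\mathrm{rk}(F_U) = \mathrm{rk}(F_V)$). This is implicit in the hypothesis that $U_i \simeq V_i$ for each $i$ together with the context in which the proposition is applied. A secondary subtlety is checking that Proposition \ref{TypesOfFactorOfSubtowerArePreserved} still applies when $H_0$ is taken to be the Grushko factor $W$ rather than an a priori prescribed vertex group of the floor decomposition; this follows from the fact that the proof of that proposition only exploits the free product structure, so one may always refine the floor decomposition of $(G, U, r_U)$ to have $W$ as one of its non-surface vertex groups.
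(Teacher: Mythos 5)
Your overall strategy matches the paper's: produce tuples $\bar u, \bar v$ with $\tp^G(\bar u)=\tp^G(\bar v)$ but in different $\Aut(G)$-orbits, with the type equality obtained by passing through $\tp^{U*\Z}(\bar u)$ and $\tp^{V*\Z}(\bar v)$ and then invoking Proposition \ref{TypesOfFactorOfSubtowerArePreserved}. The orbit argument and the application of that proposition (after refining the floor decomposition so that the relevant factor is a vertex group) are fine.

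The gap is in your claim that one can build an isomorphism $\Phi: U*\Z \to V*\Z$. That requires $U*\Z \cong V*\Z$, i.e.\ $\mathrm{rk}(F_U)=\mathrm{rk}(F_V)$, and this is \emph{not} implied by the hypotheses: the statement only assumes $U_i\cong V_i$ for the non-cyclic Grushko factors, and says nothing about the free parts. Nor is it implicit from the floor structures — two different hyperbolic floor retractions of the same $G$ (arising, say, from different floor-retracting sets of surfaces, as in Section \ref{ExtraSymmetries}) have no reason to produce free groups of matching rank. Your appeal to ``the context in which the proposition is applied'' is exactly the move a self-contained proof of the proposition cannot make. The paper sidesteps this by never comparing $U*\Z$ and $V*\Z$ directly. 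Instead it sets $U'=U_1*\ldots*U_l$ and $V'=V_1*\ldots*V_l$, notes that $U'\cong V'$ via an isomorphism sending $\bar u$ to $\bar v=g(\bar u)$ (which gives $\tp^{U'}(\bar u)=\tp^{V'}(\bar v)$), and then uses that $U*\Z=U'*(F_U*\Z)$ is a hyperbolic tower over the non-cyclic $U'$, so $U'\preceq U*\Z$ by Theorem \ref{ElementarySubgroups} (and likewise $V'\preceq V*\Z$). This yields $\tp^{U*\Z}(\bar u)=\tp^{U'}(\bar u)=\tp^{V'}(\bar v)=\tp^{V*\Z}(\bar v)$ with no constraint on the ranks of $F_U$ and $F_V$. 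You should replace the construction of $\Phi$ with this elementary-embedding argument (taking $U'*\langle\bar u\rangle$ in place of $U'$ in the cyclic case, to keep the relevant subgroup non-cyclic and containing $\bar u$).
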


\begin{proof}Assume $g$ is an isomorphism between $U_1$ and $V_1$, the proof in the second case is similar. Let $\bar{u}$ be a generating set for $U_1$. Let $U'=U_1* \ldots * U_{s}$ and $V'=V_1* \ldots * V_{s}$. 

Then $\tp^{U'}(\bar{u}) = \tp^{V'}(g(\bar{u}))$ (since there is an automorphism $U' \to V'$ sending $\bar{u}$ to $g(\bar{u})$). Now $U$ is the free product of $U'$ with a free group so $\tp^{U'}(\bar{u}) = \tp^{U*\Z}(\bar{u})$. Similarly $\tp^{V'}(g(\bar{u})) = \tp^{V*\Z}(g(\bar{u}))$.

By Proposition \ref{TypesOfFactorOfSubtowerArePreserved} we have in fact $\tp^{U*\Z}(\bar{u}) = \tp^G(\bar{u})$ and  $\tp^{V*\Z}(g(\bar{u})) = \tp^G(g(\bar{u}))$, so $\bar{u}$ and $g(\bar{u})$ have the same type in $G$. However, there is no automorphism of $G$ sending $\bar{u}$ to $g(\bar{u})$.
\end{proof}

Our strategy is now to identify from the JSJ of $G$ whether such floor structures exist.

\section{First obstruction to homogeneity: floor-retracting subsurfaces} \label{RetractingSubsurfaces}
\label{RetractingSubsurfacesSection}

The first obstruction to homogeneity of a group $G$ is the existence of a proper subsurface in one of the surfaces of the JSJ decomposition of $G$ which also appears in a decomposition associated to a floor structure of $G$. 

\subsection{Floor-retracting surfaces in a graph of groups - definition}

We first define a notion of floor retraction for (sets of) surfaces of a graph of groups. 

\begin{defi} \label{RetractingSurface} (floor-retracting set of surfaces) Let $\Lambda$ be a graph of groups with surfaces with fundamental group $G$. Let $W=\{v_1, \ldots, v_r\}$ be a set of surface type vertices with associated surfaces $\Sigma_1, \ldots, \Sigma_r$. Denote by $\Lambda_{W}$ the graph of groups with surfaces obtained by collapsing all edges not adjacent to one of the $v_i$.

If there exists an extended hyperbolic floor $(G, H,r)$ with associated graph of groups $\Lambda_{W}$, we say that \textbf{the set of surfaces $\Sigma_1, \ldots, \Sigma_r$ floor-retracts} in $\Lambda$.

If the set of surfaces $\{\Sigma \}$ floor-retracts in $\Lambda$ we simply say that $\Sigma$ floor-retracts in $\Lambda$.
\end{defi}

\begin{rmk} \label{HypFloorSurfacesRetract} Suppose $(G,G',r)$ is an extended hyperbolic floor with associated decomposition $\Lambda$. Any set of surfaces of $\Lambda$ floor-retracts in $\Lambda$. 
\end{rmk}

Let us have a closer look at what happens when a set of surfaces floor-retracts in a graph of groups.

\begin{defi} Suppose $\Sigma_1, \ldots, \Sigma_r$ is a set of surfaces which floor-retracts in a graph of groups $\Lambda$ with fundamental group $G$. By definition, there exists a retraction $r: G \to H = H_1 * \ldots * H_p$ (respectively $G*\Z \to H_1*\Z$ if $p=1$ and $H_1$ is cyclic). The restriction of $r$ to each $S_j=\pi_1(\Sigma_j)$ induces an action of the surface group $\pi_1(\Sigma_j$) on the tree corresponding to the free splitting $H_1 * \ldots * H_p$ in which the boundary subgroups are elliptic. This gives (for example by Lemma 3.4 in \cite{PerinElementary}) a collection ${\cal C}$ of simple closed curves on the surfaces $\Sigma_j$ whose corresponding elements are killed by $r$ - we may assume this collection is in fact maximal for this property. Thus $r$ factors as $r' \circ \rho_{\cal C}$ where $\rho_{\cal C}$ is the "pinching map" whose kernel is normally generated by the elements corresponding to ${\cal C}$.

Each group $\rho_{\cal C}(\pi_1(\Sigma_j))$ admits a graph of groups decomposition $\Gamma(\Sigma_j,{\cal C})$ with trivial edge stabilizers, and vertex stabilizers which are the fundamental groups of the connected components $\Sigma^1_j, \ldots , \Sigma^{m_j}_j$ obtained by cutting $\Sigma$ along the curves of ${\cal C}$ and gluing disks on the boundary thus created - see Figure \ref{PinchingFig}.
\end{defi}

\begin{figure}[ht!] \label{PinchingFig}
	\centering
	\includegraphics[width=.9\textwidth]{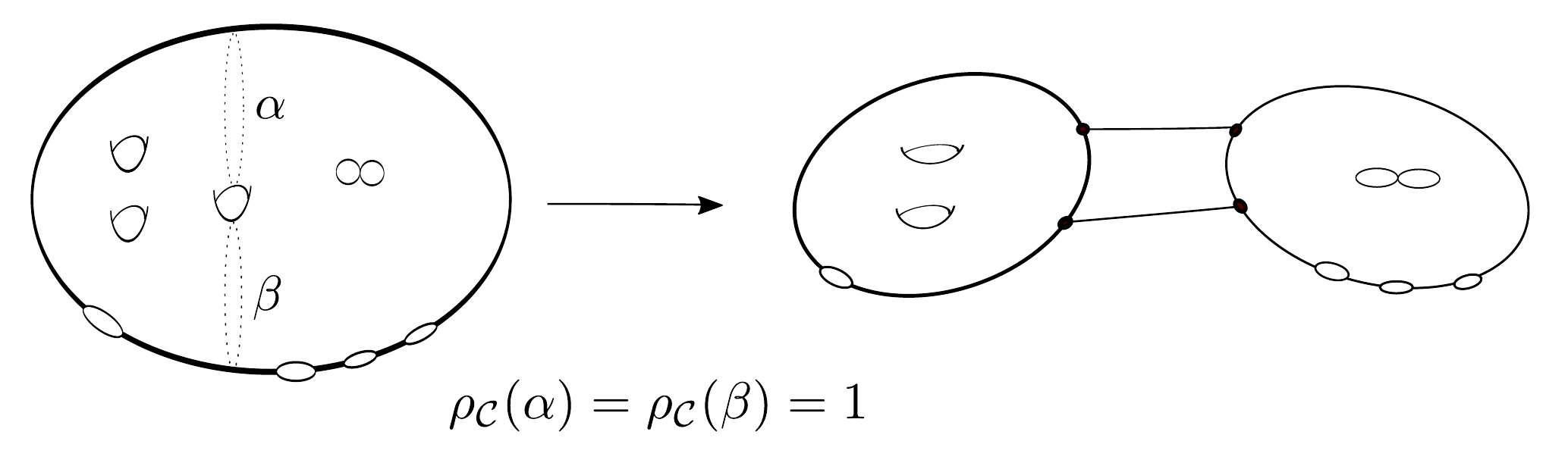}
	\caption{The construction of the graph of groups $\Gamma(\Sigma,{\cal C})$}
\end{figure}

Note that by construction each vertex group of $\Gamma(\Sigma_j,{\cal C})$ has image by $r'$ which lies in one of the factors $H_i$, and that $r'$ does not kill any element corresponding to a simple closed curve on one of the surface $\Sigma^k_j$.

\begin{rmk} In general, it is not true that if each one of the surfaces $\Sigma_1, \ldots, \Sigma_r$ floor-retracts in $\Lambda$ then the set $\{ \Sigma_1, \ldots, \Sigma_r \}$ floor-retracts: for example, if $\Lambda$ consists of one non surface type vertex $w$ with vertex group $G_w$, and two distinct surface type vertices with associated surfaces $\Sigma, \Sigma'$ which are both a once punctured torus whose boundary subgroups are identified to the same infinite cyclic subgroup $\langle z \rangle$ of $G_w$, then clearly both $\Sigma$ and $\Sigma'$ retract in $\Lambda$. However if $z$ is not a commutator in $G_w$, the set $\{\Sigma, \Sigma'\}$ does not retract in $\Lambda$.
\end{rmk}

But it is possible to show that this is essentially the only obstruction. Since we won't use this in the sequel we leave this to the interested reader to figure out. We will need however the following weaker version:

\begin{lemma} \label{SurfacesCanRetractTogether} Suppose that $\{\Sigma_1, \ldots, \Sigma_p\}$ is a floor-retracting set of surfaces in a graph of groups with surfaces $\Lambda$, and denote by $\{v_1, \ldots, v_p\}$ the corresponding surface type vertices. 

Suppose $\Sigma$ is a surface of $\Lambda$ whose corresponding vertex lies in a connected component $\Lambda_0$ of $\Lambda - \{v_1, \ldots, v_p\}$, and which floor-retracts in $\Lambda_0$. 

Then $\{\Sigma_1, \ldots, \Sigma_p, \Sigma \}$ floor-retracts in $\Lambda$.
\end{lemma}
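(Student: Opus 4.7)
I would combine the two given floor retractions by composition. Write the first as $r_W : G \to K$ where $K = K_1 * \cdots * K_q$ is the free product of the non-surface-type vertex groups of $\Lambda_W$; relabel so that $K_1 = \pi_1(\Lambda_0)$. Write the second as $r_0 : K_1 \to L$ where $L = L_1 * \cdots * L_s$ is the free product of the non-surface-type vertex groups of $(\Lambda_0)_{\{v\}}$. Extend $r_0$ to $\hat{r}_0 : K \to L * K_2 * \cdots * K_q$ by applying $r_0$ on $K_1$ and the identity on each $K_j$ with $j \geq 2$, and set $\rho := \hat{r}_0 \circ r_W$. As a composition of retractions, $\rho$ is a retraction from $G$ onto $L * K_2 * \cdots * K_q$.

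I would then argue that $(G, L * K_2 * \cdots * K_q, \rho)$ is an extended hyperbolic floor with associated graph of groups $\Lambda_{W \cup \{v\}}$. First, $\Lambda_{W \cup \{v\}}$ is obtained by replacing, inside $\Lambda_W$, the non-surface-type vertex corresponding to $\Lambda_0$ by the bipartite subgraph $(\Lambda_0)_{\{v\}}$, so its non-surface-type vertices are those of $\Lambda_W$ other than this one, together with those of $(\Lambda_0)_{\{v\}}$: the free product of their vertex groups is exactly the target of $\rho$. Bipartism of $\Lambda_{W \cup \{v\}}$ follows from bipartism of $\Lambda_W$ and of $(\Lambda_0)_{\{v\}}$, together with the observation that since $\Sigma$ is a surface of $\Lambda_0$, all its boundary subgroups correspond to edges of $\Lambda_0$ adjacent to $v$, hence $v$ is not adjacent in $\Lambda$ to any $v_i$. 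The Euler characteristic condition on surfaces is inherited from both given floor structures.

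The key verification is that $\rho$ has non-abelian image on every surface vertex group (with the usual adaptation to $*\Z$ when the target is cyclic). For $S = \pi_1(\Sigma)$ this is immediate: $r_W$ is the identity on $S \leq K_1$ so $\rho(S) = r_0(S)$, which is non-abelian by the second floor structure. I expect the main obstacle to be the corresponding statement for $S_i = \pi_1(\Sigma_i)$ when $i \leq p$: $r_W(S_i)$ is non-abelian in $K$ by the first floor structure, but $\hat{r}_0$ might in principle collapse it. The key observation here should be that the boundary subgroups of $\Sigma_i$ are preserved by $\rho$ — they are fixed by $r_W$, and those landing in $K_1$ in fact lie in some $L_k$ (by the bipartism argument above), hence are fixed by $r_0$ — so together with the images of the handle generators, they generate a non-abelian subgroup of $L * K_2 * \cdots * K_q$, reflecting the non-abelian image of $S_i$ in $K$ through the natural compatibility of the two floor structures.
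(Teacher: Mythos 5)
Your high-level strategy — compose the two floor retractions $r_W$ and $\hat{r}_0$ and check that $(G,\,L*K_2*\cdots*K_q,\,\rho)$ is an extended hyperbolic floor with decomposition $\Lambda_{W\cup\{v\}}$ — is exactly the paper's, and your preliminary observations (that $v$ cannot be adjacent to any $v_i$, whence bipartism of $\Lambda_{W\cup\{v\}}$; that $\rho(S)=r_0(S)$ is non-abelian) are correct and appear in the paper.

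However, the verification of non-abelianity of $\rho(S_j)$ for $j\le p$, which you yourself flag as ``the main obstacle,'' is genuinely incomplete, and this is the heart of the proof. The fact that the boundary subgroups of $\Sigma_j$ are fixed (up to conjugation) by $\rho$ does not by itself force $\rho(S_j)$ to be non-abelian. Concretely, if $v_j$ is adjacent to the single component $\Lambda_0$, then $r_W(S_j)$ sits entirely in $K_1=\pi_1(\Lambda_0)$, and the composition with $r_0$ can in principle collapse it to an abelian group. For instance, after pinching a maximal collection of simple closed curves on $\Sigma_j$ killed by $r_W$, it may happen that every resulting piece of $\Sigma_j$ is sent by $r_0\circ r_W$ into a conjugate of a single maximal cyclic subgroup $Z$ of some $L_k$, with the conjugating elements lying in $\ker\hat{r}_0$; the boundary elements then all land in $Z$, the ``handle generators'' become trivial or powers of $z$, and $\rho(S_j)\le Z$ is cyclic. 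Your appeal to ``the natural compatibility of the two floor structures'' is not a proof: nothing in the definition of a hyperbolic floor rules this scenario out.

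The paper's proof therefore does not simply take $\rho=\hat{r}_0\circ r_W$; it explicitly warns ``though in some cases we will have to slightly modify this,'' and then carries out a careful case analysis based on the pinching decomposition $\Gamma(\Sigma_j,\mathcal{C})$, modifying the retraction $r'$ in the degenerate cases (a closed-surface piece surjecting onto $\Z$ is redirected to a non-commensurable cyclic subgroup; conjugating elements landing in $\ker\hat{r}$ are replaced by elements outside the kernel and outside $Z$). Your proposal needs this case analysis — or some substitute for it — to close the gap; as written, the step from ``boundary subgroups are preserved'' to ``$\rho(S_j)$ is non-abelian'' does not follow.
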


\begin{proof}  Let $r: G \to G'$ be the retraction corresponding to $\{\Sigma_1, \ldots, \Sigma_p\}$, and  let $r_0: G'_0 \to G''_0$ be that corresponding to $\Sigma$ (where $G'_0=\pi_1(\Lambda_0)$). Note that by definition of a floor structure $G'$ admits a free decomposition as $G'_0* G'_1 * \ldots * G'_m$, where each $G'_i$ is the fundamental group of a connected component $\Lambda_i$ of $\Lambda- \{v_1, \ldots, v_p\}$. Let $\hat{r}: G' \to G''=G''_0 * G'_1* \ldots * G'_m$ be the retraction which restricts to $r'_0$ on $G'_0$ and to the identity on $G'_i$ for $i>0$.  
	
We claim that there is a retraction $r''$ such that $( G, G'', r'')$ is a hyperbolic floor with corresponding decomposition the graph of groups obtained from $\Lambda$ by collapsing all the edges which are not adjacent to one of the vertices $\{v, v_1, \ldots, v_p\}$. 

We first set $r'' = \hat{r} \circ r$ - though in some cases we will have to slightly modify this.	The only part that requires some care is to guarantee the non abelianity of images of surface groups. 
The image by $\hat{r} \circ r$ of the vertex group $S$ corresponding to $\Sigma$ is just $\hat{r}(S)$ which is non abelian. We now consider the vertex group $S_j$ associated to one of the vertices $v_j$. If $v_j$ is adjacent to at least two distinct connected components of $\Lambda - \{v_1, \ldots, v_p\}$ then $S_j$ contains non trivial elements in at least two of the factors $G'_j$, moreover if one of those is in $G'_0$ it fixes a vertex of $\Lambda'_0$ other than  $v$ so in particular it is sent to a conjugate of itself by $\hat{r}$. The image of $S_j$ by $\hat{r} \circ r$ is therefore non abelian. 

We therefore assume that $\Sigma_j$ is adjacent to only one connected component  of $\Lambda - \{v_1, \ldots, v_p\}$, say the component  $\Lambda'_i$ corresponding to $H_i$.
We choose a maximal set of simple closed curves on $\Sigma_1, \ldots, \Sigma_p$ whose corresponding elements are killed by $r$ and we build the morphism $\rho_{\cal C}$, and the graph of groups $\Gamma(\Sigma_j,{\cal C})$ as described above - recall that $r$ factors as $r' \circ \rho_{\cal C}$. 

If there is a vertex group $K$ of $\Gamma(\Sigma_j,{\cal C})$ which does not contain the image of boundary subgroups of $S_j$ (i.e. it corresponds to a closed surface $\Sigma^l_j$ of $\Gamma(\Sigma_j,{\cal C})$), and which admits a surjection onto $\Z$, then we can modify $r'$ by sending $K$ onto a cyclic subgroup of $G''$ which is not commensurable to the image in $G''_0$ of one of the boundary subgroups of $S_j$ say. This guarantees non abelianity of the image of $S_j$.

Suppose thus all the vertex group of $\Gamma(\Sigma_j,{\cal C})$ contain the image of some boundary subgroups of $S_j$. Consider the image of one such subgroup $K$ by $r'$ : it must lie in one of the factor $G'_i$.  Since the boundary subgroups are elliptic in the graph of groups decomposition $\Lambda_i$, this induces a decomposition $\Lambda^K_i$ of $K$ dual to a set of simple closed curves on the surface $\Sigma^l_j$ of $\Gamma(\Sigma_j,{\cal C})$ associated to $K$. In the case $i=0$, if one of the vertex groups of $\Lambda^K_i$ is sent to a conjugate of the vertex group $S$ associated to $v$, then its image has in fact finite index in $S$, since $r'$ does not kill any simple closed curve on $\Sigma^l_j$.  Thus, the image of $K$ by $\hat{r} \circ r$ contains a finite index subgroup of $\hat{r} \circ r(S) = \hat{r}(S)$ - in particular it is non abelian and we are done. If no vertex group of $\Lambda^K_i$ is sent to a conjugate of $S$, then $\hat{r} \circ r' \mid_{K}$ coincides with $r'$ up to a conjugation. If this is the case for all the vertices of $\Gamma(\Sigma_j,{\cal C})$, then it is not hard to see that the image of $S_j$ by $\hat{r} \circ r$ must be non abelian, unless all vertex groups of $\Gamma(\Sigma_j,{\cal C})$ are sent to conjugates of a same cyclic subgroup $Z$ by elements which lie in the kernel of $\hat{r}$. In that case, we can modify $r'$ by choosing the conjugating elements outside the kernel, and outside the cyclic subgroup $Z$ to guarantee that the image of $S_j$ by $\hat{r} \circ r$ is not abelian.
\end{proof}

The following result gives a combinatorial condition for a surface to floor-retract.
\begin{prop} \label{RetractingSurfaceEquations} Let $\Lambda$ be a graph of groups with surfaces, and let $S$ be a surface type vertex group of $\Lambda$ with associated surface $\Sigma$. Assume $\Sigma$ is an $n$-punctured connected sum of $l$ tori and $m$ projective planes.
	
Denote by $H_1, \ldots, H_p$ the vertex groups of $\Lambda_{\Sigma}$ other than $S$. For each $i$, denote by $e^i_1, \ldots, e^i_{n_i}$ the edges of $\Lambda_{\Sigma}$ adjacent to the vertex corresponding to $H_i$. 
 
Then $\Sigma$ floor-retracts in $\Lambda$ if and only if there exist $l_1, \ldots, l_p, m_1, \ldots, m_p \in \N$ such that
\begin{enumerate}
\item for each $i$, there exist elements $\delta_{e^i_1}, \ldots, \delta_{e^i_{n_i}} $ of $ H_i$ (respectively of $H_1 *\Z$ if $p=1$ and $H_1$ is cyclic) which are conjugates of the generators of the edge groups of $e^i_j$ such that the product $\delta_{e^i_1} \ldots \delta_{e^i_{n_i}}$ can be written in $H_i$ as a product of $l_i$ non trivial commutators and $m_i$ non trivial squares;
\item if the elements $\delta_{e^i_j}$ and the squares appearing in all the above products all commute, then $l>0$;
\item  $l \geq \sum^{p}_{i=1} l_i$, and $m \geq \sum^p_{j=1} m_j$.
\end{enumerate}
\end{prop}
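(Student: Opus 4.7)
The proof proceeds in two directions, both leveraging the pinching construction discussed just before the statement. \emph{Necessity.} Suppose $\Sigma$ floor-retracts via $(G, H, r)$. Apply the pinching construction to $r$: this yields a maximal collection ${\cal C}$ of simple closed curves on $\Sigma$ killed by $r$, a factorization $r = r' \circ \rho_{\cal C}$, and pieces $\Sigma^1, \ldots, \Sigma^N$ whose fundamental groups each map via $r'$ into a single factor $H_{i(k)}$ of $H$ (allowing $H_1 * \Z$ in the extended case). In each $\Sigma^k$, of type $(l'_k, m'_k)$, the standard surface presentation equates the product of its boundaries to a product of $l'_k$ commutators and $m'_k$ squares of the handle and crosscap generators. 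Every boundary element $z$ of the original $\Sigma$ coincides in $G$ with an edge-group generator lying in the adjacent $H_i$, hence is fixed by the retraction $r$; so applying $r'$ to the relation on $\Sigma^k$ yields an equation in $H_{i(k)}$ between a product of (conjugates of) the corresponding edge generators and a product of commutators and squares of arbitrary elements of $H_{i(k)}$. Grouping pieces by $i$ and multiplying their relations in $H_i$ gives condition (1), with $l_i$ and $m_i$ at most $\sum_{i(k) = i} l'_k$ and $\sum_{i(k) = i} m'_k$ after dropping any trivial commutators or squares. Condition (3) follows from an Euler-characteristic bookkeeping argument: each capped cut curve contributes $+2$ or $+1$ to $\chi$ according as it is two-sided or one-sided, which bounds the total orientable and non-orientable genera of the pieces by $l$ and $m$. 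Condition (2) comes from the floor requirement that $r(S)$ be non-abelian: if the $\delta_{e^i_j}$ and all squares pairwise commute, the only possible source of non-commutativity in $r(S)$ is a handle generator pair, forcing some $l'_k \geq 1$ and hence $l \geq 1$.

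\emph{Sufficiency.} Given combinatorial data satisfying (1), (2), (3), I design a system of disjoint simple closed curves on $\Sigma$ cutting it into $p$ pieces $\Sigma_i$ of type $(l_i, m_i)$, each carrying exactly the boundary components $e^i_1, \ldots, e^i_{n_i}$ adjacent to $H_i$, plus possibly an extra closed piece $\Sigma^*$ absorbing the remaining $l - \sum l_i$ handles and $m - \sum m_i$ crosscaps --- this is realizable precisely by condition (3). For each $i$ define $\pi_1(\Sigma_i) \to H_i$ by sending the boundary corresponding to $e^i_j$ to $\delta_{e^i_j}$ and choosing images of the handle and crosscap generators that realize the prescribed product expression; the only relation in $\pi_1(\Sigma_i)$ is precisely this boundary-genus identity, so the morphism is well-defined. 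Assemble these into a morphism $S \to H$ via the trivial-edge graph of groups coming from the cutting curves, and extend to $r : G \to H$ by the identity on $H$. Condition (2) ensures that the handle generator images can be chosen so that $r(S)$ is non-abelian, either from a piece with $l_i > 0$, or from the extra handle on $\Sigma^*$ when $l > \sum l_i$. Hence $(G, H, r)$ is a hyperbolic floor with associated decomposition $\Lambda_\Sigma$; the extended case $p = 1$ with $H_1$ cyclic is handled identically, using the auxiliary $\Z$ factor as the non-abelian target.

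The delicate step will be the combinatorial bookkeeping for condition (3): tracking the interaction between separating vs.\ non-separating and two-sided vs.\ one-sided curves in ${\cal C}$ with the genus types of the pieces produced by cutting and capping, and, in the sufficiency direction, verifying that the prescribed distribution of boundaries and genus pieces can actually be realized on $\Sigma$ by a disjoint family of simple closed curves. A secondary care point is ensuring that the non-abelianity clause of the floor condition can always be arranged under (2), especially in the extended case where the target $H_1 * \Z$ is a free group and commutation of the $\delta$'s and squares forces them into a common cyclic subgroup.
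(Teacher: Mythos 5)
Your necessity argument follows the paper's route: apply the pinching construction, read off equations from the piece presentations, and bookkeep genera via Euler characteristic. That is essentially what the paper does, though (like the paper) you leave the bookkeeping for condition (3) somewhat impressionistic.

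Your sufficiency argument, however, has a genuine gap in exactly the spot you flag as a ``secondary care point.'' You propose to realize the extra genus $l-\sum l_i$, $m-\sum m_i$ on a separate \emph{closed} piece $\Sigma^*$ and claim that in the commuting case (all $\delta_{e^i_j}$'s and squares commute, all $l_i=0$, but $l>0$ forced by condition 2) the ``extra handle on $\Sigma^*$'' provides the non-abelianity of $r(S)$. This does not work: a closed surface group of genus one (torus or Klein bottle) has abelian, indeed cyclic, image in any torsion-free hyperbolic group, and in general the closed piece's image can be chosen abelian, so routing the extra handle through a disjoint closed component $\Sigma^*$ gives no control. The paper avoids this trap by not cutting at all in this case: it works directly with a single presentation $S(l,m,n)$ of $S=\pi_1(\Sigma)$ and defines the retraction on generators by $a_j\mapsto h$ (with $h$ chosen not to commute with the $\delta$'s), $b_j\mapsto 1$, $c_1\mapsto z^{\pm K/2}$, $c_j\mapsto 1$ for $j>1$, where the well-definedness hinges on the observation, derived from condition (1), that $K=\sum k_j$ (where $\delta_{e_j}=z^{k_j}$) vanishes when $m'=0$ and is even otherwise. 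Then $r(S)$ contains both $h$ and $\delta_{e_1}$, which do not commute, so $r(S)$ is non-abelian. The essential point is that the extra handle must share a vertex group with the boundary data, not sit on a disjoint closed component; you would need to reorganize your cut system (e.g., absorb all the extra genus into $\Sigma_1$, or cut nothing at all) and then verify the resulting map is well-defined, which is precisely where the parity of $K$ enters.

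A smaller divergence: the paper never constructs a cut system on $\Sigma$ for sufficiency; it simply specifies images of generators of $S(l,m,n)$ and of the Bass--Serre elements, which sidesteps the question of whether your prescribed distribution of boundaries and genus types among pieces is geometrically realizable. Your geometric framing could be made rigorous, but the algebraic route is both shorter and what actually forces you to confront the delicate case above.
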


 \begin{figure}[ht!]
\centering
\includegraphics[width=.7\textwidth]{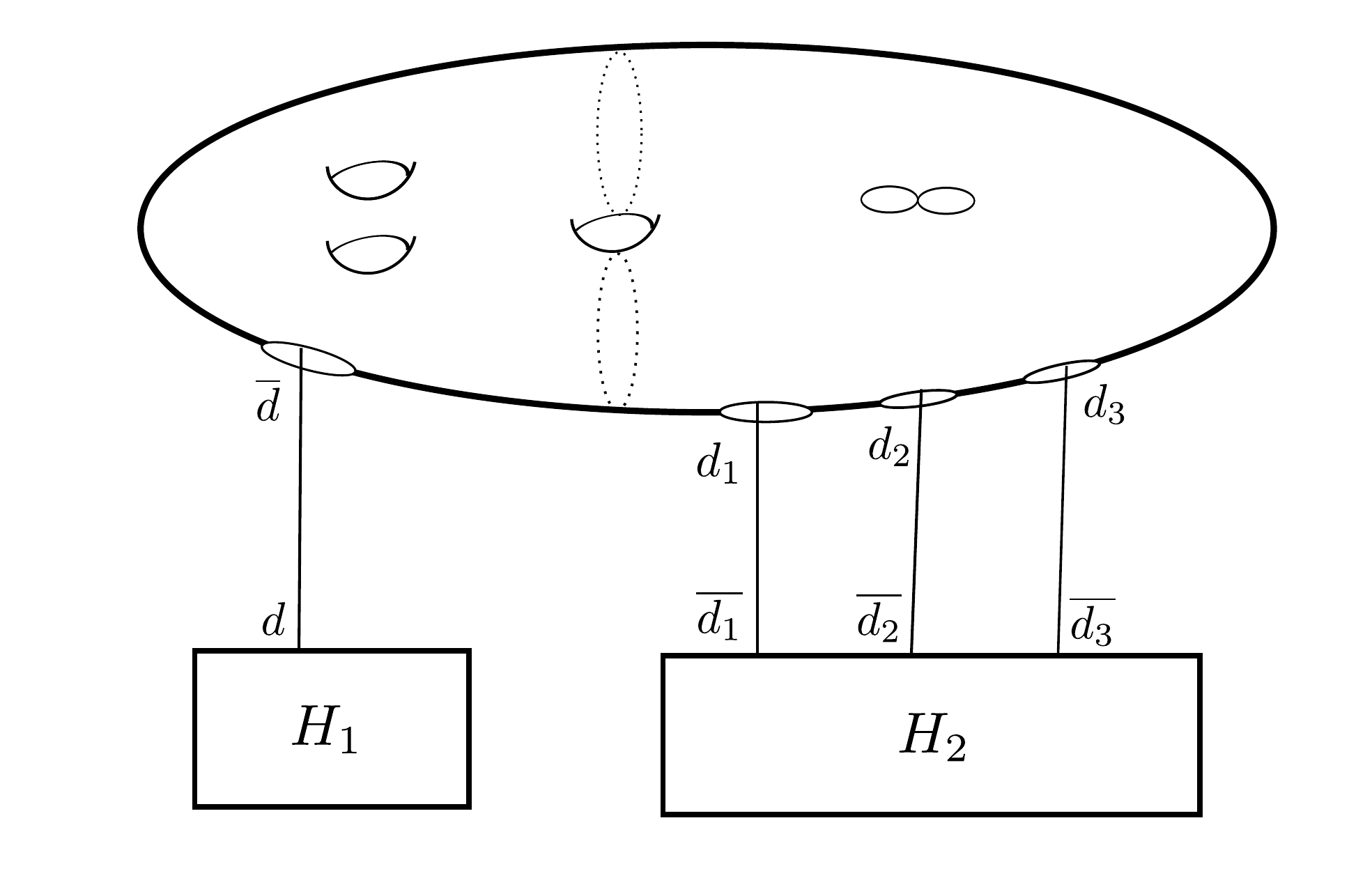}
\caption{If there is $d'\in H_1$ conjugate to $\overline{d}$ which is a product of two non trivial commutators, and $d'_1,d'_2,d'_3\in H_2$ conjugate to $\overline{d_1},\overline{d_2},\overline{d_3}$, such that $d'_1d'_2d'_3$ is a square, $\Sigma$ floor retracts.}
\end{figure}

\begin{proof} Suppose the three conditions hold. It will be convenient to use a slightly different notation : denote by $e_1, \ldots, e_n$ the edges of $\Lambda_{\Sigma}$, and suppose that $e_1, \ldots, e_{n_1}$ are adjacent to $H_1$, that $e_{n_1+1}, \ldots, e_{n_1+n_2}$ to $H_2$, and so on. We denote by $A$ the following set of indices: $\{n_1 + \ldots + n_k + 1 \mid k=0, \ldots, p\}$ (so that each $j \in A$ is the index of the "first" edge adjacent to some $H_i$).
	
For each $j$, we denote by $d_j$ the image of a generator of the edge group corresponding to $e_j$ in $S=\pi_1(\Sigma)$, and by $\bar{d}_j$ the image of that same generator in the corresponding vertex group $H_i$. 

A presentation for the group $G$ can be obtained from the group $S$, the groups $H_i$, and $n-r$ Bass-Serre elements $\{t_j \mid 1 \leq j \leq n, j \not \in A\}$, by identifying each $d_j$ to $\bar{d}_j$ if $j \in A$, and to $t_j\bar{d}_jt^{-1}_j$ if $j \not \in A$. 
 
From the first condition, we know that there are some elements $\delta_{e_1}=u_1 \bar{d}_1 u^{-1}_1, \ldots, \delta_{e_n}=u_n \bar{d}_n u^{-1}_n$ with $u_j$ in one of the factors of $H_1 * \ldots * H_p$ such that the product $\delta_{e_1} \ldots \delta_{e_n}$ can be written in $H_1 * \ldots * H_p$ as a product of $l' = \sum l_i$ commutators $[\bar{a}_k, \bar{b}_k]$ and $m' = \sum m_j $ squares $\bar{c}^2_j$. Note that  for $j \in A$ we can assume $u_j=1$, that is $\delta_{e_j} = \bar{d}_j$.

Denote by $S(l,m,n)$ the group presentation given by
$$\langle a_1, \ldots a_l, b_1, \ldots, b_l, c_1, \ldots c_m, d_1, \ldots, d_n \mid d_1\ldots d_n = \Pi^l_{i=1} [a_i,b_i] \Pi^m_{j=1} c^2_j \rangle$$ 
This is a presentation of a surface group corresponding to an $n$-punctured connected sum of $l$ tori and $m$ projective planes, where representatives of the boundary subgroups are generated by the elements $d_1, \ldots, d_n$.

Thus by the third condition $S$ admits such a presentation $S(l,m,n)$ for which $l \geq l'=\sum^{r}_{i=1} l_i$, and $m \geq m'=\sum^r_{j=1} m_j$.

We can thus define a morphism $r: G \to H_1 * \ldots * H_r$ which is the identity on each $H_i$, and sends
\begin{itemize}
\item $a_j, b_j$ to $\bar{a}_j, \bar{b}_j$ for $j \leq l'$ and to $1$ for $l' < j \leq l$;
\item $c_j$ to $\bar{c}_j$ for $j \leq m'$ and to $1$ for $m' < j \leq m$;
\item $t_j$ to $u_j$ for $j \not \in A$.
\end{itemize}
Note that the image of $S$ by $r$ is generated by the elements $\bar{a}_j, \bar{b}_j, \bar{c}_j$, and $u_j \bar{d}_j u^{-1}_j = \delta_{e_j}$.

If $l'>0$ it is easy to see that $r(S)$ is non abelian. This is also the case if the elements $\delta_{e_j}$ and $\bar{c}_j$ do not all commute (in particular, if $p>1$). By the second condition, this happens in particular whenever $l=0$.

In all these cases, we have found a retraction $r: G \to H_1 * \ldots * H_p$ which makes $(G,H_1 * \ldots * H_p , r)$ into a hyperbolic floor with associated decomposition $\Lambda_{\Sigma}$.

We are thus left to deal with the case where all the elements $\delta_{e_j}$ commute (so $p=1$), $l'=0$ and $l>0$. In this case we define a different morphism $r: G \to H_1$ (respectively $r: G \to H_1 *\Z$). The elements $\delta_{e_j}$ must be common powers of some element $z \in H_1$, say $\delta_{e_j} = z^{k_j}$. We see that the first condition implies that $K= k_1+ \ldots + k_n$ is $0$ if $m'=0$ and is even otherwise. 

We define $r$ as follows: pick an element $h \in H_1$ (respectively in $H_1 * \Z$) which does not commute to the $\delta_{e_j}$. Then let $r$ be the identity on the $H_i$ and let 
\begin{itemize}
\item $a_j \mapsto h$, $b_j\mapsto 1$ for all $j \leq l$;
\item$c_1 \mapsto z^{-K/2}$ and $c_i \mapsto 1$ for all $i>1$;
\item $t_j \mapsto u_j$ for $j>1$;
\end{itemize}
This is a morphism $r: G \to H_1$ or $G \to H_1 *\Z$ which is the identity on $H_1$ and for which $r(S)$ is non abelian. In the second case, extend it to $G*\Z$ by the identity on $\Z$. Then $(G,H_1 , r)$ is an (extended) hyperbolic floor with associated decomposition $\Lambda_{\Sigma}$.

Conversely, suppose that there exists a retraction $r: G \to H_1 * \ldots * H_p$ (respectively $G*\Z \to H_1*\Z$ if $p=1$ and $H_1$ is cyclic). We choose a maximal set of disjoint simple closed curves ${\cal C}$ whose corresponding elements are in the kernel of $r$, and build the the maps $r', \rho_{\cal C}$ and the graph of groups decomposition $\Gamma(\Sigma,{\cal C})$ for $\rho_{\cal C}(\pi_1(\Sigma))$ as described above. The vertex stabilizers of $\Gamma(\Sigma,{\cal C})$ are the fundamental groups of surfaces $\Sigma_1, \ldots , \Sigma_q$ each of which admits a presentation $S(l_k, m_k, n_k)$ (the surfaces $\Sigma_k$ are the connected components obtained by cutting $\Sigma$ along the curves of ${\cal C}$ and gluing disks on the boundary thus created). 

Because of the way the $\Sigma_k$ are obtained from $\Sigma$, we see that $\Sigma$ can be thought of as an $n$ punctured connected sum of $l$ tori with $m$ projective planes, with $l \geq l_1 + \ldots + l_q$, $m \geq m_1 + \ldots + m_q$ and $n =n_1 + \ldots + n_q$.  Moreover, by maximality of ${\cal C}$, the restriction of $r'$ to each $\pi_1(\Sigma_k)$ does not kill any element corresponding to a simple closed curve on $\Sigma_k$. In particular we know that all the boundary elements of $\Sigma_k$ are sent to the same $H_i$ by $r'$.

The images of the generators of the presentation $S(l_k, m_k, n_k)$ of $\pi_1(\Sigma_k)$ by $r'$ thus give us a way of writing a product of conjugates $\delta_{e^k_j}$ of these boundary elements as a product of $l_k$ non trivial commutators and $m_k$ non trivial squares in $H_i$. By grouping together all the surfaces with boundary elements sent to the factor $H_i$, we get exactly the first condition.

If $l=0$ and the elements $\delta_{e^k_j}$ all commute - say they all lie in a common maximal cyclic subgroup $\langle z \rangle$ - the fact that the image $r(S)$ is not abelian implies that at least one of the elements $c_j$ has image outside of $\langle z \rangle$, which means that the second condition is satisfied.
\end{proof}

\begin{defi} \label{SubsurfaceFloorRetractsDef} (retracting subsurface) Let $\Lambda$ be a graph of groups with surfaces with fundamental group $G$. Let $v$ be a surface type vertex with associated surface $\Sigma$, and let $\Sigma'$ be a subsurface of $\Sigma$. We say that \textbf{the subsurface $\Sigma'$ floor-retracts in $\Lambda$} if the surface $\Sigma'$ retracts in the graph of groups obtained by refining $\Lambda$ at $v$ by the graph of groups dual to the set of boundary curves of $\Sigma'$.
If $\Lambda$ is a graph of groups with sockets, $\Sigma$ a surface in $\Lambda$ and $\Sigma'$ a subsurface of $\Sigma$, we say $\Sigma'$ floor-retracts in $\Lambda$ if it floor-retracts in the corresponding graph of groups with surfaces.
\end{defi}

\begin{lemma} \label{RetractingSubsurfaceDifferentBoundaries} Let $\Lambda$ be a graph of groups with surfaces with fundamental group $G$. Let $v$ be a surface type vertex whose associated surface $\Sigma$ has nonempty boundary. 

If  a proper subsurface $\Sigma'$ of $\Sigma$ floor-retracts in $\Lambda$, there exists a subsurface $\tilde{\Sigma}$ which does not contain all the boundary components of $\Sigma$ and which also floor-retracts in $\Lambda$.
\end{lemma}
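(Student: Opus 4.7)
The plan is to split into two cases, depending on whether $\Sigma'$ already excludes some boundary component of $\Sigma$.

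\textbf{Case 1.} If $\Sigma'$ does not contain every boundary component of $\Sigma$, then $\tilde\Sigma := \Sigma'$ already does the job: it floor-retracts by hypothesis and by assumption fails to contain at least one boundary of $\Sigma$.

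\textbf{Case 2.} Suppose $\Sigma'$ contains every boundary component of $\Sigma$. Since $\Sigma'$ is a \emph{proper} subsurface and $\Sigma$ is connected, $\partial\Sigma' \setminus \partial\Sigma$ is nonempty; denote its components by $c_1,\ldots,c_k$ (internal simple closed curves of $\Sigma$). Set $\tilde\Sigma := \overline{\Sigma \setminus \Sigma'}$. This is a nonempty subsurface whose boundary is exactly $\{c_1,\ldots,c_k\}$, so $\tilde\Sigma$ contains \emph{no} boundary component of $\Sigma$, which in particular means it does not contain all of them.

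The heart of the argument is to show that $\tilde\Sigma$ floor-retracts in $\Lambda$. I would apply the combinatorial criterion of Proposition~\ref{RetractingSurfaceEquations}. The graph $\Lambda_{\tilde\Sigma}$ (obtained by collapsing in the refinement of $\Lambda$ at $v$ by $\{c_1,\ldots,c_k\}$, keeping only edges adjacent to $\tilde\Sigma$'s vertex) has $\pi_1(\Sigma')$ as its unique non-surface-type neighbor. So I must exhibit conjugates $\tilde\delta_{c_i} \in \pi_1(\Sigma')$ of the edge generators $\bar c_i$ such that $\tilde\delta_{c_1}\cdots\tilde\delta_{c_k}$ is a product of $l'$ nontrivial commutators and $m'$ nontrivial squares in $\pi_1(\Sigma')$, with $l'$, $m'$ controlled by the genus data of $\tilde\Sigma$, and verify condition (2) on non-commuting.

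The natural source of such an identity is the surface relation for $\Sigma'$: writing $d_1,\ldots,d_N$ for the $\Sigma$-boundary generators in $\pi_1(\Sigma')$, one has $d_1\cdots d_N\, c_1\cdots c_k = \prod_{i=1}^{l_{\Sigma'}}[a_i,b_i]\,\prod_{j=1}^{m_{\Sigma'}} s_j^2$, which rearranges to an expression for $c_1\cdots c_k$ as $(d_1\cdots d_N)^{-1}$ times a product of commutators and squares in $\pi_1(\Sigma')$.

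\textbf{The main obstacle} is absorbing the prefactor $(d_1\cdots d_N)^{-1}$ into a product of commutators and squares, and doing so within the genus bounds of $\tilde\Sigma$. To overcome this, I would exploit the hypothesis that $\Sigma'$ floor-retracts: Proposition~\ref{RetractingSurfaceEquations} furnishes, for each $\Lambda$-neighbor $H_i$ of $\Sigma'$, an expression of a conjugated product of the corresponding $d_j$'s as a product of commutators and squares in $H_i$. Rewriting this identity inside $\pi_1(\Sigma')$ (i.e.\ ``on the surface side'' instead of on the neighbor side), and replacing $\tilde\delta_{c_i}$ by suitable conjugates $g_i c_i g_i^{-1}$ so that the $g_i$'s cancel the $d_j$-contributions through the surface relation, one should get an expression for $\tilde\delta_{c_1}\cdots\tilde\delta_{c_k}$ as a product of at most $l_{\tilde\Sigma}$ commutators and $m_{\tilde\Sigma}$ squares. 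The bounds follow from the additivity of Euler characteristic along the cut $\Sigma = \Sigma' \cup_{c_1,\ldots,c_k} \tilde\Sigma$ combined with the inequality $l_{\Sigma'} \geq \sum l_i$ from the floor-retraction of $\Sigma'$. Condition (2) of Proposition~\ref{RetractingSurfaceEquations} for $\tilde\Sigma$ is checked separately: if the $\tilde\delta_{c_i}$'s and all the squares commute, one shows that the same happens in a neighbor of $\Sigma'$, contradicting the non-abelianity required by the floor retraction of $\Sigma'$.
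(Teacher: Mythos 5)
There is a genuine gap in your approach, and the route you chose diverges from the paper's in a way that I believe cannot be repaired.

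In Case 2 you take $\tilde{\Sigma} = \overline{\Sigma \setminus \Sigma'}$ and claim it floor-retracts. But floor-retraction is not a symmetric relation between a subsurface and its complement, and the genus bound of Proposition~\ref{RetractingSurfaceEquations} works against you. Concretely: $\Sigma'$ floor-retracting tells you that the boundary generators $d_1,\ldots,d_N$ of $\Sigma$ (and the internal curves $c_i$) can each be written, after conjugation, as short products of commutators and squares \emph{on the side away from $\Sigma'$}, with the total genus controlled by that of $\Sigma'$. To show $\tilde{\Sigma}$ floor-retracts you would instead need the product of conjugates of the $c_i$'s to be a product of at most $l_{\tilde{\Sigma}}$ commutators and $m_{\tilde{\Sigma}}$ squares on the side away from $\tilde{\Sigma}$. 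Rewriting $c_1\cdots c_k = (d_1\cdots d_N)^{-1}\prod[a_i,b_i]\prod s_j^2$ inside $\pi_1(\Sigma')$ and substituting the commutator/square expressions for the $d_j$'s that come from the floor-retraction of $\Sigma'$ produces a product of roughly $l_{\Sigma'}$ commutators, not $l_{\tilde{\Sigma}}$; since $\chi(\Sigma) = \chi(\Sigma') + \chi(\tilde{\Sigma})$, the genus of $\tilde{\Sigma}$ is what is \emph{left over} and can be much smaller than $l_{\Sigma'}$. For instance, if $\Sigma$ has genus $3$ with one boundary $\beta$ glued to $[g,h]$ in a free factor $H_1=\F(g,h)$, and $\Sigma'$ is the genus-$2$ piece containing $\beta$ with one internal curve $c$, then $\Sigma'$ floor-retracts (write $\beta = [g,h]$ on one side and $c=[a_3,b_3]$ on the other), but $\tilde{\Sigma}$ is a genus-$1$ one-holed torus and for it to floor-retract $c$ would have to be conjugate to a \emph{single} commutator in the amalgam $\pi_1(\Sigma')*_{\beta}H_1$, which the floor-retraction of $\Sigma'$ gives you no reason to expect. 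You also misidentify the non-surface neighbor of $\tilde{\Sigma}$'s vertex in $\Lambda_{\tilde{\Sigma}}$: after collapsing, it is the fundamental group of the entire complementary subgraph, containing all the $H_j$'s and Bass--Serre elements, not merely $\pi_1(\Sigma')$. Finally, $\overline{\Sigma\setminus\Sigma'}$ may be disconnected, whereas a surface type vertex carries a connected surface.

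The paper's proof avoids all of this by not passing to the complement. Instead it carves out of $\Sigma$ a small piece touching a boundary component $\beta_1$ of $\Sigma$: when $l > l'$ a pair of pants with boundary $\beta_1,\delta_1,\delta_2$, and when $m > m'$ a twice-punctured projective plane. The remaining $\tilde{\Sigma}$ misses $\beta_1$ and has genus data decreased by exactly one unit, and the new non-surface vertex group $\tilde{H}_1 = \langle H_1, d_1, d_2 \mid b_1 = d_1 d_2 \rangle$ still satisfies condition (1) of Proposition~\ref{RetractingSurfaceEquations} with the same $l_1, m_1$ as before, because the old product $b_1^{h_1}\cdots b_{n_1}^{h_{n_1}}$ equals the new product $(d_1 d_2)^{h_1} b_2^{h_2}\cdots$. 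Since $\sum l_j \leq l' < l$, the strict inequality is precisely the slack that makes $\tilde{\Sigma}$ floor-retract. If you want to salvage a complement-based argument you would first need a genuinely new lemma relating the genus of a boundary curve in $\pi_1(\Sigma')$ to that in the ambient group, and I do not think such a bound holds.
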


\begin{proof} Without loss of generality we may assume that $\Lambda=\Lambda_{\Sigma}$ and that it is bipartite between the vertex corresponding to $\Sigma$ and the non surface type vertices $u_1, \ldots, u_p$. Let $\Sigma'$ be a subsurface of $\Sigma$ containing all of the $n$ boundary components of $\Lambda$, and suppose $\Sigma'$ floor-retracts in $\Lambda$  .

After refining $\Lambda$ by the graph of groups for $\pi_1(\Sigma)$ dual to the boundary curves of $\Sigma'$, we get a graph of groups $\Lambda'$ with a vertex $v$ corresponding to $\Sigma'$, the vertices (coming from) $u_1, \ldots, u_p$ and vertices $u_{p+1}, \ldots, u_s$ corresponding to the connected components of $\Sigma-\Sigma'$.

Since $\Sigma'$ floor-retracts in $\Lambda'$, we can apply Lemma \ref{RetractingSurfaceEquations} to see that for each $j$, the product of some conjugates of the generators of the $n_j$ edge groups of $\Lambda'$ adjacent to $u_j$ can be written as a product of $l_j$ non trivial commutators and $m_j$ non trivial squares in the vertex group of $u_j$, such that $\pi_1(\Sigma')$ admits a presentation of the form $S(l',m',n')$ with $l' \geq \sum^s_{i=1} l_i$, and $m' \geq \sum^s_{j=1} m_j$. Moreover if $l=0$ then not all the squares and the conjugates commute.

The surface $\Sigma$ admits a presentation of the form $S(l,m,n)$ with $l \geq l'$ and $m \geq m'$. Because we assumed $\Sigma'$ to be proper, and to contain all the boundary components of $\Sigma$ we must have $l>l'$ or $m>m'$.

If $l>l'$, consider a subsurface $\tilde{\Sigma}$ of $\Sigma$ whose complement in $\Sigma$ is a pair of pants with boundary components $\beta_1, \delta_1, \delta_2$ where $\beta_1$  is a boundary component of $\Sigma$ which corresponds to an edge adjacent to $u_1$ and $\delta_1, \delta_2$ are a non boundary parallel curves on $\Sigma$.

To see that $\tilde{\Sigma}$ floor-retracts, note that the product $b^{h_1}_1 b^{h_2}_2 \ldots b^{h_{n_1}}_{n_1}$ of some conjugates of the generators $b_j$ of the edge groups adjacent to $u_1$ can be written as a product of $l_1$ commutators and $m_1$ squares in the vertex group $H_1$ associated to $u_1$ in $\Lambda$. 

We see that the vertex group $\tilde{H}_1$ of $\Lambda_{\tilde{\Sigma}}$ associated to the image $\tilde{u}_1$ of the vertex $u_1$ has presentation $\langle H_1, d_1, d_2 \mid b_1=d_1d_2 \rangle$ where $d_1, d_2$ are the elements corresponding to the edges adjacent to $\delta_1, \delta_2$. In particular, we see that the product $d^{h_1}_1d^{h_1}_2 b^{h_2}_2 \ldots b^{h_{n_1}}_{n_1}$, which is a product of some conjugates of the generators of the edge groups adjacent to $\tilde{u}_1$ in $\Lambda_{\tilde{\Sigma}}$, is equal to $ b^{h_1}_1 b^{h_2}_2 \ldots b^{h_{n_1}}_{n_1}$. Thus it can be written as the product of $l_1$ non trivial commutators and $m_1$ non trivial squares in the vertex group $\tilde{H}_1$.

Note that for $1<j\leq p$ we already know that the product of some conjugates of the generators of the $n_j$ edge groups adjacent to $u_j$ can be written as a product of $l_j$ commutators and $m_j$ squares in the vertex group of $u_j$. Also, if $l=0$ the conjugates of edge group generators and the squares do not all commute. Finally, we note that $\tilde{\Sigma}$ admits a presentation of the form $S(l-1, m, n+1)$. Since $\sum^r_{j=1} l_j \leq l' <l$ Lemma \ref{RetractingSurfaceEquations} applies, and we see that $\tilde{\Sigma}$ floor retracts.

If $l=l'$ we must have $m>m'$: in this case we set $\tilde{\Sigma}$ to be a non orientable subsurface of $\Sigma$ whose complement is a twice punctured projective plane with one boundary component $\beta_1$ in common with $\Sigma$, and the other a non boundary parallel curve of $\Sigma$. We proceed in a similar manner to see that $\tilde{\Sigma}$ floor-retracts. 
\end{proof}

\subsection{Conjugacy growth of orbits}

To show non homogeneity of a group $G$, it is enough to show that some tuple $u$ of an element of a proper elementary submodel $G_0$ has orbit under $\Aut(G_0)$ not contained in its orbit under $\Aut(G)$. For this we will need to be able to compare orbits. This section gives some results on the conjugacy growth of orbits which will help us with that task.

\begin{defi} (conjugacy growth) Let $G$ be a finitely generated group, and let $A$ be a subset of $G$. The conjugacy growth function $\lambda_A$ of $A$ with respect to a generating set $S$ of $G$ is defined by
$$ \lambda_A(n) = \abs{\{ [a] \mid a \in A \textrm{ and } [a]\cap B_n \neq \emptyset \}} $$
where $[a]$ denotes the conjugacy class of $a$ and $B_n$ is the ball of radius $n$ in $G$ with respect to the metric induced by $S$.
\end{defi}

We wish to prove the following
\begin{prop} \label{AutOfFreeProducts} Let $U$ be a torsion-free hyperbolic group which is not the free group of rank $2$, and which admits a free decomposition $U_1 * \ldots * U_t * \F$ in which none of the factors $U_i$ are cyclic, and $\F$ is a free group. Assume the decomposition is non trivial, that is, that either $t>1$ or $\F$ is non trivial, and let $u \in U$ be an element which is not contained in a conjugate of one of the factors $U_i$. Then the conjugacy growth of the orbit of $u$ under $\Aut(U)$ is at least exponential.
\end{prop}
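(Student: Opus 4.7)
To bound below the conjugacy growth of the $\Aut(U)$-orbit of $u$ I would exhibit, for each $n$, exponentially many pairwise non-conjugate elements $\phi_g(u)$ of word length $O(n)$ in $U$, parameterized by $g$ in a suitable non-cyclic torsion-free hyperbolic subgroup $B$ of $U$. The key input is that in a non-elementary torsion-free hyperbolic group, the number of cosets of any infinite cyclic subgroup meeting the ball of radius $n$ grows exponentially. The automorphisms $\phi_g$ will be Fouxe-Rabinovitch partial conjugations in the generic case, and Nielsen-type twists of a cyclic factor in degenerate cases.

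\textbf{Main construction.} Group the factors of the given decomposition so as to write $U = A * B$ with $B$ non-cyclic and such that the cyclically reduced normal form of $u$ in $A * B$ has the shape $u = a_1 b_1 \cdots a_k b_k$ with $k \geq 1$, $a_i \in A \setminus \{1\}$ and $b_i \in B \setminus \{1\}$. For each $g \in B$, define $\phi_g \in \Aut(U)$ by $\phi_g(a) = g a g^{-1}$ for $a \in A$ and $\phi_g(b) = b$ for $b \in B$; this is a classical Fouxe-Rabinovitch partial conjugation. After conjugating by $g^{-1}$ to bring it to cyclically reduced form, $\phi_g(u)$ has cyclic word
\[
[a_1, g^{-1} b_1 g, a_2, g^{-1} b_2 g, \ldots, a_k, g^{-1} b_k g]
\]
in $A * B$. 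By the classical description of conjugacy in a free product (two cyclically reduced elements are conjugate iff their letter sequences coincide as cyclic sequences, the letters being equal as elements of the factors), the relation $\phi_g(u) \sim_U \phi_{g'}(u)$ forces $g' g^{-1}$ to commute with each $b_i$, up to a finite ambiguity coming from any cyclic symmetry of the letter sequence of $u$. Since centralizers of non-trivial elements in a torsion-free hyperbolic group are infinite cyclic, the intersection $\bigcap_i C_B(b_i)$ is at most infinite cyclic; as $B$ is non-elementary, the number of its cosets in $B$ meeting the ball of radius $n$ grows exponentially. Combined with the obvious bound $|\phi_g(u)|_U = O(|g|_U + |u|_U)$, this gives the desired exponential lower bound.

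\textbf{Realization and degenerate cases.} It remains to arrange a splitting $U = A * B$ with $B$ non-cyclic and $u$ of cyclic length $\geq 2$ in $A * B$. If $u$ has a letter in some $U_i$, set $A = U_i$ and let $B$ be the free product of all other factors; then $B$ is non-cyclic as soon as $t \geq 2$ or $\F$ has rank at least $2$. If $u$ is conjugate into $\F$ but has reduced length $\geq 2$ in $\F$, then $\F$ has rank at least $2$, and one sets $A = \langle z_1 \rangle$ and $B = \langle z_2, \ldots, z_r \rangle * U_1 * \cdots * U_t$; since $U \neq F_2$, either $t \geq 1$ or $r \geq 3$, so $B$ is non-cyclic. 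The degenerate cases remaining are (i) $t = 1$ and $\F = \langle z \rangle$, where the naive choice of $B$ would be cyclic, and (ii) $u$ is a power of a basis element of $\F$. Both are handled by the Nielsen-type automorphism $\alpha_g \colon z \mapsto zg$, identity on all other generators, where $g$ ranges over a non-cyclic factor (namely $U_1$ if $t \geq 1$, or $\langle z_2, \ldots, z_r \rangle$ if $t = 0$, the latter being non-cyclic by the hypothesis $U \neq F_2$); the same cyclic-word analysis shows that distinct $g$ (modulo an appropriate cyclic subgroup) produce non-conjugate images $\alpha_g(u)$. The main technical obstacle throughout will be the combinatorial verification, case by case, that distinct values of $g$ yield pairwise non-conjugate $\phi_g(u)$: this rests on the classification of conjugacy in free products and on the cyclic structure of centralizers in torsion-free hyperbolic groups.
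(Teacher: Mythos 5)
Your argument is essentially the same as the paper's: realize the orbit (at least) via Fouxe--Rabinovitch partial conjugations associated to a refined free splitting $U = A*B$, read off conjugacy via cyclically reduced normal forms in the free product, reduce the ambiguity to a finite union of cosets of a cyclic subgroup (centralizers in a torsion-free hyperbolic group), and compare linear coset growth against exponential growth of the group. The one structural difference is cosmetic: you conjugate $A$ by elements of $B$ and extract cosets of $\bigcap_i C_B(b_i)$ from the full system of cyclic-rotation constraints, while the paper conjugates $B$ by elements of a non-cyclic $A$ and only needs a \emph{single} relation $h^{-1}a_ih = a_j$ to get cosets of $Z(a_i)$; both then invoke the linear-growth bound for such double cosets (the paper's Lemma \ref{GrowthDoubleCoset}, which your sketch implicitly uses in the last sentence and should cite). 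One remark in your favour: your explicit treatment of the degenerate case in which $u$ is a proper power of a basis element of $\F$ (handled via Nielsen twists $z \mapsto zg$) is genuinely needed for the proposition as stated, since the paper's one-line assertion that ``grouping the factors together'' always yields $U = A*B$ with $u$ non-elliptic and $A$ non-cyclic fails exactly in that case (any regrouping of the given factors keeps $u$ elliptic). Your intermediate phrasing that conjugacy ``forces $g'g^{-1}$ to commute with each $b_i$'' should be stated more carefully --- a nontrivial cyclic rotation forces a conjugation relation $x b_i x^{-1} = b_{i+j}$ rather than commutation --- but your parenthetical ``finite ambiguity'' correctly captures that the solution set is still a finite union of cosets of $\bigcap_i C_B(b_i)$, so this is imprecision rather than a gap.
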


We now prove a series of lemmas about growth of certain subsets of torsion free hyperbolic groups. The first concerns double cosets of maximal cyclic subgroups.
\begin{lemma} \label{GrowthDoubleCoset} Let $\Gamma$ be a torsion free hyperbolic group with a fixed generating set $S$. Fix two elements $a$ and $h$ in $\Gamma$, assume $a \neq 1$, and denote by $Z(a)$ the centralizer of $a$ in $\Gamma$. There are constants $C,D$ such that for any $g \in \Gamma$, we have
	$$\abs{hZ(a)g \cap B_n}, \abs{gZ(a)h \cap B_n} \leq C(n + \abs{h}) + D $$
where $B_n$ is the ball of radius $n$ in $\Gamma$ with respect to $S$.
\end{lemma}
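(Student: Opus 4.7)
The plan is to exploit the fact that in a torsion-free hyperbolic group $\Gamma$, the centralizer of any non-trivial element $a$ is infinite cyclic, say $Z(a) = \langle z \rangle$, and that $z$ has positive stable translation length.

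First, I would write every element of $hZ(a)g$ as $hz^k g$ for $k \in \mathbb{Z}$, so that counting elements of $hZ(a)g \cap B_n$ amounts to counting integers $k$ for which $|hz^kg| \leq n$. The naive estimate $|hz^kg| \geq |z^k| - |h| - |g|$ gives a bound depending on $|g|$, which is too weak. The key trick is instead to compare two such elements: if $hz^{k_1}g$ and $hz^{k_2}g$ both lie in $B_n$, then
\[
(hz^{k_1}g)(hz^{k_2}g)^{-1} = h z^{k_1 - k_2} h^{-1}
\]
has length at most $2n$ by the triangle inequality, and the $g$'s have cancelled. Conjugating by $h$ yields $|z^{k_1-k_2}| \leq 2n + 2|h|$.

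Next I would invoke the standard fact that in a hyperbolic group, any infinite-order element $z$ has positive stable translation length $\tau(z) > 0$ (well-defined since $\Gamma$ is torsion-free and $a \neq 1$ forces $z$ to have infinite order), and by subadditivity of $|z^n|$ together with Fekete's lemma, $|z^m| \geq \tau(z) |m|$ for every $m \in \mathbb{Z}$. Combining with the previous inequality gives
\[
|k_1 - k_2| \leq \tau(z)^{-1}(2n + 2|h|),
\]
so all integers $k$ yielding an element of $hZ(a)g \cap B_n$ lie in an interval of length at most $2\tau(z)^{-1}(n + |h|)$. Hence their number is bounded by $2\tau(z)^{-1}(n + |h|) + 1$, giving the desired bound with $C = 2\tau(z)^{-1}$ and $D = 1$.

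The case of $gZ(a)h$ is symmetric: writing its elements as $gz^kh$ and taking $(gz^{k_1}h)^{-1}(gz^{k_2}h) = h^{-1}z^{k_2-k_1}h$ instead, the same computation goes through. The only step requiring external input is the positivity of $\tau(z)$, which is a classical property of infinite-order elements in hyperbolic groups, so I do not foresee a serious obstacle here — the whole argument is essentially a one-line triangle-inequality trick once the cyclic centralizer and positive translation length are in hand.
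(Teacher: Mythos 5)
Your proposal is correct and takes essentially the same approach as the paper's: the core observation in both is that two elements $h z_1 g$ and $h z_2 g$ of the same double coset differ by an element $h(z_1 z_2^{-1})h^{-1}$ whose $Z(a)$-part can therefore be bounded in length by something of the order $2(n+\abs{h})$, independently of $g$, and then one invokes linear growth of cyclic subgroups. The only cosmetic difference is that the paper fixes a shortest element $u_0 = h z_0 g$ of the coset and measures all others against $g_0 = z_0 g$, whereas you compare arbitrary pairs directly; and the paper cites linear growth of $Z(a)$ as a black box while you derive it from positivity of the stable translation length via Fekete's lemma. Both variants yield the same linear-in-$(n+\abs{h})$ bound with constants independent of $g$.
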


\begin{proof} Cyclic subgroups of hyperbolic groups grow linearly, hence there exist strictly positive constants $C_a, D_a$ (depending only on $a$) such that for $n$ large enough we have 
	$$\abs{B_n \cap Z(a)} \leq C_an +D_a$$
	
	Let $z_0 \in Z(a)$ be such that $u_0 =hz_0g$ is the shortest element in $hZ(a)g$. Let $g_0 = h^{-1}u_0 = z_0g$ - note that $hZ(a)g = hZ(a)g_0$. If $hZ(a)g \cap B_n$ is non empty, then in particular $\abs{u_0} \leq n$ so $\abs{g_0} \leq \abs{h} + n$. Now if $u \in hZ(a)g \cap B_n$, then  $u=hzg_0$ for some $z \in Z(a)$, hence $z=h^{-1}ug^{-1}_0$ so $\abs{z} \leq \abs{h} + n + \abs{g_0} \leq 2(\abs{h} +n)$. There are at most $C_a(2(\abs{h} +n)) +D_a$ elements $z$ of $Z(a)$ which satisfy this, so we get
	$$ \abs{hZ(a)g \cap B_n} \leq  C_a(2(\abs{h} +n)) +D_a$$
The proof for the set $\abs{gZ(a)h \cap B_n}$ is similar.
\end{proof}

\begin{lemma} Let $\Gamma$ be a non cyclic torsion free hyperbolic group, fix $S$ a finite generating set. Let $a \in \Gamma$, and denote by $Z(a)$ the centralizer of $a$ in $\Gamma$. Consider a set $R_{Z(a)}$ of shortest (with respect to $S$) representatives of the cosets $hZ(a)$ for $h \in \Gamma$. Then $R_{Z(a)}$ has exponential growth.
\end{lemma}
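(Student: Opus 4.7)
The plan is to combine the uniform linear bound on coset-ball intersections from the previous lemma with the well-known exponential growth of non-elementary torsion-free hyperbolic groups.

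First I would dispose of the trivial case and clarify the setup: we may assume $a\neq 1$ (otherwise $Z(a)=\Gamma$ and there is only one coset, making the statement vacuously fail, so this case must be implicitly excluded as in the preceding lemma). Since $\Gamma$ is torsion-free hyperbolic and non-cyclic, a standard Tits-alternative-type fact (e.g.\ via ping-pong on the boundary, or the Gromov--Ghys theorem that any torsion-free non-elementary hyperbolic group contains a non-abelian free subgroup) gives that $\Gamma$ has \emph{exponential} growth: there exist constants $K>0$ and $\lambda>1$ such that $\lvert B_n\rvert \geq K\lambda^n$ for all $n$ large enough.

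Next I would apply the previous lemma with $g=1$: for any $h\in\Gamma$, we have the linear bound
\[
\lvert hZ(a)\cap B_n\rvert \;\leq\; C(n+\lvert h\rvert)+D.
\]
Now let $h$ be the shortest representative of the coset $hZ(a)$, i.e.\ $h\in R_{Z(a)}$. Whenever $hZ(a)\cap B_n$ is non-empty, the element $h$ itself lies in $B_n$ by minimality, hence $\lvert h\rvert \leq n$. Therefore each coset meeting $B_n$ contributes at most $2Cn+D$ elements to $B_n$.

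Finally, counting: since every element of $B_n$ lies in exactly one coset of $Z(a)$ and each coset meeting $B_n$ has at most $2Cn+D$ elements inside $B_n$, we get
\[
\lvert R_{Z(a)}\cap B_n\rvert \;\geq\; \frac{\lvert B_n\rvert}{2Cn+D} \;\geq\; \frac{K\lambda^n}{2Cn+D},
\]
which grows exponentially in $n$. The only subtlety is ensuring that the polynomial factor $n$ in the denominator cannot suppress the exponential growth, but since $\lambda>1$ we have $\lambda^n/n\to\infty$ exponentially fast (e.g.\ by comparing with $\lambda^{n/2}$), so the bound is genuinely exponential. There is no serious obstacle here; the main conceptual ingredients are simply that centralizers of non-trivial elements in torsion-free hyperbolic groups are virtually cyclic (hence have linear growth, used in the previous lemma) while the ambient group has exponential growth.
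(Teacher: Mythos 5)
Your proof is correct and takes essentially the same approach as the paper's: a linear bound on coset intersections with balls (from the previous lemma) combined with exponential growth of $\Gamma$, finished by a counting argument. The only cosmetic difference is in how the previous lemma is invoked: the paper applies it with $h=1$, immediately getting the uniform bound $\abs{gZ(a)\cap B_n}\leq Cn+D$ across all left cosets $gZ(a)$, whereas you take $g=1$ and recover uniformity by observing that a shortest coset representative $h$ whose coset meets $B_n$ satisfies $\abs{h}\leq n$. Both are fine; note only that your route implicitly uses that the constants $C,D$ in the previous lemma depend only on $a$ and not on $h$ (true from its proof, as the paper itself points out), while the paper's choice $h=1$ sidesteps the issue.
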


\begin{proof} Applying Lemma \ref{GrowthDoubleCoset} with $h=1$, we get that there exists constants $C, D$ depending only on $a$ such that for any  $g \in R_{Z(a)}$  then $\abs{ gZ(a) \cap B_n} \leq Cn +D$.

We can write $\Gamma$ as the disjoint union of the cosets $gZ(a)$ for $g \in R_{Z(a)}$. Since the growth of each of these cosets is bounded by a common linear function, and since $\Gamma$ has exponential growth, we must have that the number of these cosets meeting the ball of radius $n$ grows exponentially with $n$. But this implies that the growth of $R_{Z(a)}$ is exponential. 
\end{proof}

Let $G$ be a non cyclic torsion free hyperbolic group, and suppose $G= A*B$. For any $g \in A$, denote by $\theta_g$ the automorphism of $G$ which restricts to the identity on  $A$ and to conjugation by $g$ on $B$.

\begin{lemma} Let $G$ be a torsion free hyperbolic group, and suppose $G= A*B$ with $A$ non cyclic. Let $u \in G$ be non elliptic in this splitting.

Then the set $O=\{ \theta_g(u) \mid g \in A, g \neq 1\}$ has exponential growth.
\end{lemma}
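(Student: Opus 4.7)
My plan is to combine an injectivity statement for the map $\phi \colon A \to G$ given by $g \mapsto \theta_g(u)$ with a linear distortion bound, and then invoke the exponential growth of $A$. Concretely, once we know that $\phi$ is injective and that $|\theta_g(u)|_G \leq C(|g|_G + |u|_G)$ for some constant $C$, we will have $|O \cap B_{Cn}| \geq |A \cap B_n| - 1$; and since $A$ is a non-cyclic free factor of the torsion-free hyperbolic group $G$, it is quasiconvex in $G$ and itself a (non-elementary) torsion-free hyperbolic group, hence of exponential growth in the word metric induced from $G$.

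For the injectivity, the strategy is to use the normal form theorem in $G = A*B$. Write $u = s_1 s_2 \cdots s_n$ in normal form; non-ellipticity of $u$ gives $n \geq 2$. The automorphism $\theta_h$ replaces each $B$-syllable $b_i$ by $h b_i h^{-1}$, and after collecting adjacent $A$-letters one reads off a candidate normal form for $\theta_h(u)$ whose middle $A$-syllables have the shape $h^{-1} a_i h$ (always nontrivial, since $a_i \neq 1$) and whose extremal $A$-syllables are of the form $s_1 h$, $h^{-1} s_n$, or a lone $h^{\pm 1}$, depending on whether $s_1$ and $s_n$ lie in $A$ or in $B$. A four-way case analysis on the pair $(s_1, s_n)$ then shows that $\theta_h(u) = u$ forces $h = 1$: in the cases where both extremal $A$-syllables refine honest $A$-syllables of $u$, comparing syllable-by-syllable yields $s_1 h = s_1$ and hence $h = 1$; in the remaining cases the normal form of $\theta_h(u)$ carries an extra boundary $A$-letter not present in $u$'s normal form (or begins and ends with the wrong factor), which is impossible unless $h = 1$. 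The key point is that middle syllables $h^{-1} a_i h$ can never vanish, so no sneaky internal cancellation can produce an accidental coincidence.

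The linear bound $|\theta_g(u)|_G \leq |u|_G + 2k|g|_G$, where $k$ is the number of $B$-syllables of $u$, is immediate from the explicit formula for $\theta_g(u)$. Combined with injectivity and with the exponential growth of $A \cap B_n$ inherited (via quasiconvexity) from $A$'s own word metric, this concludes the argument. The main technical delicacy is the injectivity case analysis: while each sub-case is elementary, one must simultaneously control possible cancellations at both endpoints of the normal form to be sure that a genuine equality $\theta_h(u) = u$ cannot be engineered by choosing $h$ as some particular boundary element.
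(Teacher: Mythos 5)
Your proposal is correct and follows essentially the same route as the paper: injectivity of $g \mapsto \theta_g(u)$ via the normal form theorem in $A*B$, a linear bound $|\theta_g(u)| \leq C|g| + D$, and the exponential growth of $A$ inside $G$ (using that $A$ is non-cyclic, hence a non-elementary hyperbolic group, and quasiconvex). The paper simply states the injectivity ``by a normal form argument'' without spelling it out; your case analysis on the extremal syllables and the observation that middle syllables $h^{-1}a_i h$ cannot vanish is a correct and complete expansion of that step.
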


\begin{proof} Note that $\theta_g(u) = \theta_{g'}(u)$ iff $g=g'$ (by a normal form argument). Now there exist constants $C,D$ depending on $u$ only such that $\abs{\theta_g(u)} \leq C\abs{g} +D$

For any $g \in A \cap B_{(n-D)/C}$ we have that $\abs{\theta_g(u)} \leq n$ so $\abs{O \cap B_n} \geq \abs{A \cap B_{(n-D)/C}}$, which grows exponentially with $n$. 
\end{proof}

We can now prove Proposition \ref{AutOfFreeProducts}:
\begin{proof} Note that each factor $U_i$ of the decomposition of $U$ is hyperbolic (as a retract of a hyperbolic group). Grouping the factors together, we can find a decomposition $U = A *B$ where $u$ is not elliptic, and $A$ is not cyclic. Let $S_A, S_B$ be generating sets for $A,B$ respectively and let $S=S_A \cup S_B$.

We will show that the set $O= \{ \theta_g(u) \mid g \in A\}$ has exponential conjugacy growth with respect to $S$, this is enough to prove the result. Assume without loss of generality that $u=a_1b_1 \ldots a_lb_l$ with $l \geq 1$ and $a_i, b_i$ non trivial elements from $A,B$ respectively.

Note first that $\theta_g(u)$ and $\theta_{g'}(u)$ are conjugate if and only if $\theta_{g'g^{-1}}(u)$ is conjugate to $u$ (using for example that for any $g,h \in A$ we have $\theta_g \circ \theta_{h} = \theta_{hg}$ and $\theta^{-1}_{h} = \theta_{h^{-1}}$). Setting $h=g'g^{-1}$, we see that $\theta_h(u)$  is conjugate to $(h^{-1}a_1 h) b_1 (h^{-1}a_2 h) \ldots (h^{-1}a_lh) b_l$, and we can assume without loss of generality that this is a shortest normal form (with respect to the splitting $A*B$) in the conjugacy class of $\theta_h(u)$. We see by a normal form argument that if $\theta_h(u)$ is conjugate to $u$, we must have $h^{-1} a_i h = a_j$ for some $i,j$. Note now that for any $i, j$, two solutions to the equation $xa_ix^{-1}=a_j$ differ by an element of $Z(a_i)$. If there exists such a solution, pick one and denote it by $h_{ij}$. We see that if $\theta_{g'}(u)$ is conjugate to $\theta_{g}(u)$ then $g'g^{-1}$ lies in one of the cosets $h_{ij}Z(a_i)$, hence $g'$ lies in $h_{ij}Z(a_i)g$.

By Lemma \ref{GrowthDoubleCoset}, there exist constants $C_1, D_1$ such that for any $i,j$ and $g \in A$, the growth of $\abs{h_{ij}Z(a_i)g}$ is bounded by the linear function $C_1n + D_1$.

Suppose $x$ lies in $O \cap [\theta_g(u)] \cap B_n$, then $x = \theta_{g'}(u)$ for some $g'$ in one of the double cosets $h_{ij}Z(a_i)g$. Moreover, we must have $\abs{g'} \leq \abs{x} + K \leq n +K$ for some constant $K$ depending only on $u$: indeed, one can see for example that $$\abs{\theta_{g'}(u)} = \abs{(a_1g')b_1 ((g')^{-1}a_2g') \ldots ((g')^{-1}a_lg')b_l (g')^{-1}} \geq \abs{a_1g'} \geq \abs{g'} - \abs{a_1}$$
so taking $K = \abs{a_1}$ proves the claim (note that here our specific choice of generating set comes into play). 

Hence we get
$$\abs{O \cap [\theta_g(u)] \cap B_n} \leq \abs{h_{ij}Z(a_i)g \cap B_{n+K}} \leq C_1(n+K) +D_1$$
that is, the growth of $O \cap [\theta_g(u)]$ is bounded by a linear function whose constants depend only on $u$. 

Since $O$ has exponential growth, and since the growth of the intersection of $O$ with any conjugacy class is bounded by a common linear function, we see that $O$ has exponential conjugacy growth.
\end{proof}

\begin{lemma} \label{AutOfSurfaceGroup} Let $G$ be a freely indecomposable, torsion free hyperbolic group, and let $\Lambda$ denote its maximal cyclic JSJ decomposition. 
Let $g$ be an element corresponding to a simple closed curve on a surface of $\Lambda$. Then the conjugacy growth of the orbit of $g$ under $\Aut(G)$ is at most polynomial.
\end{lemma}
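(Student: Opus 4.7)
The plan is to reduce to the modular group and then exploit the classical polynomial growth of simple closed curves on a compact surface. Let $\Sigma$ denote the surface of $\Lambda$ on which $g$ lies. By Theorem \ref{ModFIInAut}, $\Mod(G)$ has finite index in $\Aut(G)$, so the $\Aut(G)$-orbit of $g$ is a finite union of $\Mod(G)$-orbits; it therefore suffices to bound the conjugacy growth of each such orbit by a polynomial.

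The key structural claim I would prove is: for every $\sigma \in \Mod(G)$, the element $\sigma(g)$ is $G$-conjugate to an element of $\pi_1(\Sigma)$ that again corresponds to a simple closed curve on $\Sigma$. To establish this I would analyze the Dehn twist generators of $\Mod(G)$ one by one. A Dehn twist along an edge of $\Lambda$ acts by (an edge-element) conjugation on each vertex group, hence trivially on conjugacy classes. A Dehn twist along a simple closed curve on a surface $\Sigma'$ of $\Lambda$ acts on the socket vertex group containing $\Sigma'$ as the classical Dehn twist on the underlying surface, extended by the identity to the cyclic extensions of the boundary subgroups, and by conjugation on every other vertex group. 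Thus each generator preserves the socket vertex group containing $\pi_1(\Sigma)$ up to $G$-conjugation, and so does any composition $\sigma \in \Mod(G)$. After un-conjugating, $\sigma$ restricts to an automorphism of this socket, which in turn restricts to an automorphism of $\pi_1(\Sigma)$ preserving its peripheral structure. By Dehn-Nielsen-Baer for surfaces with boundary, such an automorphism is induced by a self-homeomorphism of $\Sigma$, which necessarily sends simple closed curves to simple closed curves.

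It then remains to count the $G$-conjugacy classes of simple closed curve elements on $\Sigma$ that meet the ball $B_n$. I would use that $\pi_1(\Sigma)$ is quasi-convex in $G$ (being a vertex group of a JSJ of a hyperbolic group), hence quasi-isometrically embedded, so that stable translation lengths in $G$ and in $\pi_1(\Sigma)$ are bi-Lipschitz equivalent on $\pi_1(\Sigma)$. In a hyperbolic group the shortest conjugate length differs from the stable length by a bounded additive constant; combining, if the $G$-conjugacy class of $g' \in \pi_1(\Sigma)$ meets $B_n$, then some $\pi_1(\Sigma)$-conjugate of $g'$ has $\pi_1(\Sigma)$-word-length at most $Cn + D$. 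The classical polynomial growth of isotopy classes of simple closed curves on a compact surface (Birman-Series) then yields the desired polynomial bound. The main obstacle I anticipate is the structural claim in the previous paragraph: rigorously verifying that every composition of Dehn twists acts on $\pi_1(\Sigma)$ by a surface-homeomorphism-induced automorphism requires careful bookkeeping in the socket structure of the maximal cyclic JSJ, where the boundary subgroups of $\Sigma$ appear inside the sockets rather than as edge groups of $\Lambda$.
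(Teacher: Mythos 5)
Your proposal follows essentially the same route as the paper: reduce to $\Mod(G)$ by the finite index result, observe that modular automorphisms of $G$ restrict (up to conjugation) to mapping-class-induced automorphisms of $\pi_1(\Sigma)$ hence preserve the set of simple closed curve conjugacy classes, and then invoke the polynomial bound on simple closed curves (you cite Birman--Series, the paper cites Rivin --- both suffice). The paper's proof is considerably terser: it simply asserts that elements of $\Mod(G)$ restrict on surface vertex groups to modular automorphisms of $S$ composed with conjugation, and concludes in one line; your detour through Dehn--Nielsen--Baer is a legitimate but heavier way to reach the same conclusion, since a modular automorphism of a surface group is by definition a composition of Dehn twists and therefore already geometric. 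Your quasi-convexity argument for translating word-length bounds from $G$ to $\pi_1(\Sigma)$ is a genuine detail the paper leaves implicit and is worth having; it plugs a gap that a careful reader of the paper would otherwise need to fill. The "main obstacle" you anticipate --- verifying that compositions of Dehn twists preserve the socket up to conjugation and induce surface-geometric automorphisms on $\pi_1(\Sigma)$ --- is exactly the standard fact the paper takes for granted, and your generator-by-generator analysis does resolve it correctly.
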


Let $\Sigma$ be a hyperbolic surface. It is a theorem of \cite{RivinSCC} that there exists a polynomial $P$ such that the number of closed simple geodesic curves on $\Sigma$ of length at most $L$ is bounded above by $P(L)$. Thus the conjugacy growth of the set of elements of $\pi_1(\Sigma)$ representing a simple closed curve of $\Sigma$ is at most polynomial.

\begin{proof} On a surface vertex group $S$ of $\Lambda$, elements of $\Mod(G)$ restrict to the composition of a conjugation together with a modular automorphism of $S$. In particular, an element $g$ corresponding to a simple closed curve on the surface is sent to a conjugate in $G$ of an element of $S$ also corresponding to a simple closed curve. Hence the orbit of $g$ under $\Mod(G)$ has polynomial conjugacy growth. Since $\Mod(G)$ has finite index in $\Aut(G)$, the same is true of the orbit of $g$ under $\Aut(G)$.
\end{proof}

\subsection{A first obstacle to homogeneity: proper subsurfaces floor-retracting}

We can now formulate the first obstruction to homogeneity of a torsion-free hyperbolic group

 \begin{prop} \label{SubsurfaceRetractNonHomogeneous} Let $G$ be a freely indecomposable torsion free hyperbolic group which is not the fundamental group of a closed surface of characteristic $-2$ or $-1$, and denote by $\Lambda$ its JSJ decomposition. Suppose some surface $\Sigma$ of $\Lambda$ has a proper subsurface $\Sigma'$ which floor-retracts in $\Lambda$. Then $G$ is not homogeneous.
\end{prop}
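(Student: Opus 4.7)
The approach is to exhibit a proper subgroup $H \leq G$ and an element $w \in G$ satisfying $\tp^{G}(w) = \tp^{H}(w)$ but such that the conjugacy-orbit of $w$ under $\Aut(H)$ is strictly larger than its conjugacy-orbit under $\Aut(G)$. Picking any $w'$ in the $\Aut(H)$-orbit of $w$ that lies outside its $\Aut(G)$-orbit then gives two elements with the same type in $G$ but in distinct $\Aut(G)$-orbits, witnessing non-homogeneity.

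To construct $H$, I first invoke Lemma \ref{RetractingSubsurfaceDifferentBoundaries} to replace $\Sigma'$ by a proper subsurface $\tilde\Sigma \subseteq \Sigma$ which still floor-retracts in $\Lambda$ and does not contain all boundary components of $\Sigma$; then a component $\Sigma_0^c$ of the complement $\Sigma \setminus \tilde\Sigma$ shares at least one boundary circle with $\Sigma$. The floor-retraction yields a hyperbolic floor structure $(G, H, r)$ with associated graph of groups $\Lambda_{W}$ (for $W = \{v_{\tilde\Sigma}\}$) and retract $H = H_1 * \cdots * H_p$. The non-surface vertex groups $H_i$ of $\Lambda_W$ arise by amalgamating the free group $\pi_1(\Sigma_0^c)$ with adjacent non-surface vertex group(s) of $\Lambda$ along the cyclic subgroup(s) generated by boundary elements shared with $\Sigma$; since such a shared boundary element is primitive in $\pi_1(\Sigma_0^c)$, the amalgamation simplifies to a free product, exactly as in the introductory example following Definition \ref{SubsurfaceFloorRetractsDef}. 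Consequently, the Grushko decomposition of $H$ contains a nontrivial free factor $F$, corresponding geometrically to a subsurface $\Sigma^\circ \subseteq \Sigma_0^c$ obtained by capping off the boundary components shared with $\Sigma$.

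For $w$ I pick a commutator $[x,y]$ where $x,y$ are two elements of a free basis of $F$ corresponding to a genus-one subsurface of $\Sigma^\circ$, so that $w$ represents a simple closed curve on $\Sigma$. By Lemma \ref{AutOfSurfaceGroup} the orbit of $w$ under $\Aut(G)$ has polynomial conjugacy growth. Since $w \in F$ is not conjugate into any non-cyclic factor $U_i$ of the Grushko decomposition $H = U_1 * \cdots * U_t * F$, Proposition \ref{AutOfFreeProducts} gives that its orbit under $\Aut(H)$ has exponential conjugacy growth. Provided $H$ is non-cyclic --- which is the generic case, the non-surface vertex groups of $\Lambda$ being non-trivial --- the floor $(G, H, r)$ exhibits $G$ as a hyperbolic tower over $H$, so by Theorem \ref{ElementarySubgroups} the inclusion $H \hookrightarrow G$ is elementary and $\tp^{G}(w) = \tp^{H}(w)$; the growth discrepancy then supplies the desired $w'$. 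In the degenerate case where $H$ is cyclic (only possible when $p=1$ and $H_1 \simeq \Z$), one appeals instead to Proposition \ref{TypesOfFactorOfSubtowerArePreserved} after passing to $G * \Z$ and $H * \Z$, where the same growth argument applies.

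The main obstacle is the structural step: rigorously verifying that the amalgamations induced by the floor retraction of $\tilde\Sigma$ contribute to $H$ a free factor $F$ of rank at least $2$, so that $F$ contains a commutator of basis elements representing a simple closed curve on $\Sigma$. This is where the exclusion of small closed surfaces in the hypothesis plays its role, ensuring that the complementary subsurface $\Sigma_0^c$ has enough Euler characteristic for the argument to go through.
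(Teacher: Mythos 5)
Your approach shares the overall strategy of the paper (conjugacy-growth comparison plus criterion for non-homogeneity), but it diverges at the two technically delicate points, and at both of them there is a genuine gap.

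First, your invocation of Theorem~\ref{ElementarySubgroups} is incorrect. A single hyperbolic floor $(G,H,r)$ with $H = H_1 * \cdots * H_p$ does \emph{not} exhibit $G$ as a hyperbolic tower over $H$ when $p\geq 2$: by the definition of a floor relative to $H$, one needs $H$ to lie inside a single non-surface vertex group, and by Remark~\ref{FloorIsNotTower} this can fail. Consequently $H$ is in general not an elementary subgroup of $G$, so $\tp^G(w) = \tp^H(w)$ does not follow. The paper avoids this by never claiming that the retract is elementary; instead it works with Proposition~\ref{CriterionForNonHomogeneity} (whose proof in turn uses Proposition~\ref{TypesOfFactorOfSubtowerArePreserved} through $H*\Z$), which only preserves types of tuples lying in one fixed factor and only requires a floor, not a tower. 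Your argument needs the same device, not the full elementarity statement.

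Second, your choice of $w$ as a commutator $[x,y]$ in a free factor $F$ of $H$ requires that $F$ have rank at least~$2$, and more specifically that it carry a genus-one piece of the capped-off complementary surface. This is \emph{not} guaranteed, and the hypothesis excluding $G$ from being a closed surface group of characteristic $-1$ or $-2$ does not help: those hypotheses constrain $G$ itself, not the complementary subsurface $\Sigma\setminus\tilde\Sigma$, which can perfectly well be a pair of pants or a twice-punctured projective plane inside a large $\Lambda$. In that situation the free part $F$ is trivial and there is no commutator to choose. The paper sidesteps this by taking for the test element not a commutator but a \emph{boundary curve} $z_1$ of $\tilde\Sigma$ (which always exists and is always a simple closed curve on $\Sigma$), exploiting that $z_1$ fails to be elliptic in the induced nontrivial free decomposition of $G_0$, so Proposition~\ref{AutOfFreeProducts} gives exponential conjugacy growth of its $\Aut(G_0)$-orbit. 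The degenerate situation you worry about corresponds in the paper to the case $G_0\simeq \F_2$, which it handles by a separate explicit case analysis: there $C_1$ is generated by a proper power $z=b^k$ of a primitive element $b$, the set of conjugacy classes of primitive elements of $\F_2$ is infinite, while $z$ is an edge group element of $\Lambda$ so Lemma~\ref{OrbitEdgeElement} bounds its $\Aut(G)$-orbit to finitely many conjugacy classes. Your proposal has no fallback for this case, and attributing it to the hypothesis on $G$ is a misdiagnosis.

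In short: replace the elementarity claim by Proposition~\ref{CriterionForNonHomogeneity}, replace the commutator by a boundary curve of the retracting subsurface, and treat the small-complement case ($G_0\simeq\F_2$) separately, and you recover the paper's argument.
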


\begin{proof} Non homogeneity of surface groups associated to surfaces of characteristic at most $-3$ is proven in \cite{PerinSklinosHomogeneity}. We may thus assume that $G$ is not the fundamental group of a closed surface, in particular, $\Sigma$ has nonempty boundary. Recall that $\Lambda_{\Sigma}$ denotes the graph of groups obtained by collapsing all edges of $\Lambda$ which are not adjacent to the vertex $v$ corresponding to $\Sigma$.

Let $\Delta(S, {\cal C})$ be the graph of groups of fundamental group $S =\pi_1(\Sigma)$ which is dual to the set of boundary curves ${\cal C}$ of $\Sigma'$ which are not parallel to the boundary components of $\Sigma$. Denote by $\Lambda'$ the graph of groups obtained from $\Lambda_{\Sigma}$ by refining $v$ by $\Delta(S, {\cal C})$. Note that since $\Sigma'$ is a proper subsurface, the graph of groups $\Delta(S, {\cal C})$ is not reduced to a point.

The fact that $\Sigma'$ floor-retracts in $\Lambda$ means that there is a structure of (extended) hyperbolic floor $(G, G',r)$ for $G$, whose associated free product decomposition $G'=G_0*G_1* \ldots * G_m$ is such that the factors $G_i$ are the fundamental groups of connected components of the complement of the vertex associated to $\Sigma'$ in $\Lambda'$.

\begin{figure}[ht!] \label{SubSurfaceRetractsFig}
	\centering
	\includegraphics[width=.9\textwidth]{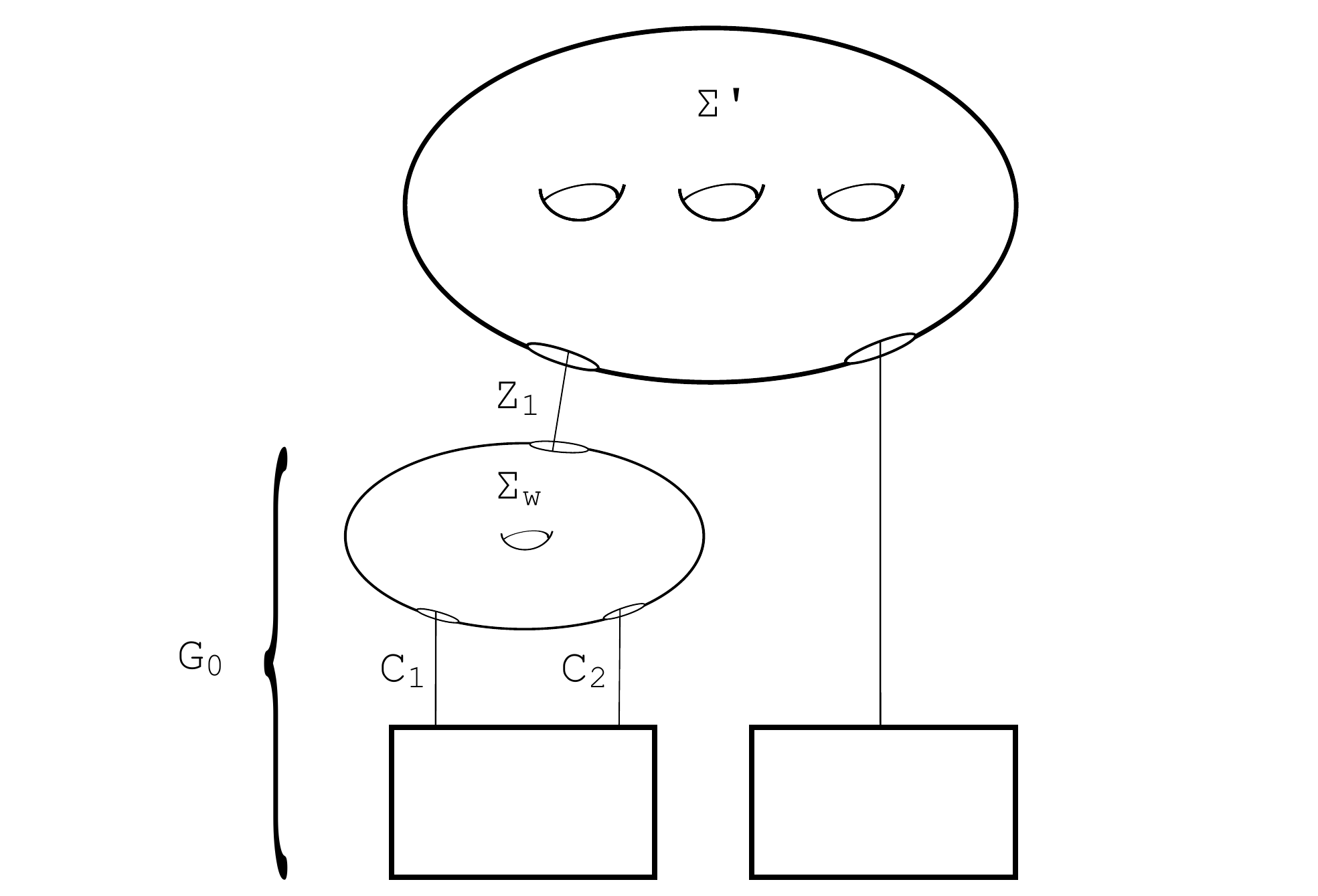}
	\caption{The subsurface $\Sigma'$ retracts}
\end{figure}

Note that by Lemma \ref{RetractingSubsurfaceDifferentBoundaries}, we may assume that there is a component of $\Sigma - \Sigma'$ which contains a boundary component of $\Sigma$. Let $w$ be a vertex of $\Delta(S, {\cal C})$ corresponding to such a subsurface $\Sigma_w$, without loss of generality assume that the fundamental group of the connected component in which $w$ lies is $G_0$ (see Figure \ref{SubSurfaceRetractsFig}). Choose $Z_1, \ldots, Z_p$ representatives of the conjugacy classes of edge groups of $\pi_1 (\Sigma_w)$ whose corresponding edges are adjacent to the vertex of $\Sigma'$, and $C_{1}, \ldots, C_q$ representatives for the other classes of boundary subgroups (note that $p>0$ and $q>0$ by our assumption on $\Sigma'$). Relatively to $Z_{2}, \ldots, Z_p, C_1, \ldots, C_q $, the group $S_w = \pi_1(\Sigma_w)$ admits a decomposition of the form
$$Z_2 * \ldots *Z_p*C_1* \ldots *C_q *F \; (\dagger)$$
(where $F$ is a free group),  while relatively to $Z_{1},\ldots, Z_p, C_1, \ldots, C_q$ it doesn't admit any non-trivial free decomposition.

The decomposition $(\dagger)$ induces a non trivial free decomposition $U_1 * \ldots * U_r * \F_s$ of $G_0$: indeed, if $\Lambda'_0$ denotes the subgraph of subgroups of $\Lambda'$ containing $w$ whose fundamental group is $G_0$, it can be refined at $w$ by the free product decomposition of $S_w$ relative to $Z_{2}, \ldots, Z_p, C_1, \ldots, C_q$ since the edge groups adjacent to $w$ in $\Lambda'_0$ correspond exactly to $C_{1}, \ldots, C  _q$. Moreover, in this decomposition of $G_0$, the subgroup $Z_1$ is not elliptic. Let $z_1$ denote a generator for $Z_1$.

Suppose first that $G_0$ is not a free group of rank $2$. By Proposition \ref{AutOfFreeProducts}, the orbit of $z_1$  under $\Aut(G_0)$ has exponential conjugacy growth in $G_0$, therefore it has exponential conjugacy growth in $G$ since elements of $G_0$ which are conjugate in $G$ are conjugate in $G_0$ ( $G_0$ is a retract of $G$). On the other hand, by Lemma \ref{AutOfSurfaceGroup}, the orbit of $z_1$ under $\Aut(G)$ has polynomial conjugacy growth in $G$. This implies that there exists an element $y_1$ of $G_0$ which is in the orbit of  $z_1$  under $\Aut(G_0)$ but not in its orbit under $\Aut(G)$. By Proposition \ref{CriterionForNonHomogeneity}, $G$ is not homogeneous.

To deal with the case where $G_0 \simeq \F_2$ we need a slightly more detailed analysis. Note that in general if the surface $\Sigma_w$ associated to $w$ admits a presentation of the form $S(g,m,p+q)$, then the group $G_0$ is of the form
$$H_1 * \ldots * H_k * Z_2 * \ldots *Z_p* \F_{2g} * \F_m * \langle t_1 \rangle * \ldots *\langle t_{q-k} \rangle $$
where the factors $H_i$ are the groups of the non surface vertices of $\Lambda'$ adjacent to $w$. 

Note that  we have $k \geq 1$ and because $\Sigma_w$ is not a cylinder, either $g \neq 0$, or $m \neq 0$, or $p>1$, or $q>1$.
In particular, if $G_0$ is free of rank $2$, there are four   possible cases
 \begin{enumerate}
\item $k=2, p=1, q=2, g=m=0$ (then $\Sigma_w$ is a pair of pants and $H_1 \simeq H_2 \simeq \Z$)
\item $k=1, p=2, q=1, g=m=0$  (then $\Sigma_w$ is a pair of pants and $H_1 \simeq \Z$);
\item $k=1, p=1, q=2, g=m=0$  (then $\Sigma_w$ is a pair of pants and $H_1 \simeq \Z$);
\item $k=1, p=1, q=1, g=0, m=1$  (then $\Sigma_w$ is a twice punctured projective plane and $H_1 \simeq \Z$). 
 \end{enumerate}
We see that in all these cases, the boundary subgroup $C_{1}$ is generated by a power $z=b^k$ of a primitive element $b$ of $G_0$. The set of conjugacy classes of primitive elements of $\F_2$ is infinite, hence the orbit $\{ \beta^k \mid \beta \textrm{ primitive }\}$ of $b$ in $G_0$ meets infinitely many conjugacy classes. On the other hand, $C_{1}$ is an edge group of $\Lambda$, hence by Lemma \ref{OrbitEdgeElement} the orbit of $z$ under $\Aut(G)$ consists of finitely many conjugacy classes.
\end{proof}

\section{Second obstruction to homogeneity: JSJ decompositions of tower retracts refine} \label{JsjDecompositionsOfFloorRetracts}

\label{GrowingJSJSection}

In this section, we exhibit another obstacle to homogeneity of a freely indecomposable torsion free hyperbolic group $G$. In view of the previous section, for a group to be homogeneous, it is necessary that the decomposition $\Gamma$ associated to any of its floor structure $(G,G', r)$ corresponds to a floor-retracting subset $\Sigma_1, \ldots, \Sigma_p$ of full surfaces of the JSJ decomposition $\Lambda$ of $G$. In this section, we show that moreover, it is necessary that the JSJ decompositions of the non cyclic free factor of $G'$
are exactly the connected components of the complement in $\Lambda$ of the set of vertices corresponding to $\Sigma_1, \ldots, \Sigma_p$.

\begin{prop} \label{InfiniteConjClasses} Let $G$ be a torsion-free hyperbolic group, and let $g \in G$. Assume moreover that if $G$ is isomorphic to $\F_2 = \F(a,b)$ then $g$ is not conjugate to an element of $\langle [a,b] \rangle$. If there exists a non trivial free splitting of $G$ or a splitting over a maximal cyclic group in which $g$ is not elliptic, then the orbit of $g$ under $\Aut(G)$ contains infinitely many non conjugate elements.
\end{prop}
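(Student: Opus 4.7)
The plan is to split the proof into two main cases according to the hypothesis: whether $g$ is non-elliptic in a non-trivial free splitting of $G$ or only in a splitting over a maximal cyclic subgroup.

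\textbf{Free splitting case.} If $g$ is non-elliptic in some non-trivial free splitting, then since the Grushko decomposition $G = G_1 * \ldots * G_k * \F_r$ refines every free splitting, $g$ is non-elliptic in it too; in particular $g$ lies in no conjugate of any freely indecomposable non-cyclic factor $G_i$. Provided $G \not\simeq \F_2$, I would apply Proposition \ref{AutOfFreeProducts} directly to the Grushko decomposition with $U_i = G_i$ and $\F = \F_r$, concluding that the $\Aut(G)$-orbit of $g$ has exponential conjugacy growth and hence meets infinitely many conjugacy classes. For the excluded case $G = \F_2$ I would use the model $\F_2 = \pi_1(\Sigma)$ where $\Sigma$ is a once-punctured torus. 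The hypothesis that $g$ is not conjugate to a power of $[a,b]$ says exactly that $g$ represents a non-peripheral free homotopy class on $\Sigma$. Since any pseudo-Anosov element of $\mathrm{Out}(\F_2) \cong GL_2(\Z)$ grows the length of every non-peripheral free homotopy class exponentially under iteration, lifting such an element to $\Aut(\F_2)$ yields infinitely many distinct conjugacy classes in the orbit of $g$.

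\textbf{Maximal cyclic splitting case.} I would first reduce to a one-edge splitting $G = A *_C B$ (or HNN $A *_C$) with $C = \langle c \rangle$ maximal cyclic and $g$ still non-elliptic, by collapsing those edges of the graph of groups whose preimages in the Bass-Serre tree are not crossed by the axis of $g$. Let $\tau$ denote the Dehn twist by $c$. Writing $g$ in cyclically reduced normal form $g = a_1 b_1 \ldots a_k b_k$, a direct computation in the amalgam shows that $\tau^n(g)$ is conjugate in $G$ to the cyclically reduced word $(c^{-n} a_1 c^n) b_1 \ldots (c^{-n} a_k c^n) b_k$. Conjugacy of cyclically reduced words in $A *_C B$ is controlled by the classical conjugacy theorem for amalgams: two such words are conjugate iff one is obtained from the other by a cyclic rotation together with a compatible twist by elements of $C$ at each seam. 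Since $G$ is torsion-free hyperbolic and $C$ is maximal cyclic, the normalizer of $C$ in $G$ equals $C$ itself, so no $b_i \in B \setminus C$ can conjugate $C$ into $C$; this forces the $C$-twists to be trivial, and then the conjugacy $\tau^n(g) \sim \tau^m(g)$ would force $a_i \in Z_G(c^{n-m}) = C$, contradicting cyclic reducedness. Hence the elements $\{\tau^n(g)\}_{n \in \Z}$ lie in pairwise distinct conjugacy classes.

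\textbf{Main obstacle.} The principal subtlety is the $\F_2$ case, where Proposition \ref{AutOfFreeProducts} does not apply and one must invoke the dynamics of the $SL_2(\Z)$-action on free homotopy classes of the once-punctured torus. A secondary difficulty lies in the maximal cyclic case: the translation length of $g$ on the Bass-Serre tree of $\Gamma$ is \emph{preserved} under the Dehn twist $\tau$, so one cannot use it directly as a conjugacy invariant, and must instead appeal to the maximality of $C$ together with torsion-freeness of $G$ (via the self-normalizing property of maximal cyclic subgroups) in order to rule out spurious identifications among the conjugacy classes of the $\tau^n(g)$.
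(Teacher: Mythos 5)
Your overall plan coincides with the paper's: handle the free-splitting case via Proposition~\ref{AutOfFreeProducts}, handle $\F_2$ via pseudo-Anosov dynamics on the once-punctured torus (the paper cites a remark of Bestvina--Handel), and handle the maximal-cyclic case via Dehn twists and the amalgam conjugacy theorem of Lyndon--Schupp. The first two cases are fine.

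The maximal-cyclic case, however, is not rigorous as written, and the claimed logical chain is actually not what the correct argument establishes. You assert that self-normalization of $C$ ``forces the $C$-twists to be trivial,'' but this is not true in general: applying the conjugacy theorem to $\tau^n(g)\sim\tau^m(g)$ produces a conjugating element $c^{l}\in C$ together with a cyclic rotation, and there is no a priori reason for $l$ to vanish. Indeed, the paper's normal-form computation does \emph{not} conclude that the twist is trivial; it derives the nontrivial relation $l_n-l_m = n-m$ from the constraint $b_{i-1}c^{l_n-l_m+m-n}b_{i-1}^{-1}\in C$ together with malnormality of $C$, and only then, by a second application of the same normal-form comparison at the next syllable boundary, obtains $a_{i-1}c^{n-m}a_{i-1}^{-1}\in C$, hence $c^{n-m}=1$ and $n=m$. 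The step from ``no $b_i$ conjugates $C$ into $C$'' to ``the twists vanish'' to ``$a_i\in Z_G(c^{n-m})$'' skips exactly this bookkeeping, and the intermediate conclusion is false. Moreover, even granting trivial twists, a nontrivial cyclic rotation would still have to be ruled out before the centralizer conclusion follows. To make this case go through you need to track the $C$-corrections syllable by syllable around the cycle (as in Theorem~2.6 of Chapter~IV of Lyndon--Schupp) and use malnormality twice, once on a $B$-syllable and once on an $A$-syllable. The HNN case, which you do not address, needs a parallel (though routine) argument.
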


\begin{proof} If $G$ is not isomorphic to $\F_2$ and admits a free product decomposition in which $g$ is not elliptic, the result follows straight from Proposition \ref{AutOfFreeProducts}.

If $G \simeq \F_2 = \langle a,b \mid \rangle$ and $g$ is not conjugate into $\langle [a,b] \rangle$, we think of $G$ as the fundamental group of a punctured torus whose boundary subgroups are the conjugates of $\langle [a,b] \rangle$. By \cite[Remark 4.3]{BestvinaHandelTrainTracksFree}, 
if $f$ is an automorphism of $G$ corresponding to a pseudo-Anosov homeomorphism, then $\{f^k(g) \mid k \in \Z\}$ meets infinitely many conjugacy classes.

If $G$ admits a splitting as an amalgamated product $A*_CB$ over a maximal cyclic group $C = \langle c \rangle$ in which $g$ is not elliptic, we let $\tau$ be the Dehn twist by $c$. Up to conjugation, $g$ can be written in a normal form as 
$$g = a_1b_1\ldots a_kb_k$$ with $k \geq 1$, $a_i \in A\setminus C, b_i \in B\setminus C$ for all $i$. Then 
$$\tau^n(g) =  (a_1c^n) b_1 (c^{-n}a_2c^n)\ldots (c^{-n}a_k c^n)b_kc^{-n}$$ 
is also in normal form of length $k$. Since $g$ and $\tau^n(g)$ have the same length, if they are conjugates then by Theorem 2.8 of Chapter IV in \cite{LyndonSchupp} there are $i_n,l_n$ such that 
$$\tau^n(g)=c^{l_n}a_{i_n}b_{i_n}\cdots a_{i_n-1}b_{i_n-1}c^{-l_n}$$ 
(where indices are modulo $k$).
If for infinitely many $n$ the image $\tau^n(g)$ is conjugate to $g$ then there is an infinite subsequence for which $i_n=i$.
Assume then that $m,n$ are indices in this subsequence: $$\tau^n(g)=c^{l_n}a_ib_i\cdots a_{i-1}b_{i-1}c^{-l_n}$$ and 
$$\tau^m(g)=c^{l_m}a_ib_i\cdots a_{i-1}b_{i-1}c^{-l_m}$$ 
Now 
$$\tau^m(g)=\tau^{m-n}(\tau^n(g))=c^{l_n}a_ic^{m-n}b_ic^{n-m}\cdots a_{i-1}c^{m-n}b_{i-1}c^{n-m-l_n}$$ so
$$a_ib_i\cdots a_{i-1}b_{i-1}=(c^{l_n-l_m}a_ic^{m-n})b_i(c^{n-m}a_{i+1} c^{m-n})\cdots (c^{n-m}a_{i-1}c^{m-n})(b_{i-1}c^{n-m+l_m-l_n})$$
and by a normal form theorem (see Theorem 2.6 of Chapter IV in \cite{LyndonSchupp}) we have that:
$$b_{i-1}c^{l_n-l_m+m-n}b_{i-1}^{-1}\in C$$
Now $C$ is maximal cyclic in a torsion-free hyperbolic group, so
it is malnormal. Since $b_{i-1}\notin C$, we must have that $c^{l_n-l_m+m-n}$ is trivial, so $l_n-l_m=n-m$. By the same normal form argument, this implies that 
$$a_{i-1}c^{n-m}a_{i-1}^{-1}\in C$$ and since $a_{i-1}\notin C$ we have that $c^{n-m}=1$ so $n=m$ - a contradiction.
A similar argument proves the claim when $G$ is an HNN extension.

\end{proof}

\begin{prop} \label{GrowingJSJNonHomogeneous} 
Let $G$ be a freely indecomposable torsion free hyperbolic group with maximal-cyclic JSJ decomposition 
$\Lambda$. Let $\{\Sigma_1, \ldots, \Sigma_m\}$ be a set of surfaces of $\Lambda$ which floor-retracts, with corresponding vertices $\{v_1, \ldots, v_m\}$. Let $G_0$ be the fundamental group of some connected component $\Lambda_0$ of $\Lambda - \{v_1, \ldots, v_m\}$. Suppose one of the following cases occurs:
\begin{enumerate}
\item  $G_0$ is freely decomposable and not isomorphic to $\F_2$;
\item  $G_0 \simeq \F(a,b)$ and at least one of the edge groups of $\Lambda - \Lambda_0$ adjacent to $\Lambda_0$ is not conjugate to $\langle [a,b] \rangle$;
\item $\Lambda_0$ is not the maximal-cyclic JSJ decomposition of $G_0$.
\end{enumerate}

Then $G$ is not homogeneous.
\end{prop}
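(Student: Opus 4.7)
The plan is, in each case, to produce an element $c \in G_0$ and an automorphism $\psi \in \Aut(G_0)$ with $\psi(c) \notin \Aut(G)\cdot c$, but such that $c$ and $\psi(c)$ share the same type in $G$; this directly contradicts homogeneity. The equality of types will come from the floor structure $(G,H,r)$ provided by the floor-retracting hypothesis: by definition the retract $H$ admits a free product decomposition $H = G_0 * G_1 * \cdots * G_{s-1} * F$ (with $F$ a possibly trivial free group coming from loops in the collapsed graph), and Proposition \ref{TypesOfFactorOfSubtowerArePreserved} applied to this structure yields $\tp^{H * \mathbb{Z}}(\bar g) = \tp^G(\bar g)$ for any tuple $\bar g \in G_0$. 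Any $\psi \in \Aut(G_0)$ extends trivially to an automorphism of $H * \mathbb{Z}$ (identity on the other factors and on $\mathbb{Z}$), hence $\tp^G(c) = \tp^G(\psi(c))$.

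The candidate $c$ in all three cases is a generator of an edge group of $\Lambda$ adjacent to $\Lambda_0$ but not contained in $\Lambda_0$ --- call these the \emph{external} edge groups. Such a $c$ fixes an edge of $T_\Lambda$, so by Lemma \ref{OrbitEdgeElement} its $\Aut(G)$-orbit meets only finitely many $G$-conjugacy classes. On the other hand, since $G_0$ is a retract of $G$ (via $r$ followed by the free-product projection $H \to G_0$), two elements of $G_0$ that are $G$-conjugate are already $G_0$-conjugate. So it suffices to find an external edge generator $c$ whose $\Aut(G_0)$-orbit meets infinitely many $G_0$-conjugacy classes; this is exactly the conclusion of Proposition \ref{InfiniteConjClasses} once we exhibit a free or maximal cyclic splitting of $G_0$ in which $c$ is not elliptic (and, if $G_0 \simeq \F_2$, verify $c$ is not conjugate to a power of $[a,b]$).

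In Case 1, if every external edge group were elliptic in a non-trivial Grushko decomposition of $G_0$, we could refine $\Lambda_0$ inside $\Lambda$ using that decomposition to produce a non-trivial free splitting of $G$, contradicting free indecomposability of $G$; hence some external edge generator $c$ is non-elliptic in a free splitting of $G_0$, and Proposition \ref{InfiniteConjClasses} applies since $G_0 \not\simeq \F_2$. In Case 2, an external edge group being maximal cyclic forces its generator $c$ to be primitive in $\F_2$, so $c$ is non-elliptic in some HNN free splitting of $\F_2$ (any splitting in which $c$ is a stable letter), and the hypothesis that $c$ is not conjugate into $\langle [a,b] \rangle$ lets us invoke Proposition \ref{InfiniteConjClasses}. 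In Case 3, the final proposition of Section \ref{JSJSec} identifies $\Lambda_0$ with the maximal cyclic JSJ of $G_0$ relative to the external edge groups, so by Remark \ref{JSJVsRelJSJ} the assumption that $\Lambda_0$ is not the absolute maximal cyclic JSJ of $G_0$ forces some external edge group to be non-elliptic in a maximal cyclic or free splitting of $G_0$, producing the desired $c$ (the edge case $G_0 \simeq \F_2$ in Case 3 is absorbed by Case 2).

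I expect the main technical obstacle to lie in the refinement argument of Case 1: from a non-trivial Grushko decomposition of $G_0$ in which all external edge groups are elliptic, one must carefully splice in the free splitting of $G_0$ with the surface vertices and their attachments in $\Lambda$ to produce a genuine non-trivial free splitting of $G$ --- this is a standard but fiddly Bass--Serre bookkeeping.
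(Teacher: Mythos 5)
Your proposal follows the paper's proof closely: in each case you locate an edge group $\langle z \rangle$ of $\Lambda$ adjacent to $\Lambda_0$ whose $\Aut(G_0)$-orbit hits infinitely many $G_0$-conjugacy classes (via Proposition \ref{InfiniteConjClasses}), contrast with Lemma \ref{OrbitEdgeElement}, and use the floor structure to transfer equality of types from $G_0$ up to $G$. The paper packages the type-transfer step as Proposition \ref{CriterionForNonHomogeneity} rather than invoking Proposition \ref{TypesOfFactorOfSubtowerArePreserved} directly, but the former's proof reduces to the latter, so this is the same argument.

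One factual slip in your Case 2: a generator of a maximal cyclic subgroup of $\F_2$ is \emph{not} necessarily primitive --- $[a,b]$ generates a maximal cyclic subgroup of $\F(a,b)$ yet is not part of any basis. This does not damage the conclusion, since any non-trivial $c \in \F_2$ already fails to be elliptic in one of the two HNN free splittings $\F_2 = \langle a \rangle*$ or $\F_2 = \langle b \rangle*$ (it cannot be conjugate to a power of $a$ and to a power of $b$ simultaneously), and in any case the proof of Proposition \ref{InfiniteConjClasses} in the $\F_2$ case goes directly through pseudo-Anosov automorphisms of the punctured torus and never uses the splitting hypothesis. But as written the primitivity claim is false and should be dropped in favour of one of these observations.
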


\begin{proof} Note that $G_0$ is itself torsion-free hyperbolic since it is a retract of $G$. By Proposition \ref{RelativeJSJSubgraphOfGroups}, $\Lambda_0$ is the maximal cyclic JSJ of $G_0$ relative to the collection of its adjacent edge groups $(H_1, \ldots, H_r)$ in $\Lambda$.

If $G_0$ is freely decomposable and not isomorphic to $\F_2$, then one of these edge group $H_i= \langle z_i \rangle$ is not elliptic in the Grushko decomposition of $G_0$ (otherwise $G$ itself would be freely decomposable).  Thus the orbit of $z_i$ under $\Aut(G_0)$ contains infinitely many non conjugate elements by Proposition \ref{InfiniteConjClasses}.

If $G_0 \simeq \F(a,b)$ and at least one of the edge groups $\langle z \rangle$ of $\Lambda - \Lambda_0$ adjacent to $\Lambda_0$ is not conjugate to an element of $\langle [a,b] \rangle$, then the orbit of $z$ under $\Aut(G_0)$ contains infinitely many non conjugate elements by Proposition \ref{InfiniteConjClasses}.

If $G_0$ is freely indecomposable, but $\Lambda_0$ is not the maximal cyclic JSJ of $G_0$, then by Remark \ref{JSJVsRelJSJ} there exists a maximal cyclic splitting of $G_0$ in which one of the groups $H_i = \langle z_i \rangle$ is not elliptic. Similarly, Proposition \ref{InfiniteConjClasses} implies that the orbit of $z_i$ under $\Aut(G_0)$ contains infinitely many pairwise non conjugate elements.

Since $G_0$ is a retract of $G$, non conjugate elements in the orbit of $z_i$ under $\Aut(G_0)$ are also non conjugate in $G$. But by Proposition \ref{OrbitEdgeElement}, the orbit of $z_i$ under $\Aut(G)$ consists of finitely many conjugacy classes. Hence not every automorphism of $G_0$ extends to an automorphism of $G$.

Now since $\{\Sigma_1, \ldots, \Sigma_m\}$ floor-retracts, there is a hyperbolic floor structure $(G, G', r)$ where $G_0$ is a free factor of $G'$. By Proposition \ref{CriterionForNonHomogeneity}, the group  $G$ is not homogeneous.
\end{proof}

\section{Structure of cores in the absence of the first two obstructions} 

We have now seen two possible obstructions to homogeneity of a group: the presence of retracting subsurfaces in its JSJ decomposition, and the possibility that the JSJ or Grushko decompositions of fundamental groups of the connected components obtained by removing a floor-retracting set of surfaces refines. To complete the characterization of homogeneity we need to describe cores (recall Definition \ref{RelativeCoresDef}) of $G$ when neither of these two obstructions occur. 

For that purpose we need to understand how surfaces in some splitting of $G$, and floor-retracting surfaces in particular, appear in the maximal cyclic JSJ decomposition of $G$.

First we prove a generalization of Lemma 4.1 in \cite{BestvinaFeighnOuterLimits}. We define
\begin{defi} Given spaces $U_1, \ldots, U_m$ and an integer $n$, we call star graph on  $U_1, \ldots, U_m$ and $n$ and denote by $ST(U_1, \ldots, U_m, n)$ the space built by taking a graph consisting of one central vertex $v$, vertices $v_1, \ldots, v_m$ adjacent to $v$, and $n$ loops at $v$, and identifying each $v_l$ to a point in the space $U_l$.
\end{defi}
 The fundamental group of $ST(U_1, \ldots, U_m, n)$ is of course $\pi_1(U_1)* \ldots *\pi_1(U_m)*\F_n$.

\begin{lemma} \label{GeneralBFLemma}Let $X_1, \ldots, X_k$ be connected spaces, of the form 
$$X_i = ST(U^i_1, \ldots, U^i_{m_i}, n_i)$$
and let $\Sigma$ be a connected surface with boundary. Let $f: \partial\Sigma \to  \bigsqcup X_i$ be an immersion that is essential on each boundary component, and denote by $X$ the adjunction space $\Sigma \cup_f \bigsqcup_i X_i$.

Denote by $Y$ the space $ST(U^1_1, \ldots, U^1_{m_1}, \ldots, U^k_1, \ldots, U^k_{m_k}, n)$.
Suppose there is a homotopy equivalence $F: X \to Y$ which restricts on each $X_i$ to an immersion $X_i \to Y$ which send each $U^i_j$ in $X_i$ homeomorphically to the copy of $U^i_j$ in $Y$.

Then up to homotopy equivalence, either the image of $\Sigma$ by $F$ is contained in one of the $U^i_j$, or the attaching map $f: \partial\Sigma \to  \bigsqcup X_i$ sends one of the boundary component of $\partial \Sigma$ to one of the $n_i$ loops attached to the central vertex of one of the spaces $X_i$, and all the other boundary components in the complement in $\bigsqcup X_i$ of this loop. 
\end{lemma}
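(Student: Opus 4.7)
The plan is to place $F|_\Sigma$ in general position with respect to the graph structure of $Y$ and then extract the dichotomy from the hypothesis that $F_\ast\colon\pi_1(X)\to\pi_1(Y)$ is an isomorphism, using Bass--Serre theory. This is a natural extension of Bestvina--Feighn's original Lemma 4.1 to the case where $\Sigma$ is attached simultaneously to several star graphs $X_1,\ldots,X_k$. First I would homotope $F$ relative to $\bigsqcup_i X_i$ so that $F|_\Sigma$ is transverse to the cocore of the core graph of $Y$, namely to the midpoints of the $n$ loops at the central vertex $v$ of $Y$ and to the midpoints of the $\sum m_i$ edges from $v$ to the identification points $v_l \in U^i_l$. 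The preimage in $\Sigma$ is a finite disjoint collection $\mathcal{C}$ of simple closed curves and properly embedded arcs; standard innermost-disk and innermost-arc arguments, combined with the essentiality of $f$ on each boundary component, let me ensure that every component of $\mathcal{C}$ is essential.

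Next I would classify each component $P$ of $\Sigma \setminus \mathcal{C}$ according to which region of $Y$ contains $F(P)$: either some $U^i_j$, a cylindrical neighborhood of one of the $n$ loops at $v$, or a contractible neighborhood of $v$ itself. Pieces of the third type can be absorbed into adjacent pieces after a further homotopy, and the immersion hypothesis on $F|_{X_i}$ --- which forces $F$ to respect the identification of each $U^i_j \subset X_i$ with its copy in $Y$ --- yields a symbolic description of each component of $\partial\Sigma$ as a word in the loops and $U$-pieces of $Y$. I would then exploit two tree actions: the free product decomposition $\pi_1(Y) = \pi_1(U^1_1) \ast \cdots \ast \pi_1(U^k_{m_k}) \ast F_n$ has a Bass--Serre tree $T$, and restricting the induced $\pi_1(X)$-action to $\pi_1(\Sigma)$, either $\pi_1(\Sigma)$ fixes a vertex of $T$ --- in which case $F_\ast(\pi_1(\Sigma))$ lies in a conjugate of some $\pi_1(U^i_j)$ and a further homotopy puts $F(\Sigma)$ into that $U^i_j$, yielding case (a) --- or it does not.

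In the non-fixed case, an Euler-characteristic computation
\[
\chi(\Sigma) \;=\; 1 - n - k + \sum_i n_i
\]
together with the rank constraints imposed by $F|_{X_i}$ shows that $\Sigma$ must contribute exactly $n - \sum_i n_i + (k-1)$ new free generators to $\pi_1(Y)$. The main obstacle is the final rigidity step: showing that exactly one boundary component of $\partial\Sigma$ is mapped to one of the $n_i$ loops of a single $X_i$, with all others landing in the complement of that loop. I expect this to follow from a cut-and-paste argument along an innermost arc of $\mathcal{C}$, since if two distinct boundary components both traversed graph parts essentially, one could construct an essential arc whose $F$-image produces a relation incompatible with the free-product normal form in $\pi_1(Y)$, contradicting injectivity of $F_\ast$. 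Combined with the classification of surfaces of the prescribed Euler characteristic, this forces the stated dichotomy.
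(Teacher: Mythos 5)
Your proposal begins the same way as the paper's argument (make $F$ transverse to the midpoints of the edges of $Y$ and study the resulting preimage 1-complex in $\Sigma$), but then it diverges significantly, and the divergent part is where it breaks down.

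The paper's proof reduces the whole lemma to a single clean claim: that the preimage of the midpoints, viewed as an embedded graph in $X$ (circles and arcs in $\Sigma$, plus isolated vertices on the graph edges of the $X_i$), either consists only of isolated vertices (in which case $F(\Sigma)$ misses all midpoints and can be pushed into a single $U^i_j$, giving the first alternative), or has a valence-one vertex (giving the second alternative). If neither holds, the graph contains a cycle; since each connected component of the preimage maps to a single midpoint, this cycle has null-homotopic image in $Y$, and the paper then shows (by minimality for circle components, and by a universal-cover/disk argument for cycles made of several arcs) that it is nonetheless essential in $X$, contradicting $\pi_1$-injectivity of $F$. The entire weight of the argument sits on this graph-theoretic dichotomy, which makes the proof short.

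Your proposal replaces this with a Bass--Serre tree analysis plus an Euler characteristic count, and then attempts a cut-and-paste argument. The tree dichotomy you give for detecting the first alternative (does $\pi_1(\Sigma)$ fix a vertex of the Bass--Serre tree of $\pi_1(Y)$?) is fine and morally equivalent to the paper's ``only isolated vertices'' case. The Euler characteristic identity $\chi(\Sigma) = 1 - n - k + \sum_i n_i$ also checks out. But the crucial rigidity step is explicitly deferred: you write ``I expect this to follow from a cut-and-paste argument along an innermost arc,'' and the sketch that follows is not sufficient. The statement that ``if two distinct boundary components both traversed graph parts essentially, one could construct an essential arc whose $F$-image produces a relation incompatible with the free-product normal form'' is not justified, and it is not clear that it can be: two boundary components both crossing an edge of some $X_i$ is not by itself a contradiction, and the image under $F$ of a properly embedded arc joining them need not give any forbidden relation in $\pi_1(Y)$ (the arc is essential rel $\partial\Sigma$ in $\Sigma$ but can perfectly well map to a nontrivial loop in $Y$). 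The Euler characteristic constraint is stated but never actually exploited in the sketched contradiction. What you are missing is exactly the paper's observation that a cycle in the preimage graph necessarily maps to a single point of $Y$ (so is null-homotopic there), together with the universal-cover argument showing such a cycle is essential in $X$; this is the engine that rules out the ``no valence-one vertex'' case, and without something replacing it your argument does not close.
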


\begin{proof} We proceed just as in the proof of Lemma 4.1 in \cite{BestvinaFeighnOuterLimits}. It is enough to show that for some $i$, some edge of $X_i$ not lying in any $U^i_j$ is crossed exactly once by the image of the map $f: \partial\Sigma \to \bigsqcup X_i$. For this, consider the inverse image by $F$ of the midpoints of edges of $Y$. Assuming $F$ is transverse and minimal, this gives an embedded graph in $X$ which consists of loops in the interior of (the image of) $\Sigma$, arcs with endpoint in the  (the image of the) boundary of $\Sigma$, and isolated vertices (on edges of the spaces $X_i$). If the graph has a valence one vertex, this corresponds to an edge of $Y$ which is crossed exactly once by the image of the map $F \circ f$ and we are done. 

If the image by $F$ of $\Sigma$ in $Y$ is not contained in one of the subspaces $U^i_j$ up to homotopy equivalence, the graph must meet the image of $\Sigma$ in $X$ - in particular it does not consist only of isolated points. 
Assume there are no valence one vertices in this graph - then it must contain a loop. We now prove that this loop is non null homotopic: this will contradict the $\pi_1$-injectivity of $F$ and we will be done. If the loop lies in the interior of $\Sigma$, it must be non null homotopic, otherwise $F$ is not minimal. If the loop consists of several arcs, we look at a lift of it in the universal cover of $X$ to see that the only way it can lift to a loop is if it doesn't cross to a different copy of $\tilde{\Sigma}$. This implies it is made of a single arc which together with a piece of the boundary of $\Sigma$ bounds a disk in the image of $\Sigma$ in $X$. Hence the image in $Y$ of this piece of boundary is a loop which bounds a disk in $Y$. Since the map $X \to Y$ is a $\pi_1$-embedding, it also bounds a disk in $X$. This contradicts our choice of the attaching map $f$. 
\end{proof}

From this we deduce the following lemma :
\begin{lemma} \label{SurfaceInFreeComponent} Let $G$ be a torsion-free hyperbolic group. Let $G=G_1*\cdots *G_m * \F$ be the Grushko decomposition of $G$ relative to a tuple $h$, where the factors $G_i$ are freely indecomposable and non cyclic (relative to $h$ for $i=1$), and $\F$ is a possibly trivial free group. Let $(G, G',r)$ be a hyperbolic floor structure for $G$ relative to $h$, and assume the corresponding floor decomposition $\Gamma$ admits a unique surface $\Sigma$. 

Then there exists $i$ in $\{1, \ldots, m \}$ such that some conjugate of $G_i$ contains the surface group associated to $\Sigma$, and for which there exists a floor structure $(G_i, G'_i, r_i)$ whose associated decomposition $\Gamma_i$ has exactly one surface corresponding to $\Sigma$.
\end{lemma}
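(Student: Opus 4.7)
The proof will proceed in two stages: first, I locate the surface group $S = \pi_1(\Sigma)$ inside a Grushko factor $G_i$; then, I construct a floor structure on $G_i$ whose associated graph of groups $\Gamma_i$ has $\Sigma$ as its unique surface.

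\emph{Stage 1.} Because $\Sigma$ is a once-punctured torus or has Euler characteristic at most $-2$, the group $S$ is freely indecomposable relative to its boundary subgroups. These boundary subgroups are cyclic, hence elliptic in any free splitting of $G$, in particular in the Bass-Serre tree $T_{Gr}$ of the Grushko decomposition $G = G_1 * \cdots * G_m * \F$ relative to $h$. The relative free indecomposability of $S$ then forces $S$ itself to be elliptic in $T_{Gr}$, so it fixes a vertex. Vertex stabilizers being conjugates of the $G_i$'s, some conjugate of $G_i$ contains $S$; after conjugating, I assume $S \leq G_i$.

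\emph{Stage 2.} Let $T_\Gamma$ be the Bass-Serre tree of $\Gamma$ and $T_i$ its minimal $G_i$-invariant subtree, containing the vertex $v_\Sigma$ fixed by $S$. The quotient $\Gamma_i := T_i/G_i$ is a graph of groups for $G_i$, inheriting the bipartite structure of $\Gamma$. By the malnormality of the free factor $G_i$ in the free product $G$, any surface vertex of $T_\Gamma$ outside the $G_i$-orbit of $v_\Sigma$ has trivial $G_i$-stabilizer, so $\Gamma_i$ has exactly one surface-type vertex, corresponding to $\Sigma$, with vertex group $S$. The non-surface vertex groups of $\Gamma_i$ are intersections of the form $G_i \cap g H_j g^{-1}$, and by Bass-Serre theory they freely generate a subgroup $G'_i = H^i_1 * \cdots * H^i_q$ of $G_i$. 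For the retraction $r_i : G_i \to G'_i$, I propose to compose the restriction $r|_{G_i}$ with the natural Grushko projection $p_i : G \to G_i$ (which kills the other Grushko factors and the free part), setting $r_i := p_i \circ r|_{G_i}$; both $r$ and $p_i$ fix $G'_i$ pointwise, so $r_i$ does too. In the extended case where $G'_i$ is cyclic, I work in $G_i * \Z$ as prescribed by the extended floor definition.

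\emph{Main obstacle.} The delicate step is verifying that the image of $r_i$ actually lies in $G'_i$ and that $r_i(S)$ is non-abelian, since the Grushko projection of $G'$ need not land inside $G'$. The boundary product $\delta = \partial_1 \cdots \partial_k$ of $\Sigma$ lies in $S \leq G_i$ and is preserved by $r_i$, so if $r_i(S)$ were abelian, the surface relation would express $\delta$ as a product of pairwise-commuting squares in $G'_i$; I aim to rule this out using the non-abelianity of $r(S)$ in $G'$ together with the structure of the Grushko projection. If the direct composition does not suffice, I will modify $r$ on the interior generators of $S$ so that their images lie directly in $G_i$ from the start: since $S\leq G_i$ already realizes $\delta$ as a product of commutators and squares in $G_i$, and the original $r$ provides a non-abelian realization in $G'$, an adjustment combining the two yields a non-abelian realization inside $G'_i$, completing the construction.
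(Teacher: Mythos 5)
Your Stage 1 contains a genuine error that breaks the argument: you assert that the boundary subgroups of $\Sigma$ "are cyclic, hence elliptic in any free splitting of $G$," but cyclic subgroups are not in general elliptic in free splittings (for instance, $\langle ab \rangle$ is hyperbolic in the splitting $\F_2 = \langle a\rangle * \langle b\rangle$). There is no a priori reason for the boundary subgroups of $\Sigma$ — which sit inside the non-surface vertex groups $H_j$ of the floor decomposition $\Gamma$, not inside the Grushko factors $G_i$ — to be elliptic in the Grushko tree. Consequently, the inference that $S$ is elliptic in $T_{\mathrm{Gr}}$ and so lies in a conjugate of some $G_i$ does not follow. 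Indeed, the fact that $S$ lands inside some $G_i$ is \emph{not} a formal consequence of the floor decomposition alone: the paper's proof considers instead the action of each $G_i$ on $T_\Gamma$, shows that if some $G_i$ meets a surface vertex group beyond a boundary subgroup then that $G_i$ contains the whole surface group, and then crucially rules out the remaining case (no $G_i$ contains $S$) by invoking a Bestvina--Feighn--style result (Lemma \ref{GeneralBFLemma}) together with the retraction $r$ and a $\Z/2\Z$ quotient argument. The retraction is essential here; without it your desired conclusion can fail.

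Stage 2 also has unresolved problems. Setting $r_i := p_i \circ r|_{G_i}$ is not a valid construction as stated: for $g \in G_i$ you have $r(g) \in G' = H_1 * \cdots * H_k$, and there is no reason the Grushko projection $p_i(r(g))$ should lie in $G'_i = G_i \cap G'$ — the projection of $G'$ need not stay inside $G'$. You acknowledge this obstacle but the proposed fix ("an adjustment combining the two") is not a proof. The paper instead chooses a maximal collection $\mathcal{C}$ of simple closed curves on $\Sigma$ killed by $r$, factors $r|_S$ through the pinching map $\rho_{\mathcal{C}}$, observes that each boundary-carrying piece is sent by $r'$ into a conjugate of $K_1 = G_1 \cap G'$, and then carefully reconjugates each piece (and freely chooses images of closed pieces) so that the resulting map lands in $K_1$ (or $K_1 * \Z$) with non-abelian image of $S$. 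You would need to carry out an argument of this precision to close the gap.
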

The proof is similar to that of Lemma 5.19 of \cite{PerinElementary}, which shows that free groups do not admit hyperbolic floor structures.

\begin{proof}
Consider the action of each $G_i$ on the tree $T_{\Gamma}$ corresponding to $\Gamma$. 

If $G_i$ does not intersect a surface type vertex group in more than a boundary subgroup, the fact that $G_i$ is (relatively) freely indecomposable together with $1$-acylindricity near surface type vertex (see Remark \ref{1Acylindricity}) and the fact that $h$ is elliptic if $i=1$ implies that $G_i$ must in fact be elliptic in $\Gamma$.

Suppose on the other hand there is a surface type vertex $u$ of $T_{\Gamma}$ with stabilizer $S$, such that $S_i=G_i\cap S$ is not contained in a boundary subgroup. If $S_i$ is of infinite index in $S$, then by Lemma 3.11 in \cite{PerinElementary} it is the free product of edge groups with a (non-trivial) free group, and so $G_i$ inherits a non-trivial free splitting from its action on $T_{\Gamma}$. But $G_i$ is freely indecomposable, so $S_i$ must have finite index in $S$. Now if $S_i\neq S$ and $g\in S\setminus S_i$ then $gS_ig^{-1}$ is also of finite index in $S$ and so is $S_i \cap gS_ig^{-1}$, but this is a contradiction: if $s\in S_i\cap gS_ig^{-1}$ it stabilizes two distinct vertices in the tree corresponding to the free splitting $G_1*\cdots *G_m*\F$, and so it stabilizes the path between them. But edge stabilizers of this tree are trivial. Hence we must have $S_i = S$, that is, $S \leq G_i$.

Note moreover that the second possibility occurs for at most one of the factors $G_i$. 

Let us assume that it does not occur for any of the the factors $G_i$ and get a contradiction. By definition of a floor structure, $G' = H_1 * \ldots *H_l$ where the groups $H_j$ are non surface type vertex groups of $\Gamma$. Now since the groups $G_i$ are elliptic in $\Gamma$, the free decomposition inherited by $H_j$ from the Grushko decomposition of $G$ is a free product of conjugates of the groups $G_i$ together with a free group. This must in fact be the Grushko decomposition of $H_j$ since the groups $G_i$ are freely indecomposable. Moreover, if $G^{\alpha}_i$ and $G^{\beta}_i$ are both subgroups of $H_j$, the retraction $r$ restricts to the identity on both these subgroups. Thus for any $g \in G_i$, we have $\alpha g \alpha^{-1}= r(\alpha g \alpha^{-1}) = r(\alpha) r(g) r(\alpha^{-1})$ so $r(g)= r(\alpha)^{-1} \alpha g \alpha^{-1} r(\alpha)$, and similarly  $r(g)= r(\beta)^{-1} \beta g \beta^{-1} r(\beta)$. Hence $\beta^{-1} r(\beta) r(\alpha)^{-1} \alpha$ commutes with all the elements of $G_1$, thus must be trivial. We get that $ r(\beta \alpha^{-1}) = \beta \alpha^{-1}$ so $\beta \alpha^{-1} \in G'$ (it lies in the image of $r$). Since $G' = H_1* \ldots * H_l$, the factor $G_i$ occurs exactly once as a factor in a conjugate of a Grushko decomposition of one of the $H_j$.

For each $G_i$, let $U_i$ be a space with fundamental group $G_i$. The group $H_j$ can be seen as the fundamental group of a space of the form $X_j = ST(U_{i^j_1}, \ldots, U_{i^{j}_{m_j}}, n_j)$, and $G$ is the fundamental group of a space of the form $Y = ST(U_1, \ldots, U_m, n)$. Note that the embeddings $H_j \to G$ can be represented by immersions $X_j \to Y$ which send each $U_i$ in $X_j$ homeomorphically to the copy of $U_i$ in $Y$. On the other hand, the graph of groups $\Gamma$ gives a way of representing $G$ as the fundamental group of a connected space $X$ obtained by attaching the surface $\Sigma$ to $\sqcup_j X_j$ along its boundary. 

We can thus apply Lemma \ref{GeneralBFLemma}: we get that it is possible to find a decomposition $G'=G'_0*\langle u \rangle$ such that if $b_1, \ldots, b_p$ denote the images in $G'$ of the generators of boundary subgroups of $\Sigma$, we have $b_1=u$ and $b_2, \ldots, b_p$ are contained in conjugates of $G'_0$.

Consider the quotient $q:G' \to \Z/2\Z$ which kills $G'_0$ as well as the square of $u$. Note that the existence of the retraction $r: G \to G'$ guarantees that some product $\alpha = b'_1 \ldots b'_p$ of conjugates of the elements $b_1, \ldots, b_p$ can be written as a product of commutators and squares in $G'$. Hence $q(\alpha)$ should be trivial. On the other hand, we have that $q(u) = q(b_1) \neq 1$ - this is a contradiction.

Thus for some index $i$, say $i=1$, the surface group $S$ is contained in $G_1$. There remains to see that $G_1$ admits a floor structure as claimed. Consider the decomposition $G'= K_1 * \ldots *K_s *F$ induced by the decomposition $G = G_1 * \ldots * G_p$, where $K_1 = G_1 \cap G'$. The restriction of $r$ to $S$ induces an action of the surface group $S$ on the tree corresponding to the free splitting $G'= K_1 * \ldots *K_s *F$ in which the boundary subgroups lie in conjugates of $K_1$. We choose a maximal collection ${\cal C}$ of simple closed curves on $\Sigma$ whose corresponding elements are killed by $r$, and build the map $\rho_{\cal C}$ whose kernel is normally generated by the elements corresponding to ${\cal C}$. Recall that $r$ factors as $r' \circ \rho_{\cal C}$ and that $\rho_{\cal C}(S)$ admits a graph of groups decomposition $\Gamma(\Sigma,{\cal C})$ with trivial edge stabilizers, and vertex stabilizers which are the fundamental groups of surfaces $\Sigma_1, \ldots , \Sigma_q$. Moreover, by maximality of ${\cal C}$, the restriction of $r'$ to each $S_k = \pi_1(\Sigma_k)$ does not kill any element corresponding to a simple closed curve on $\Sigma_k$. In particular, if $\Sigma_k$ contains boundary elements, $r'(S_k)$ lies in a conjugate $\gamma_k K_1 \gamma^{-1}_k$ of $K_1$. We now define a retraction $r_1: G_1 \to K_1$ (respectively $r_1: G_1*\Z \to K_1*\Z$ if $K_1$ is cyclic) by setting $r_1 \mid_S = r'_1 \circ \rho_{\cal C}$ where $r'_1\mid_{S_k}= \Conj(\delta_k \gamma^{-1}_k) \circ r'\mid_{S_k}$ with $\delta_k \in K_1$ (respectively in $K_1 *\Z$) for those $S_k$ which contain boundary elements, and choosing $r'_1\mid_{S_k}: S_k \to K_1$ for those $S_k$ which don't. It is not difficult to see that we can choose these last retractions as well as the elements $\delta_k$ in such a way that $r_1(S)$ is not abelian. This gives the desired floor structure for $G_1$.
\end{proof}
So we see that if some torsion-free hyperbolic group $G$ admits a structure of hyperbolic floor, it essentially comes from a structure of floor of one of the factors $G_i$ of the Grushko decomposition of $G$.

The next lemma will help us to show that in fact $S$ corresponds to a subsurface of some surface group in the JSJ decomposition of this factor $G_i$. 	
\begin{lemma} \label{RetractingSurfComeFromJSJ}
Let $G$ be a freely indecomposable, torsion free hyperbolic group, and $\Gamma$ some splitting of $G$ as a graph of groups with surfaces. Denote by $\Lambda$ the maximal cyclic $JSJ$ decomposition of $G$.

If $\Sigma$ is a surface of $\Gamma$, it is a subsurface of a surface in $\Lambda$. If moreover $\Sigma$ floor-retracts in $\Gamma$, then this subsurface floor-retracts in $\Lambda$.
\end{lemma}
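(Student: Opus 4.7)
\smallskip

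The strategy is to compare $\Gamma$ and $\Lambda$ by exploiting the universal compatibility of the tree of cylinders $T_c$ of the cyclic JSJ deformation space of $G$. Let $v$ be the vertex of $\Gamma$ carrying $\Sigma$, with $S = \pi_1(\Sigma)$, and collapse all edges of $\Gamma$ not incident to $v$ to obtain a cyclic splitting $\Lambda_\Sigma$ of $G$ (the edges around a surface-type vertex are cyclic by Definition \ref{SurfaceTypeVertex}). By Theorem 11.4 of \cite{GuirardelLevittUltimateJSJ} (quoted in the paragraph on the tree of cylinders), there is a common refinement $\hat T$ of $T_c$ and $T_{\Lambda_\Sigma}$, with collapse maps $\hat T \to T_c$ and $\hat T \to T_{\Lambda_\Sigma}$. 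Fix a lift $\tilde v$ of the $\Sigma$-vertex in $T_{\Lambda_\Sigma}$ and let $\hat T_v \subset \hat T$ be its preimage. Then $S$ acts on $\hat T_v$ with cyclic edge stabilizers, and the boundary subgroups of $S$ are elliptic on $\hat T_v$: each is an edge stabilizer in $T_{\Lambda_\Sigma}$ at $\tilde v$ and so fixes the endpoint in $\hat T_v$ of its lifted edge in $\hat T$.

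\smallskip

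Next I apply Theorem III.2.6 of \cite{MorganShalen} (used in the proof of Lemma \ref{ComposingWithRetractionisRelated}) to this $S$-action: since edge stabilizers are cyclic and the peripheral subgroups are elliptic, the quotient is dual to a collection $\mathcal C$ of disjoint essential simple closed curves on $\Sigma$, and the vertex stabilizers are the subgroups $S_i = \pi_1(\Sigma_i)$ for the connected components $\Sigma_i$ of $\Sigma \setminus \mathcal C$. Pushing via $\hat T \to T_c$, each $S_i$ lands in a vertex group of $T_c$. Two adjacent pieces $\Sigma_i, \Sigma_j$ map either to the same vertex of $T_c$ (if the separating edge of $\hat T_v$ is collapsed) or to adjacent vertices; by the bipartite structure of $T_c$ (between cylinders and non-cylinder vertices) and non-cyclicity of the $S_i$ for non-trivial pieces, the second case is excluded. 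By connectedness, all non-trivial pieces land in a single vertex $w$ of $T_c$. The vertex $w$ cannot be rigid: if it were, its stabilizer $R$ would be elliptic in every cyclic $G$-tree, but essential non-peripheral simple closed curves on the $\Sigma_i$ yield cyclic $G$-splittings where $R$ is not elliptic, a contradiction. Hence $w$ is a surface-type vertex of $T_c$ with surface $\Sigma^{T_c}$. By Lemma 3.10 of \cite{PerinElementary}, each $S_i$ corresponds to an actual subsurface of $\Sigma^{T_c}$, and regluing along $\mathcal C$ exhibits $\Sigma$ as a subsurface of $\Sigma^{T_c}$. The passage from $T_c$ to the maximal-cyclic JSJ $\Lambda$ refines once-punctured Klein bottles and amalgamates maximal cyclic supergroups at boundaries; in both operations, surfaces of $T_c$ remain surfaces (possibly cut into smaller pieces) of $\Lambda$, so $\Sigma$ stays a subsurface of one such surface.

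\smallskip

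For the floor-retracting part I use the combinatorial characterization of Proposition \ref{RetractingSurfaceEquations}. The fact that $\Sigma$ floor-retracts in $\Gamma$ means precisely that appropriate products of conjugates of boundary generators of $\Sigma$ can be written as products of non-trivial commutators and squares inside the non-surface vertex groups of $\Lambda_\Sigma$. Let $\hat\Lambda$ denote the refinement of $\Lambda$ at $\Sigma^\Lambda$ along the curves $\partial \Sigma$, and consider $(\hat\Lambda)_\Sigma$. The key observation is that each non-surface vertex group of $\Lambda_\Sigma$ embeds (up to conjugation) into a single non-surface vertex group of $(\hat\Lambda)_\Sigma$: the action of the fundamental group of each connected component of $\Gamma - v$ on $\hat T$ is elliptic (by universal ellipticity of $T_c$ for its own edge groups combined with an argument analogous to Step~1 applied to these components), and its fixed vertex projects into a single vertex of $(\hat\Lambda)_\Sigma$. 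Consequently, the commutator/square factorizations witnessing the floor-retract in $\Lambda_\Sigma$ transfer verbatim to factorizations in the vertex groups of $(\hat\Lambda)_\Sigma$, and applying Proposition \ref{RetractingSurfaceEquations} in the reverse direction yields the desired retraction exhibiting $\Sigma$ as floor-retracting in $\hat\Lambda$, i.e.\ as a floor-retracting subsurface of $\Sigma^\Lambda$ in $\Lambda$.

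\smallskip

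The hardest part is Step 3, ruling out that some non-cyclic $S_i$ lands in a rigid vertex of $T_c$ when $\Sigma_i$ admits no essential non-peripheral simple closed curve (e.g.\ a pair of pants): here one must combine universal ellipticity of the rigid vertex with a careful analysis of the stabilizers in $\hat T_v$ to show that this configuration forces $\Sigma$ itself to be too small to be a non-trivial surface of a graph of groups. The second technical point is verifying in Step 3 of the floor-retract argument that non-surface vertex groups of $\Lambda_\Sigma$ really do embed into single non-surface vertex groups of $(\hat\Lambda)_\Sigma$, since the two graphs are constructed from quite different splittings; this is where the universal compatibility of $T_c$ plays its role again.
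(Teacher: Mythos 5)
The overall strategy is the same as the paper's: use the universal compatibility of the tree of cylinders $T_c$ (Theorem 11.4 of \cite{GuirardelLevittUltimateJSJ}) to produce a common refinement of $T_\Gamma$ and $T_c$, then analyze the preimage of the surface vertex via a dual-curve / Morgan--Shalen argument. However, two steps diverge from the paper's proof and one has a genuine gap.

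For the subsurface claim, the paper exploits the stronger form of universal compatibility: the common refinement $T$ can be chosen so that edges collapsed toward $T_\Gamma$ are \emph{not} collapsed toward $T_c$. Then the preimage $Y$ of $v_\Sigma$ has edges that survive in $T_c$; since every vertex group of $Y$ is a non-cyclic subsurface group that is non-elliptic in some cyclic splitting of $G$ (obtained from an essential s.c.c.\ on $\Sigma$ crossing that piece), every vertex of $Y$ maps to a surface-type vertex of $T_c$, and no two such can be adjacent, so $Y$ is a single point. This avoids the difficulty you flag: the s.c.c.\ used to defeat rigidity is on $\Sigma$, not on the piece $\Sigma_i$, so the pair-of-pants case causes no trouble. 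Your version instead tries to find essential \emph{non-peripheral} s.c.c.s on each $\Sigma_i$; as you correctly observe, that fails for pairs of pants, and your Step~3 as written does not close. The fix is exactly the paper's reformulation.

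For the floor-retract claim, there is a gap. You argue that each non-surface vertex group of $\Gamma_\Sigma$ embeds into a vertex group of $(\hat\Lambda)_\Sigma$ and then try to ``transfer verbatim'' the commutator/square factorizations via Proposition \ref{RetractingSurfaceEquations}. But an embedding $H'_j \hookrightarrow H_{i}$ is not enough: a retraction $r\colon G \to H'_1 * \cdots * H'_p$ composed with these embeddings is not a retraction onto $H_1 * \cdots * H_q$ unless the embeddings are equalities, and nothing in your argument forces that. Furthermore if several $H'_j$ land in the same $H_i$ you would need to merge factorizations, which Proposition \ref{RetractingSurfaceEquations} as stated does not license. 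The paper's proof avoids all of this by observing that once $\Sigma$ is identified as a subsurface of a surface of $\Lambda'$, the two collapsed splittings $\Gamma_\Sigma$ and $\Lambda'_\Sigma$ are in fact the \emph{same} graph of groups (both have the single surface vertex with group $S=\pi_1(\Sigma)$, the same boundary edge groups, and complementary vertex groups determined by the tree), so the existing retraction witnesses the floor-retract in $\Lambda'$ directly. You should replace the embedding-plus-equations argument with that identification.

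The reduction from $T_c$ to the maximal cyclic JSJ $\Lambda$ at the end is handled in the paper by a one-line appeal to the construction in Section~\ref{MaxCyclicJSJ}; your hasty paragraph there is essentially correct in content, though ``surfaces of $T_c$ remain surfaces (possibly cut into smaller pieces) of $\Lambda$'' conflates the Klein-bottle refinement (which genuinely cuts) with the socket-forming step (which does not), and should be spelled out more carefully.
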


\begin{proof}
Denote by $\Lambda'$ the tree of cylinders of $G$. Since $\Lambda$ is obtained from $\Lambda'$ by the procedure described in Section 2, it is enough to show that $\Sigma$ is a subsurface of $\Lambda'$ which floor-retracts in $\Lambda'$. 

Let $T_{\Gamma},T_{\Lambda'}$ be the $G$-trees corresponding to $\Gamma, \Lambda'$. By Theorem 11.4 in \cite{GuirardelLevittUltimateJSJ} there is a tree $T$ which refines both, such that if $e\in T$ is collapsed in $T_{\Gamma}$ it does not collapse in $T_{\Lambda'}$. So vertex groups in $\Gamma$ are fundamental groups of subgraphs of groups, whose vertices are rigid groups of $\Lambda'$ or subsurfaces of $\Lambda'$. Let $Y$ be the subtree of $T$ which is collapsed to a surface type vertex $v_{\Sigma}$ of $T_{\Gamma}$ : then $Y$ is dual to a set of simple closed curves on $\Sigma$, so any one of its vertex groups corresponds to a subsurface of $\Sigma$, which implies that there is a splitting of $G$ in which it is not elliptic - in particular none of its vertex groups come from rigid vertices of $\Lambda'$. None of the edges of $Y$ collapse in $T_{\Lambda'}$, and since in $T_{\Lambda'}$ there are no adjacent surface-type vertices, $Y$ is a point and corresponds to a subsurface of $T_{\Lambda'}$. Since $\Sigma$ floor-retracts in $\Gamma$ there is a hyperbolic floor structure corresponding to $\Gamma_{\Sigma}$. But the graphs $\Gamma_{\Sigma}$ and $\Lambda'_{\Sigma}$ are identical, so $\Sigma$ floor-retracts in $\Lambda'$. 
\end{proof}

\begin{prop} \label{SubtowersUnderFirstTwoCond} Let $G$ be a freely indecomposable torsion free hyperbolic group, and denote by $\Lambda$ its maximal cyclic JSJ decomposition. Suppose that 
\begin{itemize}
\item none of the surfaces $\Sigma$ of $\Lambda$ admits a proper subsurface $\Sigma'$ which floor-retracts in $\Lambda$;
\item for any set of surfaces which floor retracts in $\Lambda$ with corresponding set of vertices $W$, and for any connected component $\Lambda_0$ of $\Lambda - W$, we have either that $(i)$ $G_0= \pi_1(\Lambda_0)$ is freely indecomposable, and $\Lambda_0$ is the maximal cyclic JSJ decomposition of $G_0$, or $(ii)$ $G_0 \simeq \F(a,b)$ and the edge groups of $\Lambda-\Lambda_0$ adjacent to $\Lambda_0$ are conjugate into $\langle [a,b] \rangle$.
\end{itemize}

If $H$ is a core of $G$ with respect to some tuple, then $G$ admits a floor structure $(G, H * \F, r)$, and there is a set $V_0$ of surface type vertices of $\Lambda$ such that the floor decomposition associated with $(G, H * \F, r)$ is exactly $\Lambda_{V_0}$.  
\end{prop}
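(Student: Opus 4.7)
My plan is to induct on the length $m$ of the tower retract $G = G_0 > G_1 > \cdots > G_m = H * \F$ witnessing that $H$ is a core of $G$ with respect to the given tuple $\bar{u}$. When $m = 0$ we have $G = H * \F$, and since $G$ is freely indecomposable this forces $\F$ to be trivial and $G = H$; then $V_0 = \emptyset$ with $r = \id_G$ works, and $\Lambda_\emptyset$ is the trivial decomposition with single vertex group $G$.

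For the inductive step, my first task is to match the top floor $(G_0, G_1, r_0)$, with associated bipartite graph-of-groups-with-surfaces $\Gamma_0$, against $\Lambda$. Let $\Sigma^0_1, \ldots, \Sigma^0_{p_0}$ denote the surfaces of $\Gamma_0$. By Lemma \ref{RetractingSurfComeFromJSJ} each $\Sigma^0_j$ is a subsurface of some surface of $\Lambda$ that floor-retracts in $\Lambda$; hypothesis (1) then forces each $\Sigma^0_j$ to be a \emph{full} surface of $\Lambda$. Call $V^{(0)}$ the resulting set of surface vertices of $\Lambda$. I next argue $\Gamma_0 \cong \Lambda_{V^{(0)}}$: since $\Lambda$ is universally elliptic in cyclic splittings of $G$, it refines the cyclic splitting $\Gamma_0$, and the edges of $\Lambda$ collapsed by the map $T_\Lambda \to T_{\Gamma_0}$ cannot lie in any surface of $V^{(0)}$ (they are full surfaces in both). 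Hypothesis (2) then pins down each non-surface vertex group $H_w$ of $\Gamma_0$: by the maximal-cyclic analogue of Proposition \ref{RelativeJSJSubgraphOfGroups}, the action of $H_w$ on its minimal subtree in $T_\Lambda$ is its maximal cyclic JSJ relative to incident edge groups, and hypothesis (2) says this JSJ is exactly a single connected component of $\Lambda - V^{(0)}$ (the $\F_2$ exception being absorbed into a free factor since its edge groups are conjugate into $\langle[a,b]\rangle$, hence cannot produce extra splittings). In particular $V^{(0)}$ itself floor-retracts in $\Lambda$, and $G_1$ is the free product of the $\pi_1$ of the connected components of $\Lambda - V^{(0)}$.

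Now I apply induction to the shortened tower $G_1 > G_2 > \cdots > G_m = H * \F$. By hypothesis (2) each freely indecomposable factor $K_i$ of the Grushko decomposition of $G_1$ has maximal cyclic JSJ equal to a connected component $\Lambda^i$ of $\Lambda - V^{(0)}$, and $\Lambda^i$ inherits both hypotheses (1) and (2) as a subgraph of groups of $\Lambda$ (floor-retracting subsets and subsurface-retractions inside $\Lambda^i$ lift to ones inside $\Lambda$). Using Lemma \ref{SurfaceInFreeComponent} iteratively, each floor $(G_j, G_{j+1}, r_j)$ for $j \geq 1$ can be split across the Grushko factors so each of its surfaces lives in a unique factor $K_i$; the induction hypothesis applied to each $K_i$ produces a set $V^{(i)}$ of surface vertices of $\Lambda^i$ and a single-floor structure for $K_i$ with decomposition $\Lambda^i_{V^{(i)}}$.

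Finally I take $V_0 = V^{(0)} \sqcup \bigsqcup_i V^{(i)}$. Applying Lemma \ref{SurfacesCanRetractTogether} iteratively (adding the surfaces in each $V^{(i)}$ to $V^{(0)}$, noting they lie in the connected component $\Lambda^i$ where they already floor-retract), $V_0$ floor-retracts in $\Lambda$, giving a floor structure $(G, \pi_1(\Lambda - V_0), r)$ whose decomposition is $\Lambda_{V_0}$; compatibility with the composed retractions of the tower identifies $\pi_1(\Lambda - V_0) = H * \F$. The main obstacle will be the step identifying $\Gamma_0$ with $\Lambda_{V^{(0)}}$ on the nose rather than merely up to compatibility: this requires carefully exploiting the universal elliptic property of the maximal cyclic JSJ together with hypothesis (2) to prove that each non-surface vertex of $\Gamma_0$ collects exactly one connected component of $\Lambda - V^{(0)}$, with no further splittings possible; a secondary subtlety is verifying that the $\F_2$ case in hypothesis (2) does not obstruct the iterative application of Lemma \ref{SurfacesCanRetractTogether}, since such factors behave as punctured-torus surface pieces rather than as rigid vertices.
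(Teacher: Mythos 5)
Your proof uses the same key ingredients as the paper's argument (Lemma \ref{RetractingSurfComeFromJSJ} plus condition 1 to promote floor surfaces to full surfaces of $\Lambda$; Lemma \ref{SurfaceInFreeComponent} to localize each lower floor's surface in a Grushko factor; condition 2 to identify the factors' JSJs with components of $\Lambda - V$; Lemma \ref{SurfacesCanRetractTogether} to glue), but you organize the induction differently. The paper keeps the ambient group fixed at $G$ throughout: after matching the top floor's surfaces with $V^{(0)}$ and locating the second floor's surface $\Sigma'$ inside a factor $G_i$, it argues that $\Sigma'$ already floor-retracts \emph{in $\Lambda$} (not just in $\Lambda_i$), invokes condition 1 for $\Lambda$ to see $\Sigma'$ is full, and then collapses the first two floors into a single floor via Lemma \ref{SurfacesCanRetractTogether}, shortening the tower and repeating. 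You instead induct on tower length by restarting the whole proposition inside each Grushko factor $K_i$ of $G_1$; this requires establishing, as an extra step, that both hypotheses are inherited by $\Lambda^i = \Lambda_i$. That inheritance does hold, but it is not quite as immediate as you state: condition 1 for $\Lambda_i$ requires knowing that a subsurface $\Sigma'$ floor-retracting in $\Lambda_i$ (a retraction of $K_i$) yields a subsurface of $\Lambda$ floor-retracting in $\Lambda$ (a retraction of $G$), which needs the retraction to be extended across $\Lambda - \Lambda_i$ rather than merely combined with $V^{(0)}$ via Lemma \ref{SurfacesCanRetractTogether} (the latter produces a floor with decomposition $\Lambda_{V^{(0)} \cup \{v_{\Sigma'}\}}$, not $\Lambda_{\{v_{\Sigma'}\}}$ as the definition of subsurface floor-retraction requires). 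The paper avoids this delicate inheritance check by always quoting condition 1 for $\Lambda$ directly, at the cost of the analogous extension step for a single surface; so the two routes carry the same real content, with the paper's iteration being marginally more economical. Your acknowledged worry about identifying $\Gamma_0$ with $\Lambda_{V^{(0)}}$ ``on the nose'' is real but is shared with the paper, whose ``Thus $G'$ is the free product\ldots'' step rests on the same implicit use of universal compatibility of the JSJ deformation space.
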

In particular the set of surfaces associated to $V_0$ floor-retracts in $\Lambda$, and the tower-retract structure consists of a single floor.

\begin{proof} Suppose the structure of core is given by $G  > G' > \ldots  > H *\F$ with retractions $r,r',...$ and associated decompositions $\Gamma, \Gamma',...$. Since $\Gamma$ is a cyclic graph of groups for $G$ and $G$ is freely indecomposable, by Lemma \ref{RetractingSurfComeFromJSJ} the set of surfaces $\{\Sigma_1, \ldots, \Sigma_p\}$ of 
$\Gamma$ is a set of subsurfaces of surfaces $\Lambda$ which floor-retracts in $\Lambda$, and by the first condition it is in fact a subset of the set of (full) surfaces of $\Lambda$. Thus $G'$ is the free product of fundamental groups of the connected components $\Lambda_1, \ldots, \Lambda_m$ of $\Lambda - \{v_1, \ldots, v_p\}$, where $\{v_1, \ldots, v_p\}$ is a family of surface type vertices which floor-retracts in $\Lambda$.

By the second condition, for each $i$ we have either that $\pi_1(\Lambda_i)$ is freely indecomposable and $JSJ(\pi_1(\Lambda_i)) = \Lambda_i$, or $G_i \simeq \F(a,b)$ and the edge groups of $\Lambda-\Lambda_0$ adjacent to $\Lambda_0$ are conjugate into $\langle [a,b] \rangle$. Suppose without loss of generality that the first condition is satisfied by $i \in \{1, \ldots, l\}$ only.

Now $G'=G_1*\cdots* G_l*\F_{2(m-l)}$ is a Grushko decomposition of $G'$, where $G_i=\pi _1(\Lambda_i)$. Consider the floor structure $(G', G'', r')$ and without loss of generality assume that the associated graph of groups $\Gamma'$ has a unique surface $\Sigma'$ with associated vertex group $S'$. The surface $\Sigma'$ floor-retracts in $\Gamma'$, so Lemma \ref{SurfaceInFreeComponent} implies that up to conjugation we have that $S'$ is contained in one of the factors $G_i$ and there exists a floor structure $(G_i, G'_i, r_i)$ whose associated decomposition $\Gamma_i$ has exactly one surface corresponding to $\Sigma'$. By Lemma \ref{RetractingSurfComeFromJSJ}, we see that $\Sigma'$ must in fact be a subsurface of the JSJ decomposition $\Lambda_i$ of $G_i$ which floor retracts in $\Lambda_i$. Now recall that $\Lambda_i$ is a subgraph of groups of the JSJ decomposition $\Lambda$ of $G$, so $\Sigma'$ floor-retracts in $\Lambda$. By the first condition again, $\Sigma'$ must be a full surface of $\Lambda$, and we may assume that $r'$ restricts to $r_i$ on $G_i$ and to the identity on the other factors of $G'$. By Lemma \ref{SurfacesCanRetractTogether}, we get that $(G,G'',r'\circ r)$ is a floor structure. 

By induction, $G$ has a structure of a single floor retract over $H*\F$ and $H * \F = G'$. By the conditions in the definition of a core, we get that $H = G_1*\cdots* G_l$, which proves the result. 
\end{proof}

\section{Third obstruction to homogeneity: extra symmetries in floor-retracts.} \label{ExtraSymmetries}

There are some more obstructions to homogeneity, which come from the fact that the modular group is not the whole automorphism group (recall Example \ref{SymmetryEx}).

The notion of a homomorphism of graph of groups given in \cite[Definition 2.1]{BassCoveringTheoryGOG} implies that of an isomorphism of a graph of groups. 
\begin{defi} \label{IsoGOGDefi} An \textbf{isomorphism of graphs of groups} $\Phi$ between graphs of groups $\Lambda$ and $\Lambda'$ is given by $(\phi, \{\phi_v\}_{v \in V(\Lambda)},\{\phi_e\}_{e \in E(\Lambda)}, \{\gamma_v\}_{v \in V(\Lambda)},\{\gamma_e\}_{e \in E(\Lambda)})$ where
\begin{itemize}
\item $\phi$  is a graph isomorphism between the underlying graphs;
\item for every $v \in V(\Lambda)$,  $\phi _v:G_v \to G'_{\phi(v)}$ is a group isomorphism and for every $e \in E(\Lambda)$, $\phi _e:G_e \to G'_{\phi(e)}$ is a group isomorphism with $\phi_{\bar{e}}=\phi_{e}$;
\item $\{\gamma_v\}_{v \in V(\Lambda)},\{\gamma_e\}_{e \in E(\Lambda)}$ are sets of elements of $\pi_1(\Lambda')$;
\end{itemize}
The above data are assumed to satisfy that for every $e \in E(\Lambda)$ with $v=o(e)$
\begin{itemize}
\item we have $\gamma_e = \gamma_{v} \delta_e$ for some element $\delta_e$ of $G'_{\phi(v)}$;
\item for any $s \in G_e$, we have $ \gamma_v \phi_{v} (i_e(s)) \gamma^{-1}_{v} = \gamma_e i_{\phi(e)}(\phi_e(s)) \gamma^{-1}_e$. 
 \end{itemize}
\end{defi}

An isomorphism between graphs of groups induces an isomorphism between their fundamental group - this follows from Proposition 2.4 and 2.12 of \cite{BassCoveringTheoryGOG}.
\begin{lemma} \label{IsoGOGGivesISoGroups} Let $\Phi$ be an isomorphism between graphs of groups $\Lambda$ and $\Lambda'$ given by 
$$\Phi=(\phi, \{\phi_v\}_{v \in V(\Lambda)},\{\phi_e\}_{e \in E(\Lambda)}, \{\gamma_v\}_{v \in V(\Lambda)},\{\gamma_e\}_{e \in E(\Lambda)})$$ 
Every choice of a maximal subtree $\Lambda_0$ of $\Lambda$ induces an isomorphism $\phi:\pi_1(\Lambda, \Lambda_0) \to \pi_1(\Lambda', \phi(\Lambda_0))$ such that for any $v \in V(\Lambda)$, and any $s \in G_v$ we have $\phi(s) = \gamma_v \phi_v(s) \gamma^{-1}_v$.
\end{lemma}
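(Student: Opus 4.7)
The plan is to exhibit $\phi$ explicitly on the presentation of $\pi_1(\Lambda,\Lambda_0)$ given in Section \ref{GOGSec}, and then verify the defining relations. Recall this presentation has as generators the vertex groups $G_v$ and Bass-Serre letters $\{t_e\}_{e \in E(\Lambda)}$, with the three families of relations: $t_et_{\bar e}=1$, $t_e=1$ for $e \in \Lambda_0$, and $t_e i_{\bar e}(g) t_{\bar e} = i_e(g)$ for $g \in G_e$.

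First I would define $\phi$ on generators by the formulas
\[
\phi(s) := \gamma_v \phi_v(s)\gamma_v^{-1} \quad (s \in G_v), \qquad \phi(t_e) := \gamma_e\, t'_{\phi(e)}\,\gamma_{\bar e}^{-1}\quad (e \in E(\Lambda)),
\]
which by construction gives exactly the formula in the statement on vertex generators. The first condition $\phi_v$ is an isomorphism makes $\phi$ well-defined on each $G_v$, and the condition $\phi_{\bar e}=\phi_e$ makes $\phi(t_e)$ and $\phi(t_{\bar e})$ mutually inverse candidates. Then I would verify the three families of relations:
\begin{enumerate}
\item[(a)] $\phi(t_e)\phi(t_{\bar e}) = \gamma_e t'_{\phi(e)} t'_{\phi(\bar e)}\gamma_e^{-1} = 1$ using $t'_{\phi(e)}t'_{\phi(\bar e)}=1$ in $\pi_1(\Lambda')$.
\item[(b)] For $e \in \Lambda_0$, one has $\phi(e) \in \phi(\Lambda_0)$ so $t'_{\phi(e)}=1$; the relation $\phi(t_e)=1$ then reduces to $\gamma_e = \gamma_{\bar e}$. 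This can be arranged by replacing the data $\{\gamma_v\},\{\gamma_e\}$ with an equivalent one (in the sense of Bass): the formulas $\phi(s) = \gamma_v \phi_v(s)\gamma_v^{-1}$ on vertex groups are unaffected by modifying $\gamma_e$'s by elements $i_{\phi(e)}(\phi_e(G_e))$-valued coboundaries, and this freedom suffices to normalize the tree edges.
\item[(c)] For $g \in G_e$, unraveling $\phi(t_e)\phi(i_{\bar e}(g))\phi(t_{\bar e})$ and using the second compatibility condition of Definition \ref{IsoGOGDefi} applied to both $e$ and $\bar e$, together with $\gamma_{\bar e}=\gamma_{v'}\delta_{\bar e}$ and $\gamma_e=\gamma_v\delta_e$, I would obtain
\[
\phi(t_e)\phi(i_{\bar e}(g))\phi(t_{\bar e}) \;=\; \gamma_e\,i_{\phi(e)}(\phi_e(g))\,\gamma_e^{-1} \;=\; \gamma_v\phi_v(i_e(g))\gamma_v^{-1} \;=\; \phi(i_e(g)).
\]
\end{enumerate}
This shows $\phi$ descends to a homomorphism $\pi_1(\Lambda,\Lambda_0)\to \pi_1(\Lambda',\phi(\Lambda_0))$.

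To upgrade this to an isomorphism, I would construct an inverse by symmetry: the inverse data
$(\phi^{-1}, \{\phi_v^{-1}\},\{\phi_e^{-1}\},\{\gamma_v^{-1}\text{-conjugates}\},\ldots)$
assembles into an isomorphism of graphs of groups $\Lambda' \to \Lambda$ in the sense of Definition \ref{IsoGOGDefi}; the same recipe yields a homomorphism $\psi$ in the reverse direction. Checking $\psi\circ\phi$ and $\phi\circ\psi$ on the generators (vertex groups and Bass-Serre letters) shows both compositions are the identity, so $\phi$ is an isomorphism.

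The main obstacle is the bookkeeping in item (b): one has to verify that the data of an isomorphism of graphs of groups can indeed be normalized so that $\gamma_e=\gamma_{\bar e}$ on the spanning tree without affecting the prescribed action on vertex groups. Once this normalization is in place (and Bass's Propositions 2.4 and 2.12 can be invoked directly for it), the remaining computations are mechanical applications of the compatibility conditions.
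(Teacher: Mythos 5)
The paper proves Lemma \ref{IsoGOGGivesISoGroups} simply by citing Bass's Propositions 2.4 and 2.12, so your attempt to reconstruct an explicit argument on the spanning-tree presentation is genuinely a different route. Your definitions of $\phi$ on generators and your verifications of relations (a) and (c) are correct and are essentially what Bass's formulas give. The problem is item (b): you correctly identify that you need $\gamma_e=\gamma_{\bar e}$ for $e\in\Lambda_0$ for the map to descend through the spanning-tree relations, but your claimed fix does not hold up. If you are to keep the conclusion that $\phi(s)=\gamma_v\phi_v(s)\gamma_v^{-1}$ with the \emph{given} $\gamma_v$, you cannot touch the $\gamma_v$'s; and the only freedom you then have on $\gamma_e$ (while preserving the second compatibility condition of Definition \ref{IsoGOGDefi}) is right-multiplication by an element of the \emph{centralizer} of $i_{\phi(e)}(\phi_e(G_e))$ inside $G'_{\phi(o(e))}$, not by an arbitrary edge-group-valued coboundary as you assert. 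In a one-edge amalgam $G_u*_{G_e}G_v$ with $\gamma_u=1$, $\gamma_v=g$, $\gamma_e=\delta_e$, $\gamma_{\bar e}=g\,\delta_{\bar e}$, the relation $t_e i_{\bar e}(s)t_{\bar e}=i_e(s)$ forces $\gamma_e\gamma_{\bar e}^{-1}=1$, i.e.\ $g=\delta_e\delta_{\bar e}^{-1}$; if the given data violate this, the centralizer freedom on $\delta_e,\delta_{\bar e}$ is in general not enough to repair it, so your $\phi(t_e)$ is simply not $1$ in $\pi_1(\Lambda',\phi(\Lambda_0))$ and $\phi$ does not descend.

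The route that actually avoids this (and is what Bass's Propositions 2.4 and 2.12, cited by the paper, accomplish) is to define the homomorphism first on Bass's \emph{path group} $\pi(\Lambda)$, where the relations $t_e=1$ on a spanning tree are not imposed: your formulas define a homomorphism $\pi(\Lambda)\to\pi(\Lambda')$ with no normalization needed at all, using only (a) and (c). One then restricts to the fundamental group based at a vertex $v_0\in\Lambda_0$, which is a \emph{subgroup} of $\pi(\Lambda)$ rather than a quotient, so nothing needs to be killed; finally the canonical collapse $\pi_1(\Lambda,v_0)\xrightarrow{\sim}\pi_1(\Lambda,\Lambda_0)$ transports the map to the spanning-tree picture. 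Your proposal would be repaired by adopting this order of operations rather than asserting a normalization that is not available. As written, item (b) is a genuine gap.
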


In fact we have an equivalent condition for two graphs of groups to be isomorphic.
\begin{lemma} \label{IsoGOGIsoTrees} Suppose $\Lambda, \Lambda'$ are graphs of groups for groups $G,G'$ with associated trees $T, T'$. There exists a graph of group isomorphism between $\Lambda$ and $\Lambda'$ if and only if there exists an isomorphism $\phi: G \to G'$ and a $\phi$-equivariant isomorphism $f: T \to T'$. 
\end{lemma}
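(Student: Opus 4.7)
The plan is to exploit the functorial correspondence, in Bass--Serre theory, between graphs of groups and group actions on trees: each graph of groups $\Lambda$ is (up to isomorphism) the quotient $T_\Lambda / G$ equipped with its stabilizer data, and conversely $T_\Lambda$ is the universal cover of $\Lambda$. Both directions of the equivalence will be read off from this correspondence; the forward direction already has half its work done for us by Lemma \ref{IsoGOGGivesISoGroups}.

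For the forward direction, starting from an isomorphism of graphs of groups $\Phi = (\phi, \{\phi_v\}, \{\phi_e\}, \{\gamma_v\}, \{\gamma_e\})$, I would first fix a maximal subtree $\Lambda_0 \subseteq \Lambda$ and invoke Lemma \ref{IsoGOGGivesISoGroups} to obtain the group isomorphism $\phi: G \to G'$ (where $G' = \pi_1(\Lambda', \phi(\Lambda_0))$). To build the $\phi$-equivariant tree isomorphism $f: T \to T'$, I would use the universal cover description in which vertices of $T$ are cosets $gG_v$ (for $v \in V(\Lambda)$ and $g \in G$, with $G_v$ identified with its image in $G$ via a path from a basepoint in $\Lambda_0$), and edges are cosets $gG_e$; the incidence is given by sending $gG_e$ to $gG_{o(e)}$ and $gt_eG_{t(e)}$. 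Define $f$ on vertices by $gG_v \mapsto \phi(g)\gamma_v G'_{\phi(v)}$, and analogously on edges using $\gamma_e$. Equivariance $f(h \cdot x) = \phi(h) \cdot f(x)$ is immediate from the definition, and well-definedness follows from the conjugation identity $\gamma_v \phi_v(i_e(s)) \gamma_v^{-1} = \gamma_e i_{\phi(e)}(\phi_e(s)) \gamma_e^{-1}$ in Definition \ref{IsoGOGDefi}, which ensures $f$ respects incidence; the condition $\gamma_e = \gamma_v \delta_e$ with $\delta_e \in G'_{\phi(v)}$ is exactly what is needed for this compatibility to hold.

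For the reverse direction, starting from $\phi: G \to G'$ and a $\phi$-equivariant $f: T \to T'$, I would descend $f$ to the quotient graphs: since $f$ is $\phi$-equivariant and the quotients $T/G$, $T'/G'$ are $\Lambda$ and $\Lambda'$ respectively, $f$ induces a graph isomorphism $\bar{f}: \Lambda \to \Lambda'$, which we take to be the underlying $\phi$ of the graph of groups isomorphism. For each $v \in V(\Lambda)$ I would pick a lift $\tilde{v} \in T$ whose stabilizer (under the identification of $G_v$ with the stabilizer of some chosen basepoint lift, using edges of the maximal subtree $\Lambda_0$) matches the canonical inclusion into $G$. Then $f(\tilde{v})$ lies over $\bar{f}(v)$ but its stabilizer $\phi(G_v)$ in general differs from the canonical stabilizer of the chosen lift of $\bar{f}(v)$ by conjugation by an element of $G'$; this element is precisely $\gamma_v$, and $\phi_v: G_v \to G'_{\phi(v)}$ is then defined as $s \mapsto \gamma_v^{-1} \phi(s) \gamma_v$. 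Proceed analogously for edges. The compatibility $\gamma_e = \gamma_v \delta_e$ comes from choosing the lifts of adjacent edges and vertices in $T'$ consistently along $\phi(\Lambda_0)$, with Bass--Serre letters absorbing the discrepancy for edges outside the maximal subtree.

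The main obstacle, and the only part that is not essentially tautological, is the bookkeeping of lifts, basepoints and conjugating elements so that the family $\{\gamma_v, \gamma_e\}$ produced in the reverse direction satisfies the compatibility conditions of Definition \ref{IsoGOGDefi}, and conversely that the definition of $f$ in the forward direction is consistent across the two descriptions of a vertex of $T$ coming from two different edge-paths in $\Lambda_0$. This is handled by first fixing lifts of the vertices of $\Lambda_0$ coherently (so that the incidence within the maximal subtree is modelled by the identity), and then reading off $\gamma_v = 1$ for $v \in \Lambda_0$ while $\gamma_e$ for $e \notin \Lambda_0$ records the translation part of $\phi(t_e)$ relative to the chosen lift of $t(e)$.
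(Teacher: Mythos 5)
The paper offers no argument for this lemma beyond a citation to Proposition 2.4 of Bass's \emph{Covering theory for graphs of groups}; your proposal supplies exactly the content of that citation, working through the Bass--Serre dictionary between graphs of groups and actions on trees. Both directions of your argument are sound: the forward direction correctly builds $f$ on the coset description $T = \coprod_v G/G_v$ via $gG_v \mapsto \phi(g)\gamma_v G'_{\phi(v)}$, with well-definedness coming from the conjugation identity $\phi(s) = \gamma_v\phi_v(s)\gamma_v^{-1}$ of Lemma \ref{IsoGOGGivesISoGroups} and incidence coming from $\gamma_e = \gamma_{o(e)}\delta_e$; the reverse direction correctly descends $f$ to the quotient graph and reads off $\gamma_v$ as the element of $G'$ mediating between $\phi(\mathrm{Stab}_G(\tilde v))$ and the canonical copy of $G'_{\phi(v)}$ in $G'$, with $\phi_v = \gamma_v^{-1}\phi(\cdot)\gamma_v$. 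Since the paper's ``proof'' is the citation, there is no alternative route in the paper to compare against; your argument is the standard one that the cited proposition encapsulates. The one place you are slightly cavalier is the incidence check at the terminal endpoint of non-tree edges in the forward direction, which needs the explicit formula for $\phi(t_e)$ in terms of the $\gamma$'s and the Bass--Serre letters $t'_{\phi(e)}$ of $\pi_1(\Lambda',\phi(\Lambda_0))$; you flag this bookkeeping but do not spell it out. That is a presentational gap, not a mathematical one, and it is precisely the content absorbed by Bass's machinery.
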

This also follows from Proposition 2.4 of \cite{BassCoveringTheoryGOG}

We slightly extend this notion to encompass "disconnected" graphs of groups (which might be obtained by erasing some vertices and edges in a usual graph of groups for example).

\begin{defi} An isomorphism of graphs of groups between collections of graphs of groups $\Lambda_1, \ldots, \Lambda_r$ and $\Lambda'_1, \ldots, \Lambda'_{r}$ consists of a bijection $\sigma: \{1, \ldots , r\} \to \{1, \ldots, r\}$ together with a collection of isomorphisms of graphs of groups $\Phi_i: \Lambda_i \to \Lambda'_{\sigma(i)}$ for $i \in \{1, \ldots, r\}$.
\end{defi} 

To extend isomorphisms between factors of cores of $G$ to automorphisms of $G$, the following result will be central: 
\begin{prop} \label{IsoHypGroupsGivesIsoJSJ} Let $H,K$ be freely indecomposable, torsion free hyperbolic groups. Let $\Lambda_H, \Lambda_K$ be their respective maximal-cyclic JSJ decomposition. If $\phi: H \to K$ is an isomorphism, it induces an isomorphism of graphs of groups between $\Lambda_H$ and $\Lambda_K$.
\end{prop}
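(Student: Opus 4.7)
The plan is to reduce the statement directly to the two tools the paper has already set up: the canonicity/uniqueness of the maximal cyclic JSJ tree (Remark \ref{UniquenessMaxCyclicJSJ}) and the equivalence between graph-of-groups isomorphisms and equivariant tree isomorphisms (Lemma \ref{IsoGOGIsoTrees}). There is essentially nothing else to do.

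First I would transport the $K$-action on $T_{\Lambda_K}$ back to $H$ along $\phi$, i.e.\ define an action of $H$ on the simplicial tree $T_{\Lambda_K}$ by $h \cdot x = \phi(h) \cdot x$. Since $\phi$ is an isomorphism, edge stabilizers for this $H$-action are precisely the $\phi^{-1}$-images of the edge stabilizers of the original $K$-action, hence maximal cyclic in $H$. Moreover, universal ellipticity and the domination property are invariant under the isomorphism $\phi$ (they are purely statements about the deformation space of $(H, \cdot)$-trees, which $\phi$ identifies with the deformation space of $K$-trees). Consequently $T_{\Lambda_K}$, viewed as an $H$-tree via $\phi$, is itself a maximal cyclic JSJ tree for $H$; and being its own tree of cylinders is a property invariant under equivariant tree isomorphisms, so it also passes the second step of the construction described in Section \ref{MaxCyclicJSJ}.

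Next I would invoke the uniqueness part of Remark \ref{UniquenessMaxCyclicJSJ}: the maximal cyclic JSJ tree of a freely indecomposable torsion free hyperbolic group is canonical, so any two such trees are equivariantly isomorphic. Applied to the two $H$-trees $T_{\Lambda_H}$ and $T_{\Lambda_K}$ (the latter acted on via $\phi$), this gives a simplicial tree isomorphism $f: T_{\Lambda_H} \to T_{\Lambda_K}$ satisfying $f(h \cdot x) = \phi(h) \cdot f(x)$ for every $h \in H$ and $x \in T_{\Lambda_H}$.

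Finally, apply Lemma \ref{IsoGOGIsoTrees}: the pair $(\phi, f)$ consisting of the group isomorphism $\phi: H \to K$ together with the $\phi$-equivariant tree isomorphism $f: T_{\Lambda_H} \to T_{\Lambda_K}$ yields an isomorphism of graphs of groups between $\Lambda_H$ and $\Lambda_K$ in the sense of Definition \ref{IsoGOGDefi}, which is exactly the conclusion. The only subtle point, which is not really an obstacle but deserves one line of justification, is why the auxiliary first step (refining once-punctured Klein bottle vertices) and the second step (saturating edge groups to maximal cyclic ones) in the construction of the maximal cyclic JSJ are compatible with $\phi$: both are defined purely from the algebraic data of the tree of cylinders, so any $\phi$-equivariant isomorphism of trees of cylinders extends canonically to a $\phi$-equivariant isomorphism of the resulting maximal cyclic JSJ trees.
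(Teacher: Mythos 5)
Your proof is correct and follows exactly the same route as the paper's: it unpacks Remark \ref{UniquenessMaxCyclicJSJ} to get a $\phi$-equivariant isomorphism $T_{\Lambda_H}\to T_{\Lambda_K}$, then applies Lemma \ref{IsoGOGIsoTrees} to translate this into a graph-of-groups isomorphism. The paper's proof is a two-line version of what you wrote; your extra verification that $T_{\Lambda_K}$ pulled back along $\phi$ is a maximal cyclic JSJ tree for $H$ is precisely the content the paper leaves implicit inside the cited remark.
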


\begin{proof} This is a direct consequence of Remark \ref{UniquenessMaxCyclicJSJ}, which says that there is a $\phi$-equivariant isomorphism between the trees $T_H, T_K$ corresponding to $\Lambda_H, \Lambda_K$. By Lemma \ref{IsoGOGIsoTrees}, this means that there is an isomorphism of graphs of groups between $\Lambda_H$ and $\Lambda_K$. 
\end{proof}

We can now give the third obstruction for a group to be homogeneous.
\begin{prop}  Let $G$ be a torsion-free freely indecomposable hyperbolic group which is not the fundamental group of a closed surface of characteristic at least $-2$. Let $\Lambda$ be the maximal-cyclic JSJ decomposition of $G$. 

Let $V_0, W_0$ be sets of socket type vertices of $\Lambda$ whose corresponding surface sets floor-retract in $\Lambda$, and denote by $(\Lambda_0, \ldots, \Lambda_q)$ and $(\Gamma_0, \ldots, \Gamma_t)$ the connected components  of $\Lambda-V_0$ and $\Lambda - W_0$ respectively. Assume that $m \leq q, t$ is such that the only components of $\Lambda-V_0$ (respectively $\Lambda - W_0$) with free fundamental group are $\Lambda_{m+1}, \ldots, \Lambda_q$ and possibly $\Lambda_0$ (respectively $\Gamma_{m+1}, \ldots, \Gamma_t$ and possibly $\Gamma_0$). 

If there is an isomorphism of graphs of groups $\Phi$ between $(\Lambda_0, \ldots, \Lambda_{m})$ and $(\Gamma_0, \ldots, \Gamma_{m})$ such that $\Phi \mid_{\Lambda_0}$ does not extend to an isomorphism of graphs of groups of $\Lambda$, then $G$ is not homogeneous.
\end{prop}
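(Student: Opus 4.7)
The plan is to invoke the non-homogeneity criterion of Proposition \ref{CriterionForNonHomogeneity}, using the two hyperbolic floor structures determined by the floor-retracting sets $V_0$ and $W_0$, together with the group-level isomorphism induced by $\Phi|_{\Lambda_0}$.

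First I would produce the floor structures. Since the surfaces indexed by $V_0$ floor-retract in $\Lambda$, Definition \ref{RetractingSurface} yields an (extended) hyperbolic floor $(G, U, r_U)$ whose associated decomposition is $\Lambda_{V_0}$ and whose target admits a free product decomposition $U \simeq \pi_1(\Lambda_0) * \cdots * \pi_1(\Lambda_q)$. Grouping the free components into a single free factor $F_U$, this becomes a Grushko decomposition $U = U_1 * \cdots * U_l * F_U$ whose non-cyclic indecomposable factors are in bijection with the non-free $\pi_1(\Lambda_i)$ for $0 \le i \le m$ (if $\Lambda_0$ itself is free then it is absorbed into $F_U$ and I would use the ``free factor'' alternative of the criterion instead). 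An analogous construction with $W_0$ provides $(G, V, r_V)$ with Grushko decomposition $V = V_1 * \cdots * V_l * F_V$, and the bijection underlying $\Phi$ lets me relabel so that $U_1 = \pi_1(\Lambda_0)$, $V_1 = \pi_1(\Gamma_0)$, and $U_i \simeq V_i$ via $\Phi|_{\Lambda_i}$ for every $i$; this verifies the first hypothesis of Proposition \ref{CriterionForNonHomogeneity}.

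By Lemma \ref{IsoGOGGivesISoGroups}, $\Phi|_{\Lambda_0}$ induces a group isomorphism $\phi: U_1 = \pi_1(\Lambda_0) \to \pi_1(\Gamma_0) = V_1$. To conclude via Proposition \ref{CriterionForNonHomogeneity}, it suffices to show that $\phi$ does not extend to an automorphism of $G$. Suppose for contradiction that it extends to some $\theta \in \Aut(G)$. Since $\Lambda$ is the maximal cyclic JSJ decomposition of $G$, by Proposition \ref{IsoHypGroupsGivesIsoJSJ} (applied to $\theta$ viewed as an isomorphism $G \to G$), $\theta$ induces an isomorphism of graphs of groups $\Psi: \Lambda \to \Lambda$, underlied by a $\theta$-equivariant tree automorphism $f: T_\Lambda \to T_\Lambda$ via Remark \ref{UniquenessMaxCyclicJSJ}.

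To contradict the hypothesis that $\Phi|_{\Lambda_0}$ does not extend to an isomorphism of graphs of groups of $\Lambda$, I would show that $\Psi$ restricts to $\Phi|_{\Lambda_0}$ on the subgraph $\Lambda_0 \subset \Lambda$. The $\theta$-equivariance of $f$, together with $\theta(\pi_1(\Lambda_0)) = \pi_1(\Gamma_0)$, forces $f$ to carry the unique minimal $\pi_1(\Lambda_0)$-invariant subtree of $T_\Lambda$ (a lift of $\Lambda_0$) to the minimal $\pi_1(\Gamma_0)$-invariant subtree (a lift of $\Gamma_0$). Passing back to the quotient $\Lambda$, the underlying graph map of $\Psi$ sends $\Lambda_0$ to $\Gamma_0$, and $\Psi|_{\Lambda_0}: \Lambda_0 \to \Gamma_0$ is then an isomorphism of graphs of groups inducing $\phi = \Phi|_{\Lambda_0}$ on fundamental groups. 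The hardest step is the final identification: two graph-of-groups isomorphisms inducing the same group-level map may disagree in the auxiliary conjugating data $\{\gamma_v\}, \{\gamma_e\}$ of Definition \ref{IsoGOGDefi}, so I would need to show that this slack can be absorbed by pre- and post-composing $\Psi$ with inner automorphisms of the graph of groups, ultimately yielding a bona fide extension of $\Phi|_{\Lambda_0}$ to an automorphism of $\Lambda$ and the desired contradiction.
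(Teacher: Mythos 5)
Your proposal follows essentially the same route as the paper's proof: produce the two floor structures from $V_0$ and $W_0$, use $\Phi$ together with Lemma~\ref{IsoGOGGivesISoGroups} to get a group isomorphism $\phi$ between the non-free factors, and then argue that $\phi|_{\pi_1(\Lambda_0)}$ cannot extend to an automorphism of $G$ by appealing to Proposition~\ref{IsoHypGroupsGivesIsoJSJ}, which lets you conclude non-homogeneity via Proposition~\ref{CriterionForNonHomogeneity}. The paper's proof simply asserts at the end that the graph-of-groups automorphism $\Psi$ of $\Lambda$ coming from a hypothetical $\theta \in \Aut(G)$ ``extends $\Phi|_{\Lambda_0}$''; your proposal makes this step explicit, correctly noting via the equivariant tree map that the underlying graph map of $\Psi$ carries $\Lambda_0$ to $\Gamma_0$. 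You also correctly flag the subtle point that the paper leaves implicit: two graph-of-groups isomorphisms $\Lambda_0 \to \Gamma_0$ inducing the same group isomorphism $\pi_1(\Lambda_0)\to\pi_1(\Gamma_0)$ may differ in the auxiliary conjugating data $\{\gamma_v\},\{\gamma_e\}$, so identifying $\Psi|_{\Lambda_0}$ with $\Phi|_{\Lambda_0}$ requires the notion of ``extension'' to be taken up to such adjustments. This reconciliation is indeed needed, and it amounts to noting that the third condition in Theorem~\ref{MainFreelyIndec} should be read modulo inner modifications of the $\gamma$-data (equivalently, that what matters is whether the induced group isomorphism extends, not the particular choice of $\gamma$'s) — but you should complete that step rather than leave it as ``I would need to show,'' since as written your proof stops exactly where the argument has to close.
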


\begin{proof} Let $H_i = \pi_1(\Lambda_i)$ and $K_i = \pi_1(\Gamma_i)$. Note that there are hyperbolic floor structures $(G, H_0* \ldots* H_q, r)$ and $(G, K_0* \ldots* K_t, r')$.

Up to renumbering, there is an isomorphism $\Lambda_i \simeq \Gamma_i$ for each $0 \leq i \leq m$ which, by Proposition \ref{IsoGOGGivesISoGroups}, induces an isomorphism $H_i \to K_i$. Thus we get an isomorphism $\phi: H_0 * \ldots * H_{m} \to K_0 * \ldots * K_m$. 

If $G$ were homogeneous, by Proposition \ref{CriterionForNonHomogeneity} we would get that $\phi \mid_{H_0}$ can be extended to an automorphism of $G$. By Theorem \ref{IsoHypGroupsGivesIsoJSJ}, every such automorphism induces an automorphism of graphs of groups of $\Lambda$, which extends $\Phi \mid_{\Lambda_0}$, so we would get a contradiction.
\end{proof}

\section{Full characterisation of homogeneity}

We can now give a full characterization of homogeneous hyperbolic groups in the freely indecomposable case. 
  \begin{thm} \label{MainFreelyIndec} Let $G$ be a torsion-free freely indecomposable hyperbolic group which is not the fundamental group of a closed surface of characteristic at least $-2$. Let $\Lambda$ be the maximal-cyclic JSJ decomposition of $G$. Then $G$ is homogeneous if and only if
\begin{enumerate}
\item no proper subsurface of a surface of $\Lambda$ floor-retracts;
\item if $V_0 \subseteq V(\Lambda)$ is a set surface type vertices whose corresponding set of surfaces floor-retracts in $\Lambda$, and $\Lambda_1, \ldots, \Lambda_k$ are the connected components of $\Lambda - V_0$, then for each $i$, either $\pi _1(\Lambda_i)$ is freely indecomposable and $\Lambda_i = JSJ(\pi_1(\Lambda_i))$, or $\pi_i(\Lambda_i) = \F(a,b)$ and all edge groups of $\Lambda-\Lambda_i$ adjacent to $\Lambda_i$ are conjugate into $\langle[a,b] \rangle$.
\item if $V_0, W_0$ are sets of surface type vertices whose corresponding surface sets floor-retract in $\Lambda$, and if there is an isomorphism of graphs of groups $\Phi$ between the connected components $(\Lambda_0, \ldots, \Lambda_m)$ and $(\Gamma_0, \ldots, \Gamma_m)$ of  $\Lambda-V_0$ and $\Lambda - W_0$ whose fundamental group is not free (except possibly for $\Lambda_0, \Gamma_0$); then $\Phi \mid_{\Lambda_0}$ extends to an isomorphism of graphs of groups of $\Lambda$.  
\end{enumerate}
\end{thm}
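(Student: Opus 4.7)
The proof splits into the two implications.

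For the \emph{necessity} direction, each failure of a condition has already been shown to obstruct homogeneity in the preceding sections. If condition $(1)$ fails, non-homogeneity follows from Proposition~\ref{SubsurfaceRetractNonHomogeneous}; if $(2)$ fails, from Proposition~\ref{GrowingJSJNonHomogeneous}; if $(3)$ fails, from the proposition at the end of Section~\ref{ExtraSymmetries}. So this direction reduces to a concatenation of the previously established obstructions, together with a brief check that the hypotheses align in each case (in particular, that the floor structures producing the obstructions for $(2)$ and $(3)$ are of the form coming from a floor-retracting set of surfaces of $\Lambda$, so that the hypotheses of those propositions are indeed met).

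For the \emph{sufficiency} direction, the plan is to invoke Theorem~\ref{EqualTypesImpliesIsomorphismOfCores} (as used in the proof of Corollary~\ref{HomogeneityIsExtendingAut}) to reduce the problem to the following claim: whenever $U, V$ are cores of $G$ with respect to tuples $\bar u, \bar v$ and $\phi : U \to V$ is an isomorphism with $\phi(\bar u) = \bar v$, the isomorphism $\phi$ extends to an automorphism of $G$. Under $(1)$ and $(2)$, Proposition~\ref{SubtowersUnderFirstTwoCond} provides a rigid description of cores: $U$ arises from a single floor structure $(G, U * \F_U, r_U)$ whose floor decomposition is $\Lambda_{V_0}$ for some set $V_0$ of surface type vertices whose surfaces floor-retract in $\Lambda$, and the non-cyclic freely indecomposable factors $U_0, U_1, \ldots, U_l$ of the Grushko decomposition of $U$ relative to $\bar u \in U_0$ are precisely the fundamental groups of the non-free connected components $\Lambda_0, \ldots, \Lambda_l$ of $\Lambda - V_0$. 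The analogous description holds for $V$ via a set $W_0$ and components $\Gamma_0, \ldots, \Gamma_m$.

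Uniqueness of the relative Grushko decomposition, combined with $\phi(\bar u)=\bar v$, forces $l = m$ and gives (up to conjugation and reordering) isomorphisms $\phi_i : U_i \to V_i$ with $\phi_0$ sending $\bar u$ to $\bar v$. When $U_i$ is freely indecomposable and non-cyclic, Proposition~\ref{IsoHypGroupsGivesIsoJSJ} promotes $\phi_i$ to an isomorphism of graphs of groups $\Lambda_i \to \Gamma_i$; the $\F_2$ exceptional case permitted by~$(2)$ has trivial maximal cyclic JSJ and is handled directly, the once-punctured-torus structure being preserved by the isomorphism. We thus obtain an isomorphism of graphs of groups $\Phi$ between the collections $(\Lambda_0, \ldots, \Lambda_l)$ and $(\Gamma_0, \ldots, \Gamma_l)$. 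Condition $(3)$ then extends $\Phi|_{\Lambda_0}$ to an isomorphism of graphs of groups of the whole $\Lambda$, and Lemma~\ref{IsoGOGGivesISoGroups} converts this into an automorphism $\tilde\phi$ of $G$. The main technical point, where I expect the bookkeeping to be the most delicate, is to arrange the conjugation data $\{\gamma_v\}$ in the definition of the graph of groups isomorphism so that $\tilde\phi$ agrees with $\phi_0$ on $U_0$ \emph{exactly}, rather than only up to an inner automorphism of $G$; since $\bar u$ lies in $\pi_1(\Lambda_0)$ and the remaining ambiguity is by conjugation in $G$, this can be enforced by post-composing $\tilde\phi$ with an appropriate inner automorphism, yielding $\tilde\phi(\bar u)=\bar v$ as required.
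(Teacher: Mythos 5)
Your proof is correct and takes essentially the same approach as the paper's: necessity is a direct concatenation of Propositions~\ref{SubsurfaceRetractNonHomogeneous}, \ref{GrowingJSJNonHomogeneous} and the Section~\ref{ExtraSymmetries} proposition, while sufficiency uses Theorem~\ref{EqualTypesImpliesIsomorphismOfCores} plus Proposition~\ref{SubtowersUnderFirstTwoCond} to identify the cores with subgraphs of $\Lambda$, then Proposition~\ref{IsoHypGroupsGivesIsoJSJ} and condition~(3) to extend the core isomorphism. Your extra remarks on the $\F_2$ component (where Proposition~\ref{IsoHypGroupsGivesIsoJSJ} does not literally apply, but $\Aut(\F_2)$ preserves $[a,b]$ up to conjugation and inversion so the relative once-punctured-torus decomposition is still carried over) and on the inner-automorphism adjustment to get $\tilde\phi(\bar u)=\bar v$ on the nose are details the paper passes over silently, and they tighten the argument correctly.
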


\begin{proof}The  discussion in Sections \ref{RetractingSubsurfacesSection}, \ref{GrowingJSJSection} and \ref{ExtraSymmetries} show these three conditions to be necessary for $G$ to be homogeneous.

Suppose now that all three conditions hold. To see that $G$ is homogeneous, let $\overline h, \overline k$ be tuples in $G$ with $tp_G\left(\overline h\right)=tp_G\left(\overline k\right)$. Let $H,K$ respectively be cores of $\overline h, \overline k$ in $G$. By Proposition \ref{SubtowersUnderFirstTwoCond}, there exist sets of retracting surface type vertices $V_0, W_0$ of $\Lambda, \Gamma$ respectively such that if $\Lambda_0, \ldots , \Lambda_{m}$ and $\Gamma_0, \ldots, \Gamma_{l}$ respectively denote the connected components of $\Lambda - V_0$, $\Lambda-W_0$ respectively, then 
\begin{itemize}
\item $H = \pi_1(\Lambda_0) * \ldots * \pi_1(\Lambda_{m'})$ and $K =  \pi_1(\Gamma_0) * \ldots * \pi_1(\Gamma_{l'})$ where $\Lambda_0, \ldots , \Lambda_{m'}$ and $\Gamma_0, \ldots, \Gamma_{l'}$ are precisely the components of $\Lambda - V_0$ (respectively $\Gamma- W_0$) with non free fundamental groups (except possibly for $\Lambda_0$ and $\Gamma_0$);
\item $\bar{h} \in H_0$ and $\bar{k} \in K_0$.
\end{itemize}
By the second condition, the JSJ of the fundamental group $H_i$ (resp. $K_i$) of $\Lambda_i$ (resp. $\Gamma_i$) is exactly $\Lambda_i$ (resp. $\Gamma_i$) for all $i \leq m$ (respectively $i \leq l$). 

Theorem \ref{EqualTypesImpliesIsomorphismOfCores} implies that there is an isomorphism $\phi:H\to K$ sending $\overline h$ to $\overline k$. By the second condition $H_0 * \ldots * H_{m'}$ (respectively $K_0 * \ldots * K_{l'}$) is a Grushko decomposition for $H$ relative to $\overline h$ (respectively for $K$ relative to $\overline k$), so $m'=l'$ and up to renumbering, we can assume $\phi\mid_{H_i}$ is an isomorphism $H_i \to K_i$. By Theorem \ref{IsoHypGroupsGivesIsoJSJ}, this induces an isomorphism of graphs of groups between $\Lambda_i$ and $\Gamma_i$ for all $i \leq m'$. 

By the third condition, the isomorphism $\phi\mid_{H_0}: \Lambda_0 \to \Gamma_0$ can be extended to an automorphism of graphs of groups of $\Lambda$, which induces an automorphism of $G$ extending $\phi\mid_{H_0}$. 
\end{proof}

We now deal with the general (not necessarily freely indecomposable) case. Let $G$ be a torsion free hyperbolic group, with Grushko decomposition $G = G_1 * \ldots* G_s*\F_r$ where the factors $G_i$ are freely indecomposable and not cyclic.

The following remark is a straightforward consequence of Proposition \ref{SurfaceInFreeComponent}:
\begin{rmk} \label{TowerRetractsInFreeProducts} Let $G$ be a torsion free hyperbolic group, and let $G=G_1 * \ldots * G_t* \F $ be the Grushko decomposition of $G$ relative to some tuple $h$, where $h \in G_1$. 
	
	 If $H$ is a core of $G$ relative to $h$, then $H$ is a free product $H_1* \ldots * H_t*\F$ where $H_i$ is a core of $G_i$ relative to $h$ if $i=1$, and to the trivial element if $i>1$. 
\end{rmk}

We deduce
\begin{cor}  Let $G$ be a torsion free hyperbolic group, with Grushko decomposition $G = G_1 * \ldots* G_s*\F_r$ where the factors $G_i$ are freely indecomposable and not cyclic. If $G$ is homogeneous, then each factor $G_i$ is homogeneous.
\end{cor}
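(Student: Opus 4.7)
The plan is to transfer the equality of types from $G_i$ up to the ambient group $G$, invoke homogeneity of $G$ to get an automorphism moving one tuple to the other, and then restrict to $G_i$ using malnormality of the free factors in a free product. Fix $i=1$ without loss of generality, and let $\bar u, \bar v \in G_1$ be tuples with $\tp^{G_1}(\bar u) = \tp^{G_1}(\bar v)$; the case $\bar v = 1$ is immediate, so assume $\bar v \neq 1$.

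First I would apply Theorem \ref{EqualTypesImpliesIsomorphismOfCores} inside $G_1$ to obtain cores $U_1, V_1 \leq G_1$ of $G_1$ relative to $\bar u, \bar v$, together with an isomorphism $\psi : U_1 \to V_1$ satisfying $\psi(\bar u) = \bar v$. Fixing cores $W_j$ of each $G_j$ for $j \geq 2$ once and for all, I would then verify, in the spirit of the proof of Remark \ref{TowerRetractsInFreeProducts}, that $U = U_1 * W_2 * \ldots * W_s * F$ and $V = V_1 * W_2 * \ldots * W_s * F$ are cores of $G$ relative to $\bar u$ and $\bar v$ respectively (for a suitable common free group $F$), by assembling the tower of each $G_j$ over its core into a tower of $G$ (attaching the other Grushko factors to each floor decomposition via trivially stabilized edges) and checking the remaining conditions in the definition of a core. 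The isomorphism $\psi$ then extends to an isomorphism $\Psi : U \to V$ by the identity on the common factors, with $\Psi(\bar u) = \bar v$.

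To conclude $\tp^G(\bar u) = \tp^G(\bar v)$, I would apply Proposition \ref{TypesOfFactorOfSubtowerArePreserved} iteratively along the tower from $G$ down to $U$, and symmetrically to $V$, obtaining $\tp^G(\bar u) = \tp^U(\bar u)$ and $\tp^G(\bar v) = \tp^V(\bar v)$. Since $\Psi$ is an isomorphism sending $\bar u$ to $\bar v$, we have $\tp^U(\bar u) = \tp^V(\bar v)$, and hence $\tp^G(\bar u) = \tp^G(\bar v)$. Homogeneity of $G$ now produces $\phi \in \Aut(G)$ with $\phi(\bar u) = \bar v$. Since $G_1$ is freely indecomposable and non-cyclic, the uniqueness of the Grushko decomposition forces $\phi(G_1)$ to be a conjugate $g G_{\sigma(1)} g^{-1}$ of some non-cyclic factor $G_{\sigma(1)}$. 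The non-trivial element $\bar v = \phi(\bar u)$ lies in the intersection $\phi(G_1) \cap G_1$, and malnormality of free factors in a free product forces $\sigma(1) = 1$ and $g \in G_1$; thus $\phi(G_1) = G_1$ and the restriction $\phi|_{G_1}$ is the desired automorphism of $G_1$ taking $\bar u$ to $\bar v$.

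The main technical delicacy lies in the iterated application of Proposition \ref{TypesOfFactorOfSubtowerArePreserved}: I would have to verify that extending each floor of the tower of a given $G_j$ to $G$ by attaching the other Grushko factors via trivially stabilized edges is a genuine hyperbolic floor of $G$ (with bipartite structure and non-abelian images of the surface vertex groups preserved), and that the group at each intermediate stage is non-abelian so the strong conclusion $\tp^H(\bar g) = \tp^G(\bar g)$ applies rather than $\tp^{H*\Z}(\bar g) = \tp^G(\bar g)$. Non-abelianity is automatic whenever at least one non-cyclic freely indecomposable factor remains at the current stage; the few degenerate cases in which all remaining factors have already been reduced to cyclic groups can be patched by absorbing an extra $\Z$-factor and invoking the elementary embedding of a non-cyclic torsion-free hyperbolic group into its free product with a free group.
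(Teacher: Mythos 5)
Your proposal is correct and follows essentially the same route as the paper's proof: obtain cores of $G$ relative to $\bar u, \bar v$ that decompose along the Grushko factors via Remark \ref{TowerRetractsInFreeProducts}, build an isomorphism of cores sending $\bar u$ to $\bar v$ from Theorem \ref{EqualTypesImpliesIsomorphismOfCores}, push up to $\tp^G(\bar u) = \tp^G(\bar v)$ via Proposition \ref{TypesOfFactorOfSubtowerArePreserved}, and then restrict the resulting automorphism of $G$ to $G_1$ using uniqueness and malnormality of the Grushko factors. The only cosmetic differences are that the paper phrases the argument contrapositively (assuming $G_1$ not homogeneous and deducing $G$ not homogeneous) and works with $\tp^{U*\Z}$ rather than $\tp^U$, while you state the direct implication and spell out the iteration of Proposition \ref{TypesOfFactorOfSubtowerArePreserved} along the tower together with the non-abelianity needed for the stronger conclusion --- a point the paper glosses over by citing the proposition as if a single application suffices.
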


\begin{proof} Suppose without loss of generality that $G_1$ is not homogeneous. Let $u,v$ be tuples in $G_1$ such that $\tp^{G_1}(u)=\tp^{G_1}(v)$ but no automorphism of $G_1$ sends $u$ to $v$. It is enough to show that $\tp^G(u)=\tp^G(v)$ since any automorphism of $G$ sending $u$ to $v$ must send $G_1$ isomorphically onto itself (it is the smallest free factor containing $u$). 
	
	Now by Remark \ref{TowerRetractsInFreeProducts} the core $U$ of $G$ relative to $u$ (respectively $v$) is a free product $U_1* \ldots * U_s*\F_r$ (respectively $V_1* \ldots * V_s*\F_r$) where $U_1, V_1$ are cores of $G_1$ relative to $u, v$, and $U_i, V_i$ are cores of $G_i$ relative to the trivial element if $i>1$. By Theorem \ref{EqualTypesImpliesIsomorphismOfCores},  for each $i$ there is an isomorphism $U_i \to V_i$, sending $u$ to $v$ if $i=1$.  Thus we get an isomorphism $U \to V$ sending $u$ to $v$. This implies that $\tp^{U*\Z}(u) = \tp^{V*\Z}(v)$. By Proposition \ref{TypesOfFactorOfSubtowerArePreserved}, we have $\tp^G(u) = \tp^{U*\Z}(u) = \tp^{V*\Z}(v) = \tp^G(v)$ which terminates the proof.
\end{proof}

To give a general criterion for homogeneity of a torsion free hyperbolic group, we need to slightly generalize the concepts that appear in the criterion we gave in the freely indecomposable case.

\begin{defi} Let $\Lambda =\{\Lambda_1, \ldots, \Lambda_s\}$ be a set of graph of groups (we think of $\Lambda$ as a possibly disconnected graph of groups). Let ${\cal S}$ be a set of surface type vertices of $\Lambda$, that is, ${\cal S} = {\cal S}_1 \cup \ldots \cup {\cal S}_s$ where for each $i$, ${\cal S}_i$ is a set of surfaces of the graph of groups $\Lambda_i$. The set of surfaces associated to ${\cal S}$ is said to floor-retract in $\Lambda$ if each ${\cal S}_i$ floor-retracts in $\Lambda_i$.
\end{defi}

\begin{rmk} Let $G$ be a torsion free hyperbolic group, with $G = G_1 * \ldots* G_s*\F_r$, and for each $i$ let $\Lambda_i$ be a graph of groups with surfaces whose fundamental group is $G_i$. Let $\Lambda =\{\Lambda_1, \ldots, \Lambda_s\}$, and suppose $W$ is a set of surface type vertices of $\Lambda$ whose corresponding surface set floor-retracts. Then there is a floor structure $(G, H, r)$ where $H$ is the free product of the fundamental groups of the connected components of $\Lambda - W$. 
\end{rmk}

We can now state the general criterion for homogeneity
\begin{thm} \label{MainGeneral} Let $G$ be a torsion free hyperbolic group, with $G = G_1 * \ldots* G_s*\F_t$ where the factors $G_i$ are freely indecomposable and not cyclic. Denote by $\Lambda$ the collection of the maximal cyclic JSJ decompositions $\Lambda_1, \ldots, \Lambda_s$ of $G_1, \ldots, G_s$ respectively. Then $G$ is homogeneous if and only if 
	\begin{enumerate}
		\item no proper subsurface of a surface of $\Lambda$ floor-retracts;
		\item if $V_0 \subseteq V(\Lambda)$ is a set surface type vertices whose corresponding set of surfaces floor-retracts in $\Lambda$, and $\Gamma_1, \ldots, \Gamma_k$ are the connected components of $\Lambda - V_0$, then for each $i$, either $\pi _1(\Gamma_i)$ is freely indecomposable and $\Gamma_i = JSJ(\pi_1(\Gamma_i))$, or $\pi_i(\Gamma_i) = \F(a,b)$ and all edge groups of $\Lambda-\Gamma_i$ adjacent to $\Gamma_i$ are conjugate into $\langle[a,b] \rangle$.
		\item if $V_0, W_0$ are sets of surface type vertices whose corresponding surface sets floor-retract in $\Lambda$, and if there is an isomorphism of graphs of groups $\Phi$ between the connected components $(\Gamma_0, \ldots, \Gamma_m)$ and $(\Gamma'_0, \ldots, \Gamma'_m)$ of  $\Lambda-V_0$ and $\Lambda - W_0$ whose fundamental group is not free (except possibly for $\Gamma_0, \Gamma'_0$); then $\Phi \mid_{\Gamma_0}$ extends to an isomorphism of graphs of groups of $\Lambda$.  
	\end{enumerate}
\end{thm}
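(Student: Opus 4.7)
The strategy is to reduce to the freely indecomposable case (Theorem \ref{MainFreelyIndec}) by exploiting the Grushko decomposition $G = G_1 * \ldots * G_s * \F_t$, combined with the fact (proved in the corollary just above) that homogeneity of $G$ forces homogeneity of each factor $G_i$, and the fact (from Remark \ref{TowerRetractsInFreeProducts}) that cores of $G$ decompose as free products of cores of the individual factors.

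For necessity, I would first observe that since the underlying graph of $\Lambda$ is the disjoint union of the underlying graphs of $\Lambda_1, \ldots, \Lambda_s$, any floor-retracting set of surfaces in $\Lambda$ is a disjoint union of floor-retracting sets in the individual $\Lambda_i$'s, and any connected component of $\Lambda - V_0$ lies entirely inside one $\Lambda_i$. Hence conditions (1) and (2) of the theorem are simply the conjunctions of the corresponding conditions of Theorem \ref{MainFreelyIndec} applied to each $G_i$. Their necessity then follows from the necessity direction of Theorem \ref{MainFreelyIndec} applied to each $G_i$, combined with the corollary. For condition (3), the subtlety is that an isomorphism of graphs of groups $\Phi$ between collections of components may permute components lying in distinct $\Lambda_i$'s; nevertheless, the floor-retracting sets $V_0, W_0$ provide hyperbolic floor structures $(G, H*\F_U, r_U)$ and $(G, K*\F_V, r_V)$ where $H, K$ are free products of the non-free components' fundamental groups. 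Then $\Phi\mid_{\Gamma_0}$ induces an isomorphism between a Grushko factor of $H$ and a Grushko factor of $K$ which, if not extending to an automorphism of $G$, contradicts homogeneity via Proposition \ref{CriterionForNonHomogeneity}.

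For sufficiency, assume the three conditions hold and let $\bar h, \bar k$ have the same type in $G$. Pick cores $H$ and $K$ of $G$ relative to $\bar h, \bar k$. By Theorem \ref{EqualTypesImpliesIsomorphismOfCores}, there is an isomorphism $\phi: H \to K$ with $\phi(\bar h) = \bar k$. Using Remark \ref{TowerRetractsInFreeProducts}, the Grushko decomposition of $H$ relative to $\bar h$ is a free product of cores of the $G_i$'s (one of them relative to $\bar h$), and similarly for $K$; applying Proposition \ref{SubtowersUnderFirstTwoCond} inside each $G_i$, each such core is built from a floor-retracting set in $\Lambda_i$, so globally $H$ (respectively $K$) corresponds to a floor-retracting set of surface type vertices $V_0$ (respectively $W_0$) in $\Lambda$, and the non-free factors of $H, K$ are exactly the non-free components of $\Lambda - V_0$, $\Lambda - W_0$. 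Theorem \ref{IsoHypGroupsGivesIsoJSJ} applied to the restrictions of $\phi$ on each pair of matched factors yields an isomorphism of graphs of groups between the non-free components of $\Lambda - V_0$ and $\Lambda - W_0$; condition (3) then extends the restriction on the component $\Gamma_0$ containing $\bar h$ to an isomorphism of graphs of groups of $\Lambda$, which by Lemma \ref{IsoGOGGivesISoGroups} lifts to an automorphism of $G$. Since $\bar h$ already sits inside $\pi_1(\Gamma_0) \leq G_1$, this automorphism sends $\bar h$ to $\bar k$.

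\textbf{Main obstacle.} The most delicate point is matching up the two ``core structures'' of $G$ coming from $\bar h$ and $\bar k$ globally, namely showing that the abstract isomorphism $\phi: H \to K$ provided by Theorem \ref{EqualTypesImpliesIsomorphismOfCores} can be read as an isomorphism of (disconnected) graphs of groups between two subcollections of $\Lambda$ whose surface vertex complements floor-retract. One must in particular verify that the Grushko decomposition of $H$ relative to $\bar h$ obtained from Proposition \ref{SubtowersUnderFirstTwoCond} applied factor-by-factor genuinely corresponds to the connected components of $\Lambda - V_0$ (rather than to a further free refinement), so that condition (3) is applicable with the right set of components; this amounts to checking that under conditions (1) and (2), the relevant non-free components of $\Lambda - V_0$ remain freely indecomposable with JSJ precisely $\Gamma_i$, which is exactly the content of condition (2).
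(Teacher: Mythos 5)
Your proof is correct and follows essentially the same route as the paper. The paper packages the sufficiency-direction bookkeeping about cores into a standalone result (Proposition \ref{StructureOfCoresGeneral}), which it proves from Proposition \ref{SurfaceInFreeComponent} and Lemma \ref{RetractingSurfComeFromJSJ}; you instead reassemble that same statement on the fly from Remark \ref{TowerRetractsInFreeProducts} together with Proposition \ref{SubtowersUnderFirstTwoCond} applied factor-by-factor, which is a legitimate and equivalent reorganization. The necessity argument (conditions 1 and 2 descend to individual factors and trigger the freely indecomposable theorem via the corollary; condition 3 via the two floor structures and Proposition \ref{CriterionForNonHomogeneity}) matches the paper's argument in substance.

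One small point worth making explicit, which you gloss over in a parenthesis: when you invoke Remark \ref{TowerRetractsInFreeProducts} you implicitly use that the Grushko decomposition of $G$ relative to $\bar h$ coincides (up to conjugation) with the absolute Grushko decomposition $G_1 * \ldots * G_s * \F_t$; this holds because $G_1$ is already freely indecomposable absolutely, hence a fortiori relative to $\langle \bar h\rangle \leq G_1$, but it should be stated rather than assumed. Similarly, the extension you obtain from condition (3) is a priori a graph-of-groups automorphism of the disconnected $\Lambda$, hence an automorphism of $G_1 * \ldots * G_s$, and must then be extended over the free part $\F_t$ (trivially possible, but worth noting). Neither affects the validity of the argument.
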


Before proving this result, we give a description of the structure of cores when the first two conditions are satisfied.
\begin{prop} \label{StructureOfCoresGeneral}  Let $G$ be a torsion free hyperbolic group, with $G = G_1 * \ldots* G_s*\F_t$ where the factors $G_i$ are freely indecomposable and not cyclic. Denote by $\Lambda$ the collection of the maximal cyclic JSJ decompositions $\Lambda_1, \ldots, \Lambda_s$ of $G_1, \ldots, G_s$ respectively. Suppose that the first two conditions of Theorem \ref{MainGeneral} hold, and let $H$ be a core of $G$ relative to some tuple.

Then there exists a set $V$ of surface type vertices of $\Lambda$ which floor retracts in $\Lambda$ such that the corresponding floor decomposition is of the form $(G, H*F, r)$ for some free group $F$.
\end{prop}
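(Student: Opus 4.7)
The plan is to adapt the proof of Proposition \ref{SubtowersUnderFirstTwoCond} (the freely indecomposable analogue) to the freely decomposable setting, inducting on the length $m$ of the tower retract $G = G^0 > G^1 > \cdots > G^m = H * \F$ guaranteed by the core structure. The base case $m=0$ is immediate with $V = \emptyset$ and $G = H * \F$.

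For the inductive step, I would analyze the first floor $(G, G^1, r_1)$ with associated decomposition $\Gamma_1$. For each surface $\Sigma$ of $\Gamma_1$, Remark \ref{HypFloorSurfacesRetract} extracts a single-surface sub-floor structure for $G$ involving just $\Sigma$. Viewing this as a floor structure relative to the trivial tuple, Lemma \ref{SurfaceInFreeComponent} applied with the absolute Grushko decomposition $G = G_1 * \cdots * G_s * \F_t$ places $\Sigma$ (up to conjugation) inside some absolute Grushko factor $G_i$, equipped with its own single-surface floor structure. Lemma \ref{RetractingSurfComeFromJSJ}, applied to the freely indecomposable $G_i$, then identifies $\Sigma$ as a subsurface of a surface of $\Lambda_i$ that floor-retracts in $\Lambda_i$, and condition~1 (read off the element $\Lambda_i$ of the collection $\Lambda$) forces $\Sigma$ to be a full surface of $\Lambda_i$. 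Collecting the surfaces of $\Gamma_1$ yields a set $V^1$ of surface type vertices of $\Lambda$; Lemma \ref{SurfacesCanRetractTogether} applied within each $\Lambda_i$ shows that $V^1$ floor-retracts in the collection $\Lambda$. By condition~2, the non-free Grushko factors of $G^1$ have maximal cyclic JSJs precisely equal to the non-free components of $\Lambda - V^1$, yielding a subcollection $\Lambda^1 \subseteq \Lambda$.

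Since $H$ remains a core of $G^1$ relative to $h$ through the shorter tower of length $m-1$, one would like to apply the inductive hypothesis to $G^1$. For this I must verify that conditions~1 and 2 descend from $\Lambda$ to $\Lambda^1$: condition~2 is transparent, since floor-retracting sets in $\Lambda^1$ together with $V^1$ are floor-retracting sets in $\Lambda$, with matching components; condition~1 uses the subgraph-of-groups relationship between each $\Lambda^1_k$ and the ambient $\Lambda_i$, so that a proper subsurface floor-retracting in $\Lambda^1$ would yield a proper subsurface floor-retracting in $\Lambda$ (by extension of the retraction across the subgraph, in the same spirit as in the proof of Proposition \ref{SubtowersUnderFirstTwoCond}), contradicting condition~1. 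The induction then produces a floor-retracting set $V^2$ in $\Lambda^1$ realizing a single floor structure $(G^1, H * F', r')$. Setting $V = V^1 \cup V^2$ and invoking Lemma \ref{SurfacesCanRetractTogether} once more yields that $V$ floor-retracts in $\Lambda$, and the corresponding floor decomposition of $G$ gives the desired floor structure $(G, H * F, r)$ for the accumulated free group $F$.

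The main technical obstacle is the descent of conditions~1 and~2 from $\Lambda$ to $\Lambda^1$, and in particular the verification that a floor-retract in a subgraph of groups extends compatibly to the ambient graph; this will require combining the identification of $G^1$'s JSJs as subgraphs of $\Lambda$ (given by condition~2) with Lemma \ref{SurfacesCanRetractTogether} to rule out configurations that would violate condition~1 for $\Lambda$. Once this descent is established, the induction proceeds along the same lines as Proposition \ref{SubtowersUnderFirstTwoCond}.
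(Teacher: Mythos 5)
Your proof is essentially the paper's argument, with the same three-lemma backbone: Proposition \ref{SurfaceInFreeComponent} to localize the surface group in a freely indecomposable factor, Lemma \ref{RetractingSurfComeFromJSJ} plus condition~1 to upgrade the subsurface to a full surface of the JSJ, and condition~2 to identify the factors of the next tower level with components of $\Lambda$ minus the retracting vertex set, followed by iteration. The point at which you genuinely deviate from the paper's write-up, and arguably to your advantage, is the choice of Grushko decomposition fed into Proposition \ref{SurfaceInFreeComponent}: the paper works with the Grushko decomposition \emph{relative to $h$}, which has a distinguished factor $G_0$ containing $h$ that is only freely indecomposable relative to $h$ and need not be absolutely freely indecomposable; you instead work with the \emph{absolute} Grushko decomposition and treat the floor as a floor relative to the trivial tuple, which is legitimate (the relative-floor condition $H\leq H_i$ is vacuous for $H=1$) and has the virtue of placing the surface group directly into an absolutely freely indecomposable factor $G_i$, so that $\Lambda_i$ and Lemma \ref{RetractingSurfComeFromJSJ} apply without further ado. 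This sidesteps the need to track $h$ through the Grushko decomposition, at the cost of not keeping the relative decomposition visible for later use; since the proposition's conclusion is stated in terms of the absolute factors, your variant aligns more directly with what needs to be shown. The steps you flag as the main technical obstacles (descent of conditions 1 and 2 to the next tower level) are exactly the ones the paper compresses into "it is not hard to see," so you've identified the right pressure point; your sketch of that descent via the subgraph-of-groups structure and Lemma \ref{SurfacesCanRetractTogether} matches what the paper implicitly relies on.

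One small remark: once you have identified every surface of $\Gamma_1$ with a full surface of $\Lambda$ and used condition~2 to identify the non-surface vertex groups of $\Gamma_1$ with fundamental groups of the components of $\Lambda-V^1$, the graph of groups $\Gamma_1$ is already $\Lambda_{V^1}$, so the existing floor $(G,G^1,r_1)$ itself witnesses that $V^1$ floor-retracts; you don't actually need Lemma \ref{SurfacesCanRetractTogether} to establish that (though you do need it to reassemble $V^1\cup V^2$ at the end, as the paper does in Proposition \ref{SubtowersUnderFirstTwoCond}).
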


\begin{proof} Let $h \in G$, without loss of generality we can assume that the Grushko decomposition for $G$ relative to $h$ is of the form  $G=G_0* G_1 \ldots*G_t*\F$ for some $t\leq s$ (up to renumbering and conjugation of the factors), with $h \in G_0$. Let $(G, G', r)$ be a floor structure relative to $h$, and assume the corresponding floor decomposition has a single surface type vertex with vertex group $S$. By Proposition \ref{SurfaceInFreeComponent}, up to conjugation there is an index $i$ for which $S \leq G_i$ and there exists a floor structure $(G_i, G'_i, r_i)$ such that $r$ restricts to $r_i$ on $G_i$ and to the identity on $\F$ and $G_j$ for $j \neq i$. 
	
The first condition on $\Lambda$ implies that no proper subsurface of $G_i$ floor-retracts in $\Lambda_i$, so by Lemma \ref{RetractingSurfComeFromJSJ} we get that $\Sigma$ is a (full) surface of $\Lambda_i$ which floor-retracts in $\Lambda_i$, and the floor structure corresponding to $(G_i, G'_i, r_i)$ is exactly $\Lambda_{\Sigma}$. Now it is not hard to see that the conditions 1. and 2. also hold for $G'$ and the collection $\Lambda'$ of maximal cyclic JSJ decompositions of the non free factors of the Grushko decompositions of $G'$ (these decompositions are by condition 2. precisely the connected components of $\Lambda - v_{\Sigma}$ whose fundamental group is not free). 

By repeating this arguments for the next floor $(G', G'', r')$ of the structure of core of $H$ in $G$, we get the result.
\end{proof}

We now prove Theorem \ref{MainGeneral}.
 \begin{proof} It is not hard to see that the first two conditions are necessary: indeed if they fail, one of the factors $G_i$ is not homogeneous. To see this, apply Theorem \ref{SubsurfaceRetractNonHomogeneous} and Theorem \ref{GrowingJSJNonHomogeneous} respectively to the factor $G_i$ whose JSJ decomposition $\Lambda_i$ contains the surface $\Sigma$ for the first condition, or the connected component $\Gamma$ of $\Lambda - V_0$ whose JSJ refines for the second.

	If the third condition fails, let $V_0, W_0$ and $\Phi$ witness the failure. We get from $V_0$ and $W_0$ two floor structures $(G, U, r)$ and $(G, U', r')$ for $G$, with decompositions $U=U_0 * \ldots *U_m$ and $U=U'_0*\ldots*U'_m$ induced by the floor decompositions. The existence of $\Phi$ implies that there exists an isomorphism $\phi: U \to U'$ which we may assume sends $U_0$ isomorphically onto $U'_0$. The fact that $\Phi$ witnesses the failure of the third condition means that if $\Gamma_0$ denotes the connected component of $\Lambda-V_0$ whose fundamental group is $U_0$, then $\Phi \mid_{\Gamma_0}$ does not extend to an isomorphism of graphs of groups of $\Lambda$. This implies that $\phi\mid_{U_0}$ does not extend to an isomorphism of $G$. By Proposition \ref{CriterionForNonHomogeneity} the group $G$ is not homogeneous.

To see that these conditions are sufficient, suppose they hold and pick $u,u'$ tuples in $G$ with $\tp^G(u)=\tp^G(u')$. By Proposition \ref{StructureOfCoresGeneral}, if $U,U'$ denote the cores of $G$ relative to $u,u'$ respectively, there exist sets $V_0, W_0$ of surface type vertices of $\Lambda$  which floor-retract with corresponding floor structures $(G, U*F, r)$ and $(G, U'*F', r')$. Then $U= U_0 * \ldots * U_l$ and $U'=U'_0* \ldots*U'_m$ where the maximal cyclic JSJ decompositions of the factors $U_i, U'_i$ are by condition 2. precisely the connected components of $\Lambda - V_0$ and $\Lambda - W_0$ whose fundamental groups are not free relative to $u, u'$. By Theorem \ref{EqualTypesImpliesIsomorphismOfCores}, there is an isomorphism $f: U \to U'$ sending $u$ to $u'$. This implies that there is a graph of groups isomorphism $\Phi$ between the collection of connected components of $\Lambda - V_0$ and $\Lambda - W_0$ whose fundamental groups are not free relative to $u,u'$. By the third condition, $\Phi \mid_{\Gamma_0}$ - where $\Gamma_0$ is the connected component of $\Lambda -V_0$ whose fundamental group contains $u$ - extends to an isomorphism of graph of groups of $\Lambda$. This implies that $f\mid_{U_0}$ extends to an automorphism of $G$. 
 \end{proof}

\providecommand{\bysame}{\leavevmode\hbox to3em{\hrulefill}\thinspace}
\providecommand{\MR}{\relax\ifhmode\unskip\space\fi MR }
\providecommand{\MRhref}[2]{%
	\href{http://www.ams.org/mathscinet-getitem?mr=#1}{#2}
}
\providecommand{\href}[2]{#2}

\end{document}